\newtheorem{theorem}{Theorem}[subsection]
\newtheorem{corollary}[theorem]{Corollary}
\newtheorem{definition}[theorem]{Definition}
\newtheorem{example}[theorem]{Example}
\newtheorem{lemma}[theorem]{Lemma}
\newtheorem{proposition}[theorem]{Proposition}
\newenvironment{proof}[1][Proof]{\textbf{#1.} }{\ \rule{0.5em}{0.5em}}
\begin{document}

\title{Crystal isomorphisms for irreducible highest weight $\mathcal{U}_{v}(%
\widehat{\mathfrak{sl}}_{e})$-modules of higher level}
\author{Nicolas Jacon\footnote{Universit\'e de Franche-Comt\'e,  UFR Sciences et Techniques, 16 route de
Gray, 25 030 Besan\c{c}on, France. Email: njacon@univ-fcomte.fr} and C\'edric
Lecouvey\footnote{ Universit\'e du Littoral, Centre Universitaire de la Mi-Voix
Maison de la Recherche Blaise Pascal, 50 rue F.Buisson B.P. 699 62228 Calais
Cedex, France. Email: Cedric.Lecouvey@lmpa.univ-littoral.fr}  }
\date{}
\maketitle

\begin{abstract}
We study the crystal graphs of irreducible $\mathcal{U}_{v}(\widehat{\mathfrak{sl%
}}_{e})$-modules of higher level $\ell $. Generalizing results of the first
author, we obtain a simple description of the bijections between the classes
of multipartitions which naturally label these graphs: the Uglov
multipartitions. By works of Ariki, Grojnowski and
Lascoux-Leclerc-Thibon, it is then known that these bijections permit also to link the distinct parametrizations of the simple modules in modular representation theory of
Ariki-Koike algebras. Our main tool is to make explicit an embedding of the $%
\mathcal{U}_{v}(\widehat{\mathfrak{sl}}_{e})$-crystals of level $\ell $ into $%
\mathcal{U}_{v}(\mathfrak{sl}_{\infty })$-crystals associated to highest weight
modules.
\end{abstract}

\section{Introduction}

Let $\mathcal{U}_{v}(\widehat{\mathfrak{sl}}_{e})$ be the affine quantum
group of type $A_{e-1}^{(1)}$ and $\{\Lambda _{i}\ |\ i\in \mathbb{Z}/e%
\mathbb{Z}\}$ its set of fundamental weights. Consider $\ell \in \mathbb{N}$
and $\underline{\frak{s}}=(\frak{s}_{0},....,\frak{s}_{\ell -1})\in (\mathbb{%
Z}/e\mathbb{Z)}^{\ell }$.\ Denote by $V_{e}(\Lambda _{\underline{\frak{s}}})$
the irreducible $\mathcal{U}_{v}(\widehat{\mathfrak{sl}}_{e})$-module of
highest weight $\displaystyle\Lambda _{\underline{\frak{s}}%
}=\sum_{i=0}^{\ell -1}\Lambda _{\frak{s}_{i}}$.\ The general theory of
Kashiwara provides a crystal basis and a global basis for $V_{e}(\Lambda _{%
\underline{\frak{s}}})$.\ The crystal basis of $V_{e}(\Lambda _{\underline{%
\frak{s}}})$ comes equipped with the crystal graph $B_{e}(\Lambda _{%
\underline{\frak{s}}})$ which encodes much information on the module
structure.

\noindent There are different possible realizations of $V_{e}(\Lambda _{%
\underline{\frak{s}}})$ depending on the choice of a representative $%
\underline{s}=(s_{0},...,s_{\ell -1})\in \mathbb{Z}^{\ell }$ of the class $%
\underline{\frak{s}}\in (\mathbb{Z}/e\mathbb{Z)}^{\ell }$.\ Indeed, to $%
\underline{s}$ is associated a Fock space $\frak{F}_{e}^{\underline{s}}$
which provides an explicit construction of $V_{e}(\Lambda _{\underline{\frak{%
s}}})$. We will denote it $V_{e}^{\underline{s}}(\Lambda _{\underline{\frak{s%
}}})$ and write $B_{e}^{\underline{s}}(\Lambda _{\underline{\frak{s}}})$ for
the corresponding crystal graph. The crystals $B_{e}^{\underline{s}}(\Lambda
_{\underline{\frak{s}}})$ with $\underline{s}\in \underline{\frak{s}}$ are
thus all isomorphic to the abstract crystal $B_{e}(\Lambda _{\underline{%
\frak{s}}})$.

The purpose of the paper is to make explicit the isomorphisms between the
crystals $B_{e}^{\underline{s}}(\Lambda _{\underline{\frak{s}}})$ when $%
\underline{s}$ runs over $\underline{\frak{s}}$. By the works of Ariki \cite{ari2}, Grojnowski \cite{Gro} and
Lascoux-Leclerc-Thibon \cite{LLT}, an important application of these isomorphisms is to provide bijections between the distinct parametrizations of the simple modules in modular representation theory of
Ariki-Koike algebras.

\noindent To be more precise, let $\eta $ be a primitive $e^{\text{th}}$%
-root of unity. Write $\mathcal{H=H}(\eta ;\eta ^{\frak{s}_{0}},...,\eta ^{%
\frak{s}_{\ell -1}})$ for the Ariki-Koike algebra defined over an
algebraically closed field $F$ of characteristic $0$. This algebra is
generated by $T_{0},\cdots ,T_{n-1}$ subject to the relations $(T_{0}-\eta ^{%
\frak{s}_{0}})...(T_{0}-\eta ^{\frak{s}_{\ell -1}})=0$, $(T_{i}-\eta
)(T_{i}+1)=0$, for $1\leq i\leq n$ and the type $B$ braid relations 
\begin{gather*}
(T_{0}T_{1})^{2}=(T_{1}T_{0})^{2},\quad T_{i}T_{i+1}T_{i}=T_{i+1}T_{i}T_{i+1}%
\text{ }(1\leq i<n), \\
T_{i}T_{j}=T_{j}T_{i}\text{ }(j\geq i+2).
\end{gather*}
The algebra $\mathcal{H}$ is not semisimple in general, and, by a deep
Theorem of Ariki, its representation theory is intimately connected to the
global bases of the irreducible $\mathcal{U}_{v}(\widehat{\mathfrak{sl}}_{e})
$-modules.\ In particular, the simple modules of $\mathcal{H}$ are labelled
by the vertices of any crystal $B_{e}^{\underline{s}}(\Lambda _{\underline{%
\frak{s}}})$ such that $\underline{s}\in \underline{\frak{s}}$.\ In fact the
Fock space $\frak{F}_{e}^{\underline{s}}$ admits a crystal basis indexed by
multipartitions of length $\ell .\;$This implies that the vertices of the
crystal $B_{e}^{\underline{s}}(\Lambda _{\underline{\frak{s}}})$ can be
identified with certain multipartitions of length $\ell $ which are called
the Uglov multipartitions. Note that when $\ell =2$, remarkable results of
Geck \cite{geck0} show that these multipartitions also naturally appear in
the context of Kazhdan-Lusztig theory and cellular structure of Hecke
algebras. Other connections between Uglov multipartitions and modular
representation theory of Ariki-Koike algebras are also known to hold for $%
\ell >2$ \cite{jac0}.

\noindent Although nice properties have been given in particular cases (see 
\cite{AKT} and \cite{AJ}), the combinatorics of the Uglov multipartitions
and their associated crystal graphs are not really well understood. For
example, we do not even have a non recursive characterization of the Uglov
multipartitions for any $\underline{s}\in \underline{\frak{s}}$.

This paper extends the results obtained in \cite{jac} for $\ell =2$ by
giving a combinatorial description of the isomorphisms between the crystals $%
B_{e}^{\underline{s}}(\Lambda _{\underline{\frak{s}}}).\;$Nevertheless, the
ideas we use are quite different.\ Indeed, we show that the crystals $B_{e}^{%
\underline{s}}(\Lambda _{\underline{\frak{s}}})$ can be embedded in crystals
corresponding to irreducible highest weight $\mathcal{U}_{v}(\mathfrak{sl}%
_{\infty })$-modules. This result allows us to prove that most of the
crystal isomorphisms in type $A_{e-1}^{(1)}$ can be derived from crystal
isomorphisms in type $A_{\infty }$. Now the combinatorial description of the
isomorphisms of $A_{\infty }$-crystals is very close to that of the
isomorphisms of $A_{r}$-crystals in finite rank $r.\;$This permits us to use
some elegant results of Nakayashiki and Yamada \cite{ny} on combinatorial $R$%
-matrices in type $A_{r}.$ One of the advantages of this new method is to
avoid cumbersome case by case verifications unavoidable in \cite{jac0}.

\noindent We would like to mention also that there is another way to realize
the abstract crystals $B_{e}(\Lambda _{\underline{\frak{s}}})$ by using Fock
spaces of level $\ell $ which are tensor products of Fock spaces of level $%
1.\;$The crystal $B_{e}^{a}(\Lambda _{\underline{\frak{s}}})$ so obtained
notably appears in the works by Ariki (see \cite{ari2}).\ The vertices of $%
B_{e}^{a}(\Lambda _{\underline{\frak{s}}})$ are parametrized by
multipartitions called the ``Kleshchev multipartitions'' \cite{Kle}.\ The
crystals $B_{e}^{\underline{s}}(\Lambda _{\underline{\frak{s}}})$ and $%
B_{e}^{a}(\Lambda _{\underline{\frak{s}}})$ do not coincide in general.\ In
particular, our method does not permit one to embed $B_{e}^{a}(\Lambda _{%
\underline{\frak{s}}})$ in a crystal of type $A_{\infty }$ (but see the
remark after Theorem \ref{th_embedd}).

\bigskip

The present paper is organized as follows. The second section is devoted to
the combinatorial description of certain isomorphisms of $\mathcal{U}_{v}(%
\frak{sl}_{\infty })$-crystals.\ In section $3$, we recall basic results on $%
\mathcal{U}_{v}(\widehat{\frak{sl}}_{e})$-crystals.\ By using two natural
parametrizations of the Dynkin diagram in type $A_{e-1}^{(1)}$, we link in
particular the two usual presentations of the crystals $B_{e}^{\underline{s}%
}(\Lambda _{\underline{\frak{s}}})$ which appear in the literature. We then
show in Section $4$ that the crystals $B_{e}^{\underline{s}}(\Lambda _{%
\underline{\frak{s}}})$ can be embedded in crystals corresponding to
irreducible highest weight $\mathcal{U}_{v}({\mathfrak{sl}}_{\infty })$%
-modules. This embedding allows us to give in Section $5$ a description of
the isomorphisms between the crystals $B_{e}^{\underline{s}}(\Lambda _{%
\underline{\frak{s}}})$ and to obtain another characterization for the sets
of Uglov multipartitions. This characterization does not necessitate an
induction on the sum of the parts of the Uglov multipartitions contrary to
the original one \cite{uglov}.

\section{Crystals in type $A_{\infty }$}

In this section, we study crystal isomorphisms in type $A_{\infty }$.

\subsection{Background on $\mathcal{U}_{v}(\frak{sl}_{\infty})$}

Let $\frak{sl}_{\infty }$ be the Lie algebra associated to the doubly
infinite Dynkin diagram in type $A_{\infty }$ (see \cite{Kac} and \cite{KAC2}%
). 
\begin{equation}
A_{\infty }:\cdot \cdot \cdot -\overset{-m}{\circ }-\overset{1-m}{\circ }%
-\cdot \cdot \cdot -\overset{-1}{\circ }-\overset{0}{\circ }-\overset{1}{%
\circ }-\cdot \cdot \cdot -\overset{m-1}{\circ }-\overset{m}{\circ }-\cdot
\cdot \cdot .  \label{Ainf}
\end{equation}
We denote by $\mathcal{U}_{v}(\frak{sl}_{\infty })$ the corresponding
quantum group and we write $\omega _{i},i\in \mathbb{Z}$ for the fundamental
weights of the corresponding root system. We associate to the sequence $%
\underline{s}=(s_{0},...,s_{\ell -1})\in \mathbb{Z}^{\ell }$ the dominant
weight $\omega _{\underline{s}}=\sum_{i=0}^{\ell -1}\omega _{s_{i}}.$ Then
the irreducible highest weight $\mathcal{U}_{v}(\frak{sl}_{\infty })$%
-modules are parametrized by the sequences $\underline{s}$ of arbitrary
length $\ell $. We denote by $V_{\infty }(\omega _{\underline{s}})$ the
irreducible $\mathcal{U}_{v}(\frak{sl}_{\infty })$-module of highest weight $%
\omega _{\underline{s}}.$ The module $V_{\infty }(\omega _{\underline{s}})$
admits a crystal basis.\ We refer the reader to \cite{Kan} for a complete
review on crystal bases.\ We write $B(\omega _{\underline{s}})$ for the
crystal graph corresponding to $V_{\infty }(\omega _{\underline{s}}).$ When $%
\underline{s}=(s_{0})$ we write for short $V_{\infty }(\omega _{s_{0}})$ and 
$B_{\infty }(\omega _{s_{0}})$ instead of $V(\omega _{\underline{s}})$ and $%
B(\omega _{\underline{s}})$.

In addition to the irreducible highest weight modules $V_{\infty }(\omega _{%
\underline{s}})$, it will be convenient to consider also irreducible modules 
$V_{\infty }(k)$ indexed by nonnegative integers which are not of highest
weight. By a column of height $k,$ we mean a column shaped Young diagram 
\begin{equation}
C= 
\begin{tabular}{|c|}
\hline
$x_{1}$ \\ \hline
$\cdot $ \\ \hline
$\cdot $ \\ \hline
$x_{k}$ \\ \hline
\end{tabular}
\label{colk}
\end{equation}
of height $k$ filled by integers $x_{i}\in \mathbb{Z}$ such that $%
x_{1}>\cdot \cdot \cdot >x_{k}.\;$When $a\in C$ and $b\notin C$, we write $%
C-\{a\}+\{b\}$ for the column obtained by replacing in $C$ the letter $a$ by
the letter $b.$ The module $V_{\infty }(k)$ is defined as the vector space
with basis $\mathcal{B}_{k}=\{v_{C}\mid C$ is a column of height $k$\}.\ The
actions of the Chevalley generators $e_{i},f_{i},k_{i}$, $i\in \mathbb{Z}$
of $\mathcal{U}_{v}(\frak{sl}_{\infty })$ are given by 
\begin{align*}
f_{i}(v_{C})& =\left\{ 
\begin{array}{l}
0\text{ if }\delta _{i}(C)=0\text{ or }\delta _{i+1}(C)=1 \\ 
v_{C^{\prime }}\text{ with }C^{\prime }=C\setminus \{i\}\cup \{i+1\}\text{
otherwise}
\end{array}
\right. , \\
e_{i}(v_{C})& =\left\{ 
\begin{array}{l}
0\text{ if }\delta _{i}(C)=1\text{ or }\delta _{i+1}(C)=0 \\ 
v_{C^{\prime }}\text{ with }C^{\prime }=C\setminus \{i+1\}\cup \{i\}\text{
otherwise}
\end{array}
\right. , \\
k_{i}(v_{C})& =v^{\delta _{i}(C)-\delta _{i+1}(C)}v_{C}.
\end{align*}
where for any $i\in \mathbb{Z}$, $\delta _{i}(C)=1$ if $i\in C$ and $\delta
_{i}(C)=0$ otherwise.

\bigskip

\noindent \textbf{Remark: }Consider $a,b$ two integers such that $a<b.\;$%
Denote by $[a,b[$ the set of integers $i$ such that $a\leq i\leq b-1$.\
Write $\mathcal{U}_{v}(\frak{sl}_{a,b})$ for the subalgebra of $\mathcal{U}%
_{v}(\frak{sl}_{\infty })$ generated by the Chevalley generators $%
e_{i},f_{i},k_{i}$ with $i\in \lbrack a,b[.\;$Then $\mathcal{U}_{v}(\frak{sl}%
_{a,b})$ can be identified with the quantum group associated to the Dynkin
diagram obtained by deleting the nodes $i\notin \lbrack a,b[$ in (\ref{Ainf}%
).\ In particular $\mathcal{U}_{v}(\frak{sl}_{a,b})$ is isomorphic to the
quantum group $\mathcal{U}_{v}(\frak{sl}_{d})$ with $d=b-a+1.\;$The subspace 
$V_{a,b}(k)$ of $V_{\infty }(k)$ generated by the basis vectors $v_{C}$ such
that $C$ contains only letters $x_{i}$ with $a\leq x_{i}\leq b$ has the
structure of a $\mathcal{U}_{v}(\frak{sl}_{a,b})$-module.\ Moreover $%
V_{a,b}(k)$ is isomorphic to the $k^{\text{th}}$ fundamental module of $%
\mathcal{U}_{v}(\frak{sl}_{a,b})$.

\bigskip

We follow the convention of \cite{kash} and consider $\mathcal{U}_{v}(\frak{%
sl}_{\infty })$ as a Hopf algebra with coproduct given by 
\begin{gather*}
\Delta (k_{i})=k_{i}\otimes k_{i}, \\
\Delta (e_{i})=e_{i}\otimes k_{i}+1\otimes e_{i}\text{ and }\Delta
(f_{i})=f_{i}\otimes 1+k_{i}^{-1}\otimes f_{i}.
\end{gather*}
Given $M_{1}$ and $M_{2}$ two $\mathcal{U}_{v}(\frak{sl}_{\infty })$-modules
with crystal graphs $B_{1}$ and $B_{2}$, the crystal graph structure on $%
B_{1}\otimes B_{2}$ is then given by 
\begin{align}
\widetilde{f_{i}}(u\otimes v)& =\left\{ 
\begin{tabular}{c}
$\widetilde{f}_{i}(u)\otimes v$ if $\varphi _{i}(v)\leq \varepsilon _{i}(u)$
\\ 
$u\otimes \widetilde{f}_{i}(v)$ if $\varphi _{i}(v)>\varepsilon _{i}(u)$%
\end{tabular}
\right. ,  \label{TENS1} \\
\widetilde{e_{i}}(u\otimes v)& =\left\{ 
\begin{tabular}{c}
$u\otimes \widetilde{e_{i}}(v)$ if $\varphi _{i}(v)\geq \varepsilon _{i}(u)$
\\ 
$\widetilde{e_{i}}(u)\otimes v$ if$\varphi _{i}(v)<\varepsilon _{i}(u)$%
\end{tabular}
\right. .  \label{TENS2}
\end{align}
Note that this convention is the reverse of that used in many references on
crystals bases (see for example \cite{HK}) but it is the natural one for
working with multipartitions.

\subsection{The crystals graphs of irreducible $\mathcal{U}_{v}(\frak{sl}%
_{\infty})$-modules\label{subsecRA}}

For any $s\in \mathbb{Z}$, the $\mathcal{U}_{v}(\frak{sl}_{\infty })$-module 
$V_{\infty }(\omega _{s})$ can be obtained as an irreducible component of
the Fock space $F_{\infty }(\omega _{s})$ defined by considering
semi-infinite wedge products of $V_{\infty }(1).$ The crystal $B_{\infty
}(\omega _{s})$ is identified with the graph whose vertices are the infinite
columns 
\begin{equation*}
\mathcal{C}= 
\begin{tabular}{|c|}
\hline
$x_{1}$ \\ \hline
$\cdot $ \\ \hline
$\cdot $ \\ \hline
$x_{k}$ \\ \hline
$s-k+1$ \\ \hline
$s-k$ \\ \hline
$\cdot \cdot \cdot $ \\ \hline
\end{tabular}
\end{equation*}
that is, the infinite columns shaped Young diagrams filled by decreasing
integers $x_{i}$ from top to bottom and such that $x_{k}=s-k+1$ for $k$
sufficiently large. Given $\mathcal{C}_{1}$ and $\mathcal{C}_{2}$ in $%
B_{\infty }(\omega _{s}),$ we have an arrow $\mathcal{C}_{1}\overset{i}{%
\rightarrow }\mathcal{C}_{2}$ if and only if $i\in \mathcal{C}_{1},$ $%
i+1\notin \mathcal{C}_{2}$ and $\mathcal{C}_{2}=\mathcal{C}%
_{1}-\{i\}+\{i+1\}.$ The highest weight vertex of $B_{\infty }(\omega _{s})$
is the column $\mathcal{C}^{(s)}$ such that $x_{k}=s-k+1$ for all $k\geq 1.$

\noindent We associate to the $\mathcal{U}_{v}(\frak{sl}_{\infty })$-module $%
V_{\infty }(k)$ a graph $B_{\infty }(k)$.\ Its vertices are the columns of
height $k$ (see (\ref{colk})) and we draw an oriented arrow $C_{1}\overset{i%
}{\rightarrow }C_{2}$ if and only if $i\in C_{1},$ $i+1\notin C_{2}$ and $%
C_{2}=C_{1}-\{i\}+\{i+1\}.$ The graph $B_{\infty }(k)$ can be regarded as
the crystal graph of $V_{\infty }(k)$.\ In fact, one can define a notion of
crystal basis for the module $V_{\infty }(k)$ despite the fact it is not of
highest weight. This can be done essentially by considering the direct limit
of the directed system formed by the crystal bases of the $\mathcal{U}_{v}(%
\frak{sl}_{a,b})$-modules $V_{a,b}(k)$ \cite{lec}. The corresponding crystal
graph then coincide with $B_{\infty }(k).$ In the sequel we will only need
the crystal $B_{\infty }(k)$ and not the whole crystal basis of $B_{\infty
}(k)$.

\noindent Consider an infinite column $\mathcal{C}\in B_{\infty }(\omega
_{s})$ with letters $x_{i}$, $i\geq 1$. We write $\pi _{a}(\mathcal{C)}$ for
the finite column obtained by deleting the infinite sequence of letters $%
x_{i}$ such that $x_{i}<a$ in $\mathcal{C}$. Note that the height of $\pi
_{a}(\mathcal{C)}$ depends then of the integer $a$ chosen.

\noindent Let $\underline{s}=(s_{0},...,s_{\ell -1})\in \mathbb{Z}^{\ell }$.
The abstract crystal $B_{\infty }(\omega _{\underline{s}})$ is then
isomorphic to the connected component $B_{\infty }^{_{\underline{s}}}(\omega
_{\underline{s}})$ of $B_{\infty }(\underline{s})=B_{\infty }(\omega
_{s_{0}})\otimes \cdot \cdot \cdot \otimes B_{\infty }(\omega _{s_{\ell -1}})
$ with highest weight vertex $b^{(\underline{s})}=\mathcal{C}%
^{(s_{0})}\otimes \cdot \cdot \cdot \otimes \mathcal{C}^{(s_{\ell -1})}$.
Consider $b=\mathcal{C}_{0}\otimes \cdot \cdot \cdot \otimes \mathcal{C}%
_{\ell -1}\in B_{\infty }^{_{\underline{s}}}(\omega _{\underline{s}})$ with $%
\mathcal{C}_{k}\in B_{\infty }(\omega _{s_{k}}),$ $k=0,...,\ell -1$. For any
integer $a$ and any $k=0,...,\ell -1,$ let $h_{k}$ be the height of the
finite column $\pi _{a}(\mathcal{C}_{k})$.$\;$We set 
\begin{equation*}
\pi _{a}(b)=\pi _{a}(\mathcal{C}_{0})\otimes \cdot \cdot \cdot \otimes \pi
_{a}(\mathcal{C}_{\ell -1})\in B_{\infty }(h_{0})\otimes \cdot \cdot \cdot
\otimes B_{\infty }(h_{\ell -1}).
\end{equation*}
Consider $b_{1}$ and $b_{2}$ two vertices of $B_{\infty }^{_{\underline{s}%
}}(\omega _{\underline{s}})$.\ Let $\widetilde{K}$ be any path between $b_{1}
$ and $b_{2}$ in $B_{\infty }^{_{\underline{s}}}(\omega _{\underline{s}}),$
that is any sequence of crystal operators such that $b_{1}=\widetilde{K}%
(b_{2}).$

\begin{lemma}
\label{lem_pia}With the previous notation, for any integer $a\geq 1$
sufficiently large, we have $\pi _{a}(b_{1})\in B_{\infty }(h_{0})\otimes
\cdot \cdot \cdot \otimes B_{\infty }(h_{\ell -1})$ and $\pi _{a}(b_{2})\in
B_{\infty }(h_{0})\otimes \cdot \cdot \cdot \otimes B_{\infty }(h_{\ell -1})$%
. In this case $\pi _{a}(b_{1})=\widetilde{K}(\pi _{a}(b_{2}))$ in $%
B_{\infty }(h_{0})\otimes \cdot \cdot \cdot \otimes B_{\infty }(h_{\ell -1})$%
.
\end{lemma}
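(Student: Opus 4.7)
The plan is to reduce the claim to a compatibility check for each atomic Kashiwara operator appearing in $\widetilde{K}$. Since $\widetilde{K}$ is a finite composition of operators $\widetilde{X}_{i_{1}},\ldots,\widetilde{X}_{i_{N}}$ with each $\widetilde{X}_{i_{j}}\in\{\widetilde{e}_{i_{j}},\widetilde{f}_{i_{j}}\}$, only finitely many indices $I=\{i_{1},\ldots,i_{N}\}\subset\mathbb{Z}$ appear. Moreover, only finitely many intermediate vertices $b_{2}=c^{(0)},c^{(1)},\ldots,c^{(N)}=b_{1}$ arise along the path, and each tensor factor $c_{k}^{(j)}$ agrees with the highest weight column $\mathcal{C}^{(s_{k})}$ from some position onward. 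Thus I can fix a single $M$ past which all tensor factors of all intermediate vertices are ``canonical'', and then take the cutoff $a$ to satisfy simultaneously $a\le\min(I)$ and $a\le s_{k}-M+1$ for every $k$ (this is the sense in which $a$ must be ``sufficiently large'' in absolute value in the lemma).

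For such an $a$, the first step is height stability. The truncation $\pi_{a}(c_{k}^{(j)})$ contains the (at most $M-1$) non-canonical letters at the top of $c_{k}^{(j)}$ together with the canonical tail letters $s_{k}-M+1,s_{k}-M,\ldots,a$ down to the cutoff; hence it has height $s_{k}-a+1$, independent of $j$. In particular $\pi_{a}(b_{1})$ and $\pi_{a}(b_{2})$ lie in the same tensor product $B_{\infty}(h_{0})\otimes\cdots\otimes B_{\infty}(h_{\ell-1})$ with identical heights.

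The second step is to show that each atomic operator $\widetilde{X}_{i_{j}}$ commutes with $\pi_{a}$ at the tensor product level. Its action swaps $i_{j}\leftrightarrow i_{j}+1$ inside a single column; since $a\le i_{j}$, both letters lie above the cutoff, so this swap preserves $\pi_{a}$ and the heights. The tensor product rules (\ref{TENS1}) and (\ref{TENS2}) select the factor acted upon from the statistics $\varepsilon_{i_{j}}$ and $\varphi_{i_{j}}$, which for a single column only detect whether $i_{j}$ and $i_{j}+1$ appear; these statistics therefore agree for $c_{k}^{(j-1)}$ and $\pi_{a}(c_{k}^{(j-1)})$, so the same factor is selected in the full and truncated pictures. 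Hence $\pi_{a}(\widetilde{X}_{i_{j}}(c^{(j-1)}))=\widetilde{X}_{i_{j}}(\pi_{a}(c^{(j-1)}))$, and a straightforward induction on $j$ yields $\pi_{a}(b_{1})=\widetilde{K}(\pi_{a}(b_{2}))$.

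The only subtle point I foresee is purely bookkeeping: one needs a single cutoff $a$ working simultaneously for every intermediate vertex and every index used by $\widetilde{K}$. Once this is in place via the finiteness of $\widetilde{K}$, the commutation of each Kashiwara operator with $\pi_{a}$ is immediate from the combinatorial description of $\widetilde{e}_{i}$ and $\widetilde{f}_{i}$ recalled above.
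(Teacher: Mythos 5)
Your proof is correct and follows essentially the same route as the paper: choose the cutoff $a$ low enough (the paper's ``sufficiently large'' must indeed be read as $a\ll 0$, as in its later remark) so that all truncated columns along the path have the constant heights $h_k=s_k+1-a$, and then observe that the deleted letters, all $<a$, never enter the signature computations, so $\pi_a$ commutes with every crystal operator in $\widetilde{K}$. Your version merely makes explicit the choice $a\le\min(I)$ and $a\le s_k-M+1$ and the step-by-step commutation that the paper summarizes in one sentence.
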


\begin{proof}
We can choose $a$ sufficiently large so that the infinite columns appearing
in every vertex $b$ of the path joining $b_{1}$ to $b_{2}$ contain all the
letters $x<a$.\ Then, for each $k=0,...,\ell -1$, the height of the column
obtained by deleting the letters $x<a$ in the $k$-th column of $b$ does not
depend on $b.$ Set $h_{k}=s_{k}+1-a.$ The deleted letters $x<a$ do not
interfere during the computation of the crystal operators defining the path
between $b_{1}$ and $b_{2}.$ This implies that the path joining $\pi
_{a}(b_{1})$ to $\pi _{a}(b_{2})$ obtained by applying the same sequence $%
\widetilde{K}$ of crystal operators also exists in $B_{\infty
}(h_{0})\otimes \cdot \cdot \cdot \otimes B_{\infty }(h_{\ell -1}).$ Thus $%
\pi _{a}(b_{1})=\widetilde{K}(\pi _{a}(b_{2})).$
\end{proof}

\bigskip

The vertices of $B_{\infty }(\omega _{s})$ are also naturally labelled by
partitions. Recall that a partition $\lambda =\left( \lambda
_{1},...,\lambda _{p}\right) $ of length $p$ is a weakly decreasing sequence 
$\lambda _{1}\geq \cdot \cdot \cdot \geq \lambda _{p}$ of nonnegative
integers called the parts of $\lambda $. In the sequel we identify the
partitions having the same nonzero parts and denote by $\mathcal{P}$ the set
of all partitions. It is convenient to represent $\lambda $ by its Young
diagram.\ In the sequel we use the French convention for the Young diagram
(see example below). To any pair $(a,b)$ of integers, we associate the node $%
\gamma =(a,b)$ which is the box obtained after $a$ north moves and $b$ east
moves starting from the south-west box of $\lambda .$ Observe that the node $%
\gamma $ does not necessarily belong to $\lambda .\;$We say that $\gamma $
is removable when $\gamma =(a,b)\in \lambda $ and $\lambda \backslash
\{\gamma \}$ is a partition$.$ Similarly $\gamma $ is said addable when $%
\gamma =(a,b)\notin \lambda $ and $\lambda \cup \{\gamma \}$ is a partition.

\noindent We associate to the infinite column $\mathcal{C\in }B_{\infty
}(\omega _{s})$ with letters $x_{k}$ the partition $\lambda $ such that $%
\lambda _{k}=x_{k}-s+k-1.$ Since $\lambda _{k}=0$ for $k$ sufficiently
large, this permits us to label the vertices of $B_{\infty }(\omega _{s})$
by $\mathcal{P}$. Let $\gamma =(a,b)$ be a node for $\lambda \in B_{\infty
}(\omega _{s}).\;$The content of $\gamma $ is 
\begin{equation}
c(\gamma )=b-a+s.  \label{contain}
\end{equation}
In $B_{\infty }(\omega _{s})$, we have an arrow $\lambda \overset{i}{%
\rightarrow }\mu $ if and only if $\mu /\lambda =\gamma $ where $\gamma $ is
addable in $\lambda $ with $c(\gamma )=i$. For any partitions $\lambda
=(\lambda _{1},...,\lambda _{p})$ we set $\left| \lambda \right| =\lambda
_{1}+\cdot \cdot \cdot +\lambda _{p}.\;$Then $\left| \lambda \right| $ is
equal to the length of the directed path joining $\emptyset $ to $\lambda $
in the crystal $B_{\infty }(\omega _{s})$.

\begin{example}
\label{exam1} 
\begin{align*}
\text{To }\mathcal{C}& = 
\begin{tabular}{|c|}
\hline
$\mathtt{7}$ \\ \hline
$\mathtt{6}$ \\ \hline
$\mathtt{4}$ \\ \hline
$\mathtt{1}$ \\ \hline
$\mathtt{\bar{1}}$ \\ \hline
$\mathtt{\bar{2}}$ \\ \hline
$\cdot \cdot \cdot $ \\ \hline
\end{tabular}
\in B_{\infty }(\omega _{3})\text{ corresponds }\lambda = 
\begin{tabular}{|l|llll}
\cline{1-1}
$R$ & $A$ &  &  &  \\ \cline{1-3}
&  & \multicolumn{1}{|l}{$R$} & \multicolumn{1}{|l}{$\mathbf{A}$} &  \\ 
\cline{1-4}\cline{4-4}
&  & \multicolumn{1}{|l}{} & \multicolumn{1}{|l}{$R$} & \multicolumn{1}{|l}{}
\\ \cline{1-4}\cline{4-4}
&  & \multicolumn{1}{|l}{} & \multicolumn{1}{|l}{} & \multicolumn{1}{|l}{$A$}
\\ \cline{1-4}\cline{4-4}
\end{tabular}
. \\
\text{We have }\widetilde{f}_{4}(\mathcal{C)}& = 
\begin{tabular}{|c|}
\hline
$\mathtt{7}$ \\ \hline
$\mathtt{6}$ \\ \hline
$\mathtt{5}$ \\ \hline
$\mathtt{1}$ \\ \hline
$\mathtt{\bar{1}}$ \\ \hline
$\mathtt{\bar{2}}$ \\ \hline
$\cdot \cdot \cdot $ \\ \hline
\end{tabular}
\text{ or equivalently }\widetilde{f}_{4}(\lambda )= 
\begin{tabular}{|l|llll}
\cline{1-1}
$R$ & $A$ &  &  &  \\ \cline{1-4}
&  & \multicolumn{1}{|l}{\ \ \ } & \multicolumn{1}{|l}{$\mathbf{R}$} & 
\multicolumn{1}{|l}{} \\ \cline{1-4}
&  & \multicolumn{1}{|l}{} & \multicolumn{1}{|l}{} & \multicolumn{1}{|l}{}
\\ \cline{1-4}
&  & \multicolumn{1}{|l}{} & \multicolumn{1}{|l}{} & \multicolumn{1}{|l}{$A$}
\\ \cline{1-4}
\end{tabular}
\end{align*}
for the node $\gamma =(3,4)$ is addable in $\lambda $ with content $c(\gamma
)=4-3+3=4.$ Here we have written $\overline{k}$ instead of $-k$ for any
positive integer $k$.
\end{example}

\noindent Since $B_{\infty }^{_{\underline{s}}}(\omega _{\underline{s}})$ is
labelled by partitions, the crystal $B_{\infty }(\underline{s})$ is labelled
by multipartitions $\underline{\lambda }=(\lambda ^{(0)},...,\lambda ^{(\ell
-1)}).$ More precisely, to each vertex $b=\mathcal{C}_{0}\otimes \cdot \cdot
\cdot \otimes \mathcal{C}_{\ell -1}\in B_{\infty }(\underline{s})$ such that 
$\mathcal{C}_{k}\in B(\omega _{s_{k}})$ for $k=0,...,\ell -1,$ corresponds
the multipartition $\underline{\lambda }=(\lambda ^{(0)},...,\lambda ^{(\ell
-1)})$ where $\lambda ^{(k)}$ is obtained from $\mathcal{C}_{k}$ as
described above. By a part of the multipartition $\underline{\lambda }$, we
mean a part of one of the partitions $\lambda ^{(k)}$. The action of the
operators $\widetilde{e}_{i}$ and $\widetilde{f}_{i}$ on $b$ are deduced
from (\ref{TENS1}) and (\ref{TENS2}).\ One verifies easily that they can be
also obtained by applying the following algorithm.\ Let $w_{i}$ be the word
obtained by considering the letters $i$ and $i+1$ successively in the
columns $\mathcal{C}_{0},\mathcal{C}_{1},...,\mathcal{C}_{\ell -1}$ of $b$
when these columns are read from top to bottom and left to right. The word $%
w_{i}$ is called the $i$-signature of $b.\;$Encode in $w_{i}$ each integer $i
$ by a symbol $+$ and each integer $i+1$ by a symbol $-.$ Choose any factor
of consecutive $-+$ and delete it.\ Repeat this procedure until no factor $-+
$ can be deleted. The final sequence $\widetilde{w}_{i}$ is uniquely
determined and has the form $\widetilde{w}_{i}=+^{p}-^{q}.$ It is called the
reduced $i$ signature of $b.$ Then $\widetilde{f}_{i}(b)$ is obtained from $b
$ by replacing the integer $i$ corresponding to the rightmost symbol $+$ in $%
\widetilde{w}_{i}$ by $i+1.$ Similarly, $\widetilde{e}_{i}(b)$ is obtained
from $\lambda $ by replacing the integer $i+1$ corresponding to the leftmost
symbol $-$ in $\widetilde{w}_{i}$ by $i.$

\noindent One can describe similarly the action of $\widetilde{e}_{i}$ and $%
\widetilde{f}_{i}$ on $b$ considered as the multipartition $\underline{%
\lambda }=(\lambda ^{(0)},...,\lambda ^{(\ell -1)}).\;$This time, one
considers the word $W_{i}$ obtained by reading the addable and removable
nodes of $\underline{\lambda }$ with content $i$ successively in the
partitions $\lambda ^{(0)},...,\lambda ^{(\ell -1)}.\;$This reading is well
defined since a partition cannot both content addable and removable nodes
with content $i.$ Encode in $W_{i}$ each removable node by $R$ and each
addable node by $A.\;$Let $\widetilde{W}_{i}$ be the word obtained by
deleting the factors $RA$ successively in the encoding. One can write 
\begin{equation}
\widetilde{W}_{i}=A^{p}R^{q}.  \label{actinfinity}
\end{equation}
Then $\widetilde{f}_{i}(b)$ is obtained from $\underline{\lambda }$ by
adding the node $\gamma $ corresponding to the rightmost symbol $A$ in $%
\widetilde{W}_{i}.$

\subsection{The crystal isomorphism between $B_{\infty }(\protect\omega
_{k})\otimes B_{\infty }(\protect\omega _{l})$ and $B_{\infty }(\protect%
\omega _{l})\otimes B_{\infty }(\protect\omega _{k})$}

Consider $k$ and $l$ two integers.\ We denote by $\psi_{k,l}$ the crystal
graph isomorphism 
\begin{equation}
\psi_{k,l}:B_{\infty}(\omega_{k})\otimes B_{\infty}(\omega_{l})\overset{%
\simeq}{\rightarrow}B_{\infty}(\omega_{l})\otimes B_{\infty}(\omega_{k}).
\label{iso_psi}
\end{equation}
To give the explicit combinatorial description of $\psi_{k,l}$, we start by
considering the crystal graph isomorphism $\theta_{k,l}$ between $B_{\infty
}(k)\otimes B_{\infty}(l)$ and $B_{\infty}(l)\otimes B_{\infty}(k).$
Consider $C_{1}\otimes C_{2}$ in $B_{\infty}(k)\otimes B_{\infty}(l).$ We
are going to associate to $C_{1}\otimes C_{2}$ a vertex $C_{2}^{\prime}%
\otimes C_{1}^{\prime}\in B_{\infty}(l)\otimes B_{\infty}(k).$

\noindent Suppose first $k\geq l.\;$Consider $x_{1}=\min \{t\in C_{2}\}.\;$%
We associate to $x_{1}$ the integer $y_{1}\in C_{1}$ such that 
\begin{equation}
y_{1}=\left\{ 
\begin{array}{l}
\max \{z\in C_{1}\mid z\leq x_{1}\}\text{ if }\min \{z\in C_{1}\}\leq x \\ 
\max \{z\in C_{1}\}\text{ otherwise}
\end{array}
\right. .  \label{algo1}
\end{equation}
We repeat the same procedure to the columns $C_{1}\setminus \{y_{1}\}$ and $%
C_{2}\setminus \{x_{1}\}.$ By induction this yields a sequence $%
\{y_{1},...,y_{l}\}\subset C_{1}.\;$Then we define $C_{2}^{\prime }$ as the
column obtained by reordering the integers of $\{y_{1},...,y_{l}\}$ and $%
C_{1}^{\prime }$ as the column obtained by reordering the integers of $%
C_{1}\setminus \{y_{1},...,y_{l}\}+C_{2}.$

\noindent Now, suppose $k<l.\;$Consider $x_{1}=\min \{t\in C_{1}\}.\;$We
associate to $x_{1}$ the integer $y_{1}\in C_{2}$ such that 
\begin{equation}
y_{1}=\left\{ 
\begin{array}{l}
\min \{z\in C_{2}\mid x_{1}\leq z\}\text{ if }\max \{z\in C_{2}\}\geq x \\ 
\min \{z\in C_{2}\}\text{ otherwise}
\end{array}
\right. .  \label{algo2}
\end{equation}
We repeat the same procedure to the columns $C_{1}\setminus \{x_{1}\}$ and $%
C_{2}\setminus \{y_{1}\}$ and obtain a sequence $\{y_{1},...,y_{k}\}\subset
C_{2}.\;$Then we define $C_{1}^{\prime }$ as the column obtained by
reordering the integers of $\{y_{1},...,y_{k}\}$ and $C_{2}^{\prime }$ as
the column obtained by reordering the integers of $C_{2}\setminus
\{y_{1},...,y_{l}\}+C_{1}.$

\noindent We denote by $\theta_{k,l}$ the map defined from $B_{\infty
}(k)\otimes B_{\infty}(l)$ to $B_{\infty}(l)\otimes B_{\infty}(k)$ by
setting $\theta_{k,l}(C_{1}\otimes C_{2})=C_{2}^{\prime}\otimes
C_{1}^{\prime}.$

\begin{example}
\label{exam2}Consider $C_{1}= 
\begin{tabular}{|l|}
\hline
$\mathtt{9}$ \\ \hline
$\mathtt{8}$ \\ \hline
$\mathtt{7}$ \\ \hline
$\mathtt{5}$ \\ \hline
$\mathtt{4}$ \\ \hline
$\mathtt{2}$ \\ \hline
\end{tabular}
$ and $C_{2}=$%
\begin{tabular}{|l|}
\hline
$\mathtt{7}$ \\ \hline
$\mathtt{6}$ \\ \hline
$\mathtt{5}$ \\ \hline
$\mathtt{3}$ \\ \hline
$\mathtt{1}$ \\ \hline
\end{tabular}
.\ We obtain $\{y_{1},y_{2},y_{3},y_{4},y_{5}\}=\{9,2,4,5,7\}.\;$This gives $%
C_{2}^{\prime }=$%
\begin{tabular}{|l|}
\hline
$\mathtt{9}$ \\ \hline
$\mathtt{7}$ \\ \hline
$\mathtt{5}$ \\ \hline
$\mathtt{4}$ \\ \hline
$\mathtt{2}$ \\ \hline
\end{tabular}
and $C_{1}^{\prime }=$%
\begin{tabular}{|l|}
\hline
$\mathtt{8}$ \\ \hline
$\mathtt{7}$ \\ \hline
$\mathtt{6}$ \\ \hline
$\mathtt{5}$ \\ \hline
$\mathtt{3}$ \\ \hline
$\mathtt{1}$ \\ \hline
\end{tabular}
. Thus $\theta _{6,5}(C_{1}\otimes C_{2})=C_{2}^{\prime }\otimes
C_{1}^{\prime }.$ One can also easily verifies that $\theta
_{5,6}(C_{2}^{\prime }\otimes C_{1}^{\prime })=C_{1}\otimes C_{2}.$
\end{example}

\begin{proposition}
\label{iso_NY}The map $\theta _{k,l}$ is an isomorphism of $\mathcal{U}_{v}(%
\frak{sl}_{\infty })$-crystals that is, for any integer $i$ and any vertex $%
C_{1}\otimes C_{2}\in B_{\infty }(k)\otimes B_{\infty }(l)$, we have 
\begin{equation}
\theta _{k,l}\circ \widetilde{e}_{i}(C_{1}\otimes C_{2})=\widetilde{e}%
_{i}(C_{2}^{\prime }\otimes C_{1}^{\prime })\text{ and }\theta _{k,l}\circ 
\widetilde{f}_{i}(C_{1}\otimes C_{2})=\widetilde{f}_{i}(C_{2}^{\prime
}\otimes C_{1}^{\prime }).  \label{comm}
\end{equation}
\end{proposition}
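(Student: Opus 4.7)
The plan is to reduce the claim to the Nakayashiki--Yamada description of the combinatorial $R$-matrix for fundamental crystals in type $A$, and then transfer back to the infinite rank situation.

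Fix $C_{1}\otimes C_{2}\in B_{\infty}(k)\otimes B_{\infty}(l)$ and an index $i\in \mathbb{Z}$. Only finitely many letters are involved in the assertion: those of $C_{1}$, $C_{2}$, those of the image $C_{2}^{\prime}\otimes C_{1}^{\prime}=\theta_{k,l}(C_{1}\otimes C_{2})$, and the letters $i$ and $i+1$ possibly exchanged by $\widetilde{e}_{i}$ or $\widetilde{f}_{i}$ (on either side of (\ref{comm})). I would therefore choose integers $a\leq i$ and $b\geq i+1$ with $[a,b]$ containing all these letters. Then $C_{1}\otimes C_{2}$ and the relevant neighbours all lie inside $B_{a,b}(k)\otimes B_{a,b}(l)$, which via the identification $\mathcal{U}_{v}(\frak{sl}_{a,b})\simeq \mathcal{U}_{v}(\frak{sl}_{d})$ with $d=b-a+1$ is a tensor product of two fundamental $\mathcal{U}_{v}(\frak{sl}_{d})$-crystals.

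The key step is then to recognise the restriction of $\theta_{k,l}$ to $B_{a,b}(k)\otimes B_{a,b}(l)$ as the Nakayashiki--Yamada combinatorial $R$-matrix of \cite{ny}, which is the unique crystal isomorphism $B(\omega_{k})\otimes B(\omega_{l})\overset{\simeq}{\rightarrow} B(\omega_{l})\otimes B(\omega_{k})$ in type $A_{d-1}$. Their description proceeds through a matching rule on columns that closely resembles formulas (\ref{algo1}) and (\ref{algo2}). I would check the identification directly by splitting into the cases $k\geq l$ and $k<l$ (which in \cite{ny} correspond to matching from the smaller column outward), and by taking into account the reversed coproduct convention (\ref{TENS1})--(\ref{TENS2}) used here. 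Once the two algorithms are seen to coincide, the commutations (\ref{comm}) on $B_{a,b}(k)\otimes B_{a,b}(l)$ follow from the Nakayashiki--Yamada theorem.

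To transfer back to $B_{\infty}$, I would argue exactly as in Lemma \ref{lem_pia}: on any vertex whose letters lie in $[a,b]$, the signature rule used to compute $\widetilde{e}_{i}$ and $\widetilde{f}_{i}$ for $i\in [a,b[$ is insensitive to the ambient range, so the action of these operators in $B_{\infty}(k)\otimes B_{\infty}(l)$ coincides with their action in $B_{a,b}(k)\otimes B_{a,b}(l)$, and similarly for $B_{\infty}(l)\otimes B_{\infty}(k)$. Since $i$ and $C_{1}\otimes C_{2}$ were arbitrary, (\ref{comm}) holds in general, and bijectivity of $\theta_{k,l}$ (with inverse $\theta_{l,k}$, as already illustrated in Example \ref{exam2}) upgrades it to a crystal isomorphism. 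The main obstacle is the algorithmic identification with \cite{ny}: in particular the \textbf{otherwise} branches of (\ref{algo1}) and (\ref{algo2})---which occur when $\min C_{1}>x_{1}$ or $\max C_{2}<x_{1}$---must be matched with the corresponding wrap-around behaviour of the Nakayashiki--Yamada sliding procedure. After this comparison is settled the rest of the argument is purely formal.
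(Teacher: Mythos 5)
Your proposal follows essentially the same route as the paper: restrict to a finite interval containing all relevant letters, identify $\theta_{k,l}$ there with the Nakayashiki--Yamada combinatorial $R$-matrix (the paper cites Proposition 3.21 of \cite{ny} for exactly this), observe that the finite-rank and infinite-rank crystal operators agree on such vertices, and conclude; your only deviations are cosmetic (you absorb the indices $i,i+1$ into the interval, whereas the paper treats $i$ outside the range as a separate trivially vanishing case). The argument is correct.
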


\begin{proof}
Choose $a<b$ two integers such that the letters of $C_{1}$ and $C_{2}$
belong to $[a,b].\;$It follows from Proposition 3.21 of \cite{ny}, that the
isomorphism between the finite $\mathcal{U}_{v}(\frak{sl}_{a-1,b+2})$%
-crystals $B_{a-1,b+2}(k)\otimes B_{a-1,b+2}(l)$ and $B_{a-1,b+2}(l)\otimes
B_{a-1,b+2}(k)$ is given by the map $\theta _{k,l}.$ Since the actions of
the crystal operators $\widetilde{e}_{i}$ and $\widetilde{f}_{i}$ with $i\in
\lbrack a-1,b+2[$ on the crystals $B_{a-1,b+2}(l)$ and $B_{a-1,b+2}(k))$ can
be obtained from the crystal structures of $B_{\infty }(k)$ and $B_{\infty
}(l)$, this implies the commutation relations (\ref{comm}) for any $i\in
\lbrack a-1,b+2[.$ Now if $i\notin \lbrack a-1,b+2[$ we have $\widetilde{e}%
_{i}(C_{1}\otimes C_{2})=\widetilde{f}_{i}(C_{1}\otimes C_{2})=0$ because
the letters of $C_{1}$ and $C_{2}$ belong to $[a,b].$ Similarly $\widetilde{e%
}_{i}(C_{2}^{\prime }\otimes C_{1}^{\prime })=C_{2}^{\prime }\otimes
C_{1}^{\prime }=0$ because the letters of $C_{1}\cup C_{2}$ are the same as
those of $C_{1}^{\prime }\cup C_{2}^{\prime }.\;$Hence (\ref{comm}) holds
for any integer $i.$
\end{proof}

\bigskip

Now consider $\mathcal{C}_{1}\otimes \mathcal{C}_{2}\in B_{\infty }(\omega
_{k})\otimes B_{\infty }(\omega _{l}).$ Let $a$ be any integer such that $%
\mathcal{C}_{1}$ and $\mathcal{C}_{2}$ both contain all the integers $x<a.$
Set $C_{1}=\pi _{a}(\mathcal{C}_{1}),$ $C_{2}=\pi _{a}(\mathcal{C}_{2})$ and 
$\theta _{k,l}(C_{1}\otimes C_{2})=C_{2}^{\prime }\otimes C_{1}^{\prime }.$
Since $C_{1}$ and $C_{2}$ both contain only letters $x\geq a$, it is also
the case for the columns $C_{1}^{\prime }$ and $C_{2}^{\prime }$ by the
previous combinatorial description of the isomorphism $\theta _{k,l}$. Write 
$\mathcal{C}_{1}^{\prime }$ and $\mathcal{C}_{2}^{\prime }\ $for the
infinite columns obtained respectively from $C_{1}^{\prime }$ and $%
C_{2}^{\prime }$ by adding boxes containing all the letters $x<a.$ Then $%
\mathcal{C}_{1}^{\prime }\in B_{\infty }(\omega _{k})$ and $\mathcal{C}%
_{2}^{\prime }\in B_{\infty }(\omega _{l}).$ Indeed $\mathcal{C}_{1}\in
B_{\infty }(\omega _{k}),$ $\mathcal{C}_{2}\in B_{\infty }(\omega _{l})$ and 
$C_{1},C_{1}^{\prime }$ (resp.\ $C_{2},C_{2}^{\prime }$) have the same
height.

\begin{corollary}
\label{cor_iso}(of Proposition \ref{iso_NY}) For any $\mathcal{C}_{1}\otimes 
\mathcal{C}_{2}\in B_{\infty }(\omega _{k})\otimes B_{\infty }(\omega _{l})$
we have with the above notation $\psi _{k,l}(\mathcal{C}_{1}\otimes \mathcal{%
C}_{2})=\mathcal{C}_{2}^{\prime }\otimes \mathcal{C}_{1}^{\prime }$.
\end{corollary}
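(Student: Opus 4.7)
The plan is to verify that the concrete map $\widetilde{\psi}_{k,l} : \mathcal{C}_{1}\otimes \mathcal{C}_{2} \mapsto \mathcal{C}_{2}'\otimes \mathcal{C}_{1}'$ described in the statement is a well-defined crystal isomorphism, and then to conclude $\widetilde{\psi}_{k,l}=\psi_{k,l}$ by the uniqueness of the crystal isomorphism between $B_\infty(\omega_k)\otimes B_\infty(\omega_l)$ and $B_\infty(\omega_l)\otimes B_\infty(\omega_k)$. There are three things to check: independence of the auxiliary integer $a$, compatibility with the Kashiwara operators, and bijectivity.

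For the independence from $a$, I would compare the outputs for two admissible cutoffs $a'<a$. The extra letters present in $\pi_{a'}(\mathcal{C}_j)$ but not in $\pi_a(\mathcal{C}_j)$ are exactly $a',a'+1,\dots,a-1$, and they appear in both columns. A direct check of the rules (\ref{algo1}) and (\ref{algo2}) shows that whenever a letter $x$ lies in both columns simultaneously, the max/min prescription pairs $x$ with itself; thus these shared small letters do not interact with the letters $\geq a$. The result of $\theta$ above the level $a$ is therefore unchanged, and the reinsertion of the letters $<a'$ is identical on both sides, so the same $\mathcal{C}_1',\mathcal{C}_2'$ are produced.

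For the compatibility with crystal operators, fix $\widetilde{f}_{i}$ (the argument for $\widetilde{e}_{i}$ being identical) and choose $a$ small enough that both $\mathcal{C}_{1}\otimes \mathcal{C}_{2}$ and $\widetilde{f}_{i}(\mathcal{C}_{1}\otimes \mathcal{C}_{2})$ satisfy the hypothesis of Lemma \ref{lem_pia}. That lemma yields $\pi_{a}\bigl(\widetilde{f}_{i}(\mathcal{C}_{1}\otimes \mathcal{C}_{2})\bigr)=\widetilde{f}_{i}(C_{1}\otimes C_{2})$ inside $B_\infty(h_0)\otimes B_\infty(h_1)$, and Proposition \ref{iso_NY} gives $\theta_{h_0,h_1}\circ \widetilde{f}_i = \widetilde{f}_i\circ \theta_{h_0,h_1}$ there. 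Since $\widetilde{f}_{i}$ does not touch letters $<a$, the reinflation step also commutes with $\widetilde{f}_{i}$, whence $\widetilde{\psi}_{k,l}\circ \widetilde{f}_{i}=\widetilde{f}_{i}\circ \widetilde{\psi}_{k,l}$. Bijectivity is immediate because the symmetric algorithm for $\theta_{l,k}$ lifts, by the same truncate-and-reinsert recipe, to an explicit two-sided inverse of $\widetilde{\psi}_{k,l}$.

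It follows that $\widetilde{\psi}_{k,l}$ is a crystal isomorphism from $B_\infty(\omega_k)\otimes B_\infty(\omega_l)$ to $B_\infty(\omega_l)\otimes B_\infty(\omega_k)$. Since each connected component of this tensor product is the crystal of an irreducible highest weight module and the decomposition is multiplicity-free, the crystal isomorphism is unique once known to exist; hence $\widetilde{\psi}_{k,l}=\psi_{k,l}$. The main obstacle is the bookkeeping in the independence-of-$a$ step: one must confirm that the max/min prescriptions in (\ref{algo1}) and (\ref{algo2}) really do pair each shared small letter with itself and leave the letters above $a$ alone. Once this is done, the commutation statement is essentially handed to us by Lemma \ref{lem_pia} and Proposition \ref{iso_NY}.
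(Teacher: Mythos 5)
Your argument is correct in substance, but it takes a genuinely different route from the paper's. The paper fixes a path of lowering operators from the highest weight vertex, writes $\mathcal{C}_1\otimes\mathcal{C}_2=\widetilde F(\mathcal{C}^{(k)}\otimes\mathcal{C}^{(l)})$, truncates with Lemma \ref{lem_pia}, applies Proposition \ref{iso_NY} to get $C_2'\otimes C_1'=\widetilde F(C^{(l)}\otimes C^{(k)})$, reinflates, and concludes because $\psi_{k,l}$ sends highest weight vertex to highest weight vertex and commutes with $\widetilde F$; you instead check that the truncate--apply-$\theta_{k,l}$--reinflate map is itself a crystal isomorphism (well defined, equivariant, bijective) and then identify it with $\psi_{k,l}$ by uniqueness. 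Both proofs rest on the same two ingredients, Lemma \ref{lem_pia} and Proposition \ref{iso_NY}, so the difference is mainly organisational, but each buys something: your version applies verbatim to every connected component of $B_\infty(\omega_k)\otimes B_\infty(\omega_l)$, whereas the paper's opening step ``$\mathcal{C}_1\otimes\mathcal{C}_2=\widetilde F(\mathcal{C}^{(k)}\otimes\mathcal{C}^{(l)})$'' literally covers only the component of the highest weight vertex; the paper's version avoids your two extra burdens, namely the independence-of-$a$ check (which the paper only records as a remark after its example, and which your self-pairing observation does settle: the common letters $a',\dots,a-1$ are processed first and each is matched with itself under (\ref{algo1}) and (\ref{algo2}), so they pass through untouched) and the appeal to uniqueness of the isomorphism. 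That appeal is legitimate, since the decomposition of $B_\infty(\omega_k)\otimes B_\infty(\omega_l)$ into highest weight components is multiplicity free --- the paper already presupposes this when it speaks of ``the'' isomorphism $\psi_{k,l}$ in (\ref{iso_psi}) --- but you should justify it in a line (for instance, the highest weight vertices of the tensor product have pairwise distinct weights). Two small points to tighten: for equivariance you must also treat the vanishing cases, i.e. $\widetilde f_i(\mathcal{C}_1\otimes\mathcal{C}_2)=0$ must force $\widetilde f_i$ to kill the image, which follows from the same truncation argument combined with (\ref{comm}); and for bijectivity it is cleaner to invoke the bijectivity of $\theta_{k,l}$ already contained in Proposition \ref{iso_NY} than to assert that $\theta_{l,k}$ is a two-sided inverse, a fact the paper only illustrates in Example \ref{exam2}.
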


\begin{proof}
Recall that $\mathcal{C}^{(k)}\otimes \mathcal{C}^{(l)}$ is the highest
weight vertex of $B_{\infty }(\omega _{k})\otimes B_{\infty }(\omega
_{l}).\; $Write $\mathcal{C}_{1}\otimes \mathcal{C}_{2}=\widetilde{F}(%
\mathcal{C}^{(k)}\otimes \mathcal{C}^{(l)})$ where $\widetilde{F}$ is a
sequence of crystal operators $\widetilde{f}_{i},$ $i\in \mathbb{Z}$.\ The
crystal graph isomorphism $\psi _{k,l}$ must send the highest vertex of $%
B_{\infty }(\omega _{k})\otimes B_{\infty }(\omega _{l})$ on the highest
weight vertex of $B_{\infty }(\omega _{l})\otimes B_{\infty }(\omega _{k}).$
Thus, we have $\psi _{k,l}(\mathcal{C}^{(k)}\otimes \mathcal{C}^{(l)})=%
\mathcal{C}^{(l)}\otimes \mathcal{C}^{(k)}.$ In particular, the Corollary is
true for $\mathcal{C}_{1}\otimes \mathcal{C}_{2}=\mathcal{C}^{(k)}\otimes 
\mathcal{C}^{(l)}.$

\noindent Now consider $\mathcal{C}_{1}\otimes \mathcal{C}_{2}\in B_{\infty
}(\omega _{k})\otimes B_{\infty }(\omega _{l})$ and choose $a$ an integer
such that $\mathcal{C}_{1}$ and $\mathcal{C}_{2}$ both contain all the
integers $x<a.$ Set $C^{(k)}=\pi _{a}(\mathcal{C}^{(k)})$ and $C^{(l)}=\pi
_{a}(\mathcal{C}^{(l)}).$ Similarly write $C_{1}=\pi _{a}(\mathcal{C}_{1}),$ 
$C_{2}=\pi _{a}(\mathcal{C}_{2}).$ Since $\mathcal{C}_{1}\otimes \mathcal{C}%
_{2}=\widetilde{F}(\mathcal{C}^{(k)}\otimes \mathcal{C}^{(l)})$ we obtain
from Lemma \ref{lem_pia} the equality $C_{1}\otimes C_{2}=\widetilde{F}%
(C^{(k)}\otimes C^{(l)}).$ By definition of the crystal isomorphism $\theta
_{k,l},$ we thus have $C_{2}^{\prime }\otimes C_{1}^{\prime }=\widetilde{F}%
(C^{(l)}\otimes C^{(k)}).$ The columns $C^{(k)},C^{(l)},C_{1}^{\prime }$ and 
$C_{2}^{\prime }$ contains only letters $x\geq a.$ Hence we can write $%
\widetilde{F}=\widetilde{f}_{i_{1}}\cdot \cdot \cdot \widetilde{f}_{i_{r}}$
with $i_{m}\geq a$ for any $m\in \{1,...,r\}.$ Now the infinite columns $%
\mathcal{C}^{(l)},\mathcal{C}^{(k)},\mathcal{C}_{1}^{\prime }$ and $\mathcal{%
C}_{2}^{\prime }$ are respectively obtained from $C^{(k)},C^{(l)},C_{1}^{%
\prime }$ and $C_{2}^{\prime }$ by adding all the letters $x<a.$ We can thus
deduce from the equality $C_{2}^{\prime }\otimes C_{1}^{\prime }=\widetilde{F%
}(C^{(l)}\otimes C^{(k)}),$ the equality $\mathcal{C}_{2}^{\prime }\otimes 
\mathcal{C}_{1}^{\prime }=\widetilde{F}(\mathcal{C}^{(l)}\otimes \mathcal{C}%
^{(k)}).$ This implies that $\psi _{k,l}(\mathcal{C}_{1}\otimes \mathcal{C}%
_{2})=\mathcal{C}_{2}^{\prime }\otimes \mathcal{C}_{1}^{\prime }$.
\end{proof}

\begin{example}
Consider $\lambda ^{(1)}=(5,5,5,4,4,3)\in B_{\infty }(\omega _{4})$ and $%
\lambda ^{(2)}=(4,4,4,3,2)\in B_{\infty }(\omega _{3}).\;$The columns $%
\mathcal{C}_{1}$, $\mathcal{C}_{2}$ such that $\mathcal{C}_{1}\otimes 
\mathcal{C}_{2}=(\lambda ^{(1)},\lambda ^{(2)})\in B_{\infty }(\omega
_{4})\otimes B_{\infty }(\omega _{3})$ are 
\begin{equation*}
\mathcal{C}_{1}= 
\begin{tabular}{|c|}
\hline
$\mathtt{9}$ \\ \hline
$\mathtt{8}$ \\ \hline
$\mathtt{7}$ \\ \hline
$\mathtt{5}$ \\ \hline
$\mathtt{4}$ \\ \hline
$\mathtt{2}$ \\ \hline
$\mathtt{\bar{2}}$ \\ \hline
$\cdot \cdot \cdot $ \\ \hline
\end{tabular}
\text{ and }\mathcal{C}_{2}= 
\begin{tabular}{|c|}
\hline
$\mathtt{7}$ \\ \hline
$\mathtt{6}$ \\ \hline
$\mathtt{5}$ \\ \hline
$\mathtt{3}$ \\ \hline
$\mathtt{1}$ \\ \hline
$\mathtt{\bar{2}}$ \\ \hline
$\cdot \cdot \cdot $ \\ \hline
\end{tabular}
.\;\text{Hence }C_{1}= 
\begin{tabular}{|l|}
\hline
$\mathtt{9}$ \\ \hline
$\mathtt{8}$ \\ \hline
$\mathtt{7}$ \\ \hline
$\mathtt{5}$ \\ \hline
$\mathtt{4}$ \\ \hline
$\mathtt{2}$ \\ \hline
\end{tabular}
\text{ and }C_{2}= 
\begin{tabular}{|l|}
\hline
$\mathtt{7}$ \\ \hline
$\mathtt{6}$ \\ \hline
$\mathtt{5}$ \\ \hline
$\mathtt{3}$ \\ \hline
$\mathtt{1}$ \\ \hline
\end{tabular}
\text{ for }a=\bar{2}.
\end{equation*}
We deduce from Example \ref{exam2} that 
\begin{equation*}
\mathcal{C}_{2}^{\prime }= 
\begin{tabular}{|c|}
\hline
$\mathtt{9}$ \\ \hline
$\mathtt{7}$ \\ \hline
$\mathtt{5}$ \\ \hline
$\mathtt{4}$ \\ \hline
$\mathtt{2}$ \\ \hline
$\mathtt{\bar{2}}$ \\ \hline
$\cdot \cdot \cdot $ \\ \hline
\end{tabular}
\text{ and }\mathcal{C}_{1}^{\prime }= 
\begin{tabular}{|c|}
\hline
$\mathtt{8}$ \\ \hline
$\mathtt{7}$ \\ \hline
$\mathtt{6}$ \\ \hline
$\mathtt{5}$ \\ \hline
$\mathtt{3}$ \\ \hline
$\mathtt{1}$ \\ \hline
$\mathtt{\bar{2}}$ \\ \hline
$\cdot \cdot \cdot $ \\ \hline
\end{tabular}
.
\end{equation*}
Thus we can write $\psi _{4,3}(\lambda ^{(1)},\lambda ^{(2)})=(\mu
^{(2)},\mu ^{(1)})$ with $\mu ^{(1)}=(4,4,4,4,3,2)$ and $\mu
^{(2)}=(6,5,4,4,3).$
\end{example}

\noindent \textbf{Remark:} The columns $C_{1}$ and $C_{2}$ obtained in the
previous algorithm depend on the integer $a$ considered. Nevertheless, this
is not the case for the resulting infinite columns $\mathcal{C}_{1}^{\prime
} $ and $\mathcal{C}_{2}^{\prime }$ as long as $a<<0.$

\section{ Crystals in type $A_{e-1}^{(1)}$}

We now turn to the problem of studying the crystals in type $A_{e-1}^{(1)}$.
In this section, we recall and show basic facts on their combinatorial
descriptions.

\subsection{Background on $\mathcal{U}_{v}(\widehat{\frak{sl}}_{e})$}

In order to link the different labellings of the crystal graphs in type $%
A_{e-1}^{(1)}$ appearing in the literature, we shall need the two following
Dynkin diagrams 
\begin{equation}
A_{e-1}^{(1),+}: 
\begin{array}{ccccc}
&  & \overset{0}{\circ } &  &  \\ 
& \diagup &  & \diagdown &  \\ 
\overset{1}{\circ } & - & \cdot \cdot \cdot & - & \overset{e-1}{\circ }
\end{array}
\text{ and }A_{e-1}^{(1),-}: 
\begin{array}{ccccc}
&  & \overset{0}{\circ } &  &  \\ 
& \diagup &  & \diagdown &  \\ 
\overset{e-1}{\circ } & - & \cdot \cdot \cdot & - & \overset{1}{\circ }
\end{array}
.  \label{label_dynkin}
\end{equation}
Let $\mathcal{U}_{v}^{+}(\widehat{\frak{sl}}_{e})$ and $\mathcal{U}_{v}^{-}(%
\widehat{\frak{sl}}_{e})$ be the affine quantum groups defined respectively
from the root systems in type $A_{e-1}^{(1),+}$ and $A_{e-1}^{(1),-}$ (see
for example \cite{mathas}, Chapter 6). Observe that this notation, which is
not related to the triangular decomposition of$\mathcal{U}_{v}^{-}(\widehat{%
\frak{sl}}_{e})$, only means we are considering two copies of $\mathcal{U}%
_{v}^{+}(\widehat{\frak{sl}}_{e})$ with Dynkin diagrams $A_{e-1}^{(1),+}$
and $A_{e-1}^{(1),-}$.\ Write $\{\Lambda _{0}^{+},\Lambda
_{1}^{+},...,\Lambda _{e-1}^{+}\}$ and $\{\Lambda _{0}^{-},\Lambda
_{1}^{-},...,\Lambda _{e-1}^{-}\}$ for the dominant weights of the root
systems $A_{e-1}^{(1),+}$ and $A_{e-1}^{(1),-}.$ We have then $\Lambda
_{k}^{+}=\Lambda _{e-k}^{-}.\;$Write $\{E_{i}^{+},F_{i}^{+},K_{i}^{+}\mid
i=0,...,e-1\}$ and $\{E_{i}^{-},F_{i}^{-},K_{i}^{-}\mid i=0,...,e-1\}$ for
the sets of Chevalley generators respectively in $\mathcal{U}_{v}^{+}(%
\widehat{\frak{sl}}_{e})$ and $\mathcal{U}_{v}^{-}(\widehat{\frak{sl}}_{e})$%
.\ Let $v^{d^{+}}\in \mathcal{U}_{v}^{+}(\widehat{\frak{sl}}_{e})$ and $%
v^{d^{-}}\in \mathcal{U}_{v}^{-}(\widehat{\frak{sl}}_{e})$ be the quantum
derivation operators. Then the map 
\begin{equation}
\iota :\left\{ 
\begin{array}{l}
\mathcal{U}_{v}^{+}(\widehat{\frak{sl}}_{e})\rightarrow \mathcal{U}_{v}^{-}(%
\widehat{\frak{sl}}_{e}) \\ 
v^{d^{+}}\longmapsto v^{d-} \\ 
E_{i}^{+}\longmapsto E_{e-i}^{-},F_{i}^{+}\longmapsto F_{e-i}^{-}\text{ and }%
K_{i}^{+}\longmapsto K_{e-i}^{-}
\end{array}
\right.  \label{iso}
\end{equation}
is an isomorphism of algebras.

\noindent We associate to $\underline{s}=(s_{0},...,s_{\ell -1})\in \mathbb{Z%
}^{\ell }$ the dominants weights $\Lambda _{\underline{\frak{s}}%
}^{+}=\sum_{k=0}^{\ell -1}\Lambda _{s_{k}(\text{mod }e)}^{+}$ and $\Lambda _{%
\underline{\frak{s}}}^{-}=\sum_{k=0}^{\ell -1}\Lambda _{s_{k}(\text{mod }%
e)}^{-}$. Let $V_{e}(\Lambda _{\underline{\frak{s}}}^{+})$ (resp.\ $%
V_{e}(\Lambda _{\underline{\frak{s}}}^{-})$) be the irreducible $\mathcal{U}%
_{v}^{+}(\widehat{\frak{sl}}_{e})$-module (resp.\ $\mathcal{U}_{v}^{-}(%
\widehat{\frak{sl}}_{e})$-module) of highest weight $\Lambda _{\underline{%
\frak{s}}}^{+}$ (resp.\ $\Lambda _{\underline{\frak{s}}}^{-}$).\ These
modules can be constructed by using the Fock space representation $\frak{F}%
_{e}^{\underline{s}}$ of level $\ell .\;$Let $\Pi _{\ell ,n}$ be the set of
multipartitions $\underline{\lambda }=(\lambda ^{(0)},...,\lambda ^{(\ell
-1)})$ of length $\ell $ with rank $n,$ i.e.\ such that $\left| \lambda
^{(0)}\right| +\cdot \cdot \cdot \left| \lambda ^{(\ell -1)}\right| =n.\;$%
The Fock space $\frak{F}_{e}^{\underline{s}}$ is defined as the $\mathbb{C(}%
v)$-vector space generated by the symbols $\mid \underline{\lambda },%
\underline{s}\rangle $ with $\underline{\lambda }\in \Pi _{\ell ,n}$%
\begin{equation*}
\frak{F}_{e}^{\underline{s}}=\bigoplus_{n\geq 0}\bigoplus_{\underline{%
\lambda }\in \Pi _{\ell ,n}}\mathbb{C}(v)\mid \underline{\lambda },%
\underline{s}\rangle .
\end{equation*}
The Fock space $\frak{F}_{e}^{\underline{s}}$ can be endowed with the
structure of a $\mathcal{U}_{v}^{+}(\widehat{\frak{sl}}_{e})$-module or
equivalently with the structure of a $\mathcal{U}_{v}^{-}(\widehat{\frak{sl}}%
_{e})$-module.\ These actions are defined by introducing total orders $\prec
_{\underline{s}}^{+}$ and $\prec _{\underline{s}}^{-}$ on the $i$-nodes of
the multipartitions.\ Consider $\underline{\lambda }=(\lambda
^{(0)},...,\lambda ^{(\ell -1)})$ a multipartition and suppose $\underline{s}
$ (called the multicharge) is fixed. Then the nodes of $\underline{\lambda }$
can be identified with the triplet $\gamma =(a,b,c)$ where $c\in
\{0,...,\ell -1\}$ and $a,b$ are respectively the row and column indices of
the node $\gamma $ in $\lambda ^{(c)}.$ The content of $\gamma $ is the
integer $c\left( \gamma \right) =b-a+s_{c}$ and the residue $\mathrm{res(}%
\gamma )$ of $\gamma $ is the element of $\mathbb{Z}/e\mathbb{Z}$ such that 
\begin{equation}
\mathrm{res}(\gamma )\equiv c(\gamma )(\text{mod }e).  \label{res}
\end{equation}
We will say that $\gamma $ is an $i$-node of $\underline{\lambda }$ when $%
\mathrm{res}(\gamma )\equiv i(\text{mod }e).$ Let $\gamma
_{1}=(a_{1},b_{1},c_{1})$ and $\gamma _{2}=(a_{2},b_{2},c_{2})$ be two $i$%
-nodes in $\underline{\lambda }$. We define the total order $\prec _{%
\underline{s}}^{+}$ and $\prec _{\underline{s}}^{-}$ on the addable and
removable $i$-nodes of $\underline{\lambda }$ by setting 
\begin{align}
\gamma _{1}& \prec _{\underline{s}}^{+}\gamma _{2}\Longleftrightarrow
\left\{ 
\begin{array}{l}
c(\gamma _{1})<c(\gamma _{2})\text{ or} \\ 
c(\gamma _{1})=c(\gamma _{2})\text{ and }c_{1}<c_{2}
\end{array}
\right. ,  \label{def_order+} \\
\gamma _{1}& \prec _{\underline{s}}^{-}\gamma _{2}\Longleftrightarrow
\left\{ 
\begin{array}{l}
c(\gamma _{1})<c(\gamma _{2})\text{ or} \\ 
c(\gamma _{1})=c(\gamma _{2})\text{ and }c_{1}>c_{2}
\end{array}
\right. .  \label{def_order-}
\end{align}
Using these orders, it is possible to define an action of $\mathcal{U}%
_{v}^{+}(\widehat{\frak{sl}}_{e})$-module and an action of $\mathcal{U}%
_{v}^{-}(\widehat{\frak{sl}}_{e})$-module on $\frak{F}_{e}^{\underline{s}}$%
.\ These modules will be denoted by $\frak{F}_{e}^{\underline{s},+}$ and $%
\frak{F}_{e}^{\underline{s},-}.\;$For these actions the empty multipartition 
$\underline{\emptyset }=(\emptyset ,...,\emptyset )$ is a highest weight
vector respectively of highest weight $\Lambda _{\underline{\frak{s}}}^{+}$
and $\Lambda _{\underline{\frak{s}}}^{-}.$ We denote by $V_{e}^{\underline{s}%
}(\Lambda _{\underline{\frak{s}}}^{+})$ and $V_{e}^{\underline{s}}(\Lambda _{%
\underline{\frak{s}}}^{-})$ the irreducible components with highest weight
vector $\underline{\emptyset }$ in $\frak{F}_{e}^{\underline{s},+}$ and $%
\frak{F}_{e}^{\underline{s},-}$. We refer the reader to \cite{jim} for a
detailed exposition. Note that the modules $V_{e}^{\underline{s}}(\Lambda _{%
\underline{\frak{s}}}^{+})$ (resp.\ $V_{e}^{\underline{s}}(\Lambda _{%
\underline{\frak{s}}}^{-}))$ such that $\underline{s}\in \underline{\frak{s}}
$ are all isomorphic to the abstract module $V_{e}(\Lambda _{\underline{%
\frak{s}}}^{+})$ (resp.\ $V_{e}(\Lambda _{\underline{\frak{s}}}^{-}))$.\
However, the corresponding actions of the Chevalley operators do not
coincide in general. This will yield different parametrizations of the
associated crystals.

\subsection{Crystal graph of the Fock space $\frak{F}_{e}^{\protect%
\underline{s},+}$ and $\frak{F}_{e}^{\protect\underline{s},-}$}

The modules $\frak{F}_{e}^{\underline{s},+}$ and $\frak{F}_{e}^{\underline{s}%
,-}$ are integrable modules, thus by the general theory of Kashiwara, they
admit crystal bases.\ Write $B_{e}^{\underline{s},+}$ and $B_{e}^{\underline{%
s},-}$ for the crystal graphs corresponding to the action of $\mathcal{U}%
_{v}^{+}(\widehat{\frak{sl}}_{e})$ and $\mathcal{U}_{v}^{-}(\widehat{\frak{sl%
}}_{e})$ on $\frak{F}_{e}^{\underline{s}}$.\ These crystals are labelled by
multipartitions and their crystal graph structure can be explicitly
described by using the total orders $\prec _{\underline{s}}^{+}$ and $\prec
_{\underline{s}}^{-}.\;$Consider two multipartitions $\underline{\lambda },%
\underline{\mu }$ and an integer $i\in \{0,...,e-1\}.$ The crystal graph $%
B_{e}^{\underline{s}}(\Lambda _{\underline{\frak{s}}}^{+})$ (resp.\ $B_{e}^{%
\underline{s}}(\Lambda _{\underline{\frak{s}}}^{-}))$ of $V_{e}^{\underline{s%
}}(\Lambda _{\underline{\frak{s}}}^{+})$ (resp.\ $V_{e}^{\underline{s}%
}(\Lambda _{\underline{\frak{s}}}^{-}))$ is the connected component of $%
B_{e}^{\underline{s},+}$ (resp.\ $B_{e}^{\underline{s},-})$ whose highest
weight vertex is the empty multipartition.

\subsubsection{Crystal structure on $B_{e}^{\protect\underline{s},+}$ \label%
{act+}}

Consider the set of addable and removable $i$-nodes of $\underline{\lambda }$
(see Section \ref{subsecRA} for the definition of removable and addable
nodes). Let $w_{i}$ be the word obtained by writing the addable and
removable $i$-nodes of $\underline{\lambda }$ in \textit{decreasing} order
with respect to $\prec _{\underline{s}}^{+},$ next by encoding each addable $%
i$-node by the letter $A$ and each removable $i$-node by the letter $R$.\
Write $\widetilde{w}_{i}=A^{p}R^{q}$ for the word obtained from $w_{i}$ by
deleting as many of the factors $RA$ as possible. If $p>0,$ let $\gamma $ be
the rightmost addable $i$-node in $\widetilde{w}_{i}$.\ The node $\gamma $
is called the good $i$-node.\ Then we have an arrow $\underline{\lambda }%
\overset{i}{\rightarrow }\underline{\mu }$ in $B_{e}^{\underline{s},+}$ if
and only if $\underline{\mu }$ is obtained from $\underline{\lambda }$ by
adding the good $i$-node $\gamma .$

\subsubsection{Crystal structure on $B_{e}^{\protect\underline{s},-}$ \label%
{act-}}

Consider the set of addable and removable $i$-nodes of $\underline{\lambda }$%
. Let $w_{i}$ be the word obtained by writing these $i$-nodes in \textit{%
increasing} order with respect to $\prec _{\underline{s}}^{-},$ next by
encoding each addable $i$-node by the letter $A$ and each removable $i$-node
by the letter $R$.\ Write $\widetilde{w}_{i}=A^{p}R^{q}$ for the word
obtained from $w_{i}$ by deleting as many of the factors $RA$ as possible.
If $p>0,$ let $\gamma $ be the rightmost addable $i$-node in $\widetilde{w}%
_{i}$.\ Then we have an arrow $\underline{\lambda }\overset{i}{\rightarrow }%
\underline{\mu }$ in $B_{e}^{\underline{s},-}$ if and only if $\underline{%
\mu }$ is obtained from $\underline{\lambda }$ by adding the good $i$-node $%
\gamma .$

\subsubsection{Link between the crystals $B_{e}^{\protect\underline{s},+}$
and $B_{e}^{\protect\underline{s},-}$}

For any multicharge $\underline{s}=(s_{0},...,s_{\ell -1}),$ we denote by $%
\underline{s}^{\ast }$ the multicharge $\underline{s}^{\ast
}=(e-s_{0},...,e-s_{\ell -1}).\;$According to the isomorphism (\ref{iso}),
there should exist some bijections $\nu :B_{e}^{\underline{s},+}\rightarrow
B_{e}^{\underline{s}^{\ast },-}$ such that, given any two multipartitions $%
\underline{\lambda },\underline{\mu }$, 
\begin{equation}
\underline{\lambda }\overset{i}{\rightarrow }\underline{\mu }\text{ in }%
B_{e}^{\underline{s},+}\Longleftrightarrow \nu (\underline{\lambda })%
\overset{e-i}{\rightarrow }\nu (\underline{\mu })\text{ in }B_{e}^{%
\underline{s}^{\ast },-}.  \label{skewiso}
\end{equation}
Note that $\nu $ cannot be unique since each of the crystals $B_{e}^{%
\underline{s},+}$ and $B_{e}^{\underline{s}^{\ast },-}$ contains isomorphic
connected components. To each multipartition $\underline{\lambda }=(\lambda
^{(0)},...,\lambda ^{(\ell -1)})$ in $B_{e}^{\underline{s},+}$, we associate
its conjugate multipartition $\underline{\lambda }^{\prime }=$ $(\lambda
^{\prime (0)},...,\lambda ^{\prime (\ell -1)})$ in $B_{e}^{\underline{s}^*,+}$
where for any $k=0,...,\ell -1,$ $\lambda ^{\prime (k)}$ is the conjugate
partition of $\lambda ^{(k)}.$

\begin{proposition}
The map $\xi :\underline{\lambda }\mapsto \underline{\lambda }^{\prime }$
from $\frak{F}_{e}^{\underline{s},+}$ to $\frak{F}_{e}^{\underline{s}^{\ast
},-}$ is a bijection and satisfies (\ref{skewiso}).
\end{proposition}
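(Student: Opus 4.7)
The plan is to reduce everything to the single key fact that conjugation of Young diagrams is an involution compatible with the content statistic, and then track how this interacts with the two orders $\prec_{\underline{s}}^{+}$ and $\prec_{\underline{s}^{\ast}}^{-}$.

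First, bijectivity is essentially automatic: conjugation of partitions is an involution, so applying it componentwise gives an involution on multipartitions, and hence in particular $\xi$ is a bijection (with inverse again given by componentwise conjugation, viewed now from $\mathfrak{F}_e^{\underline{s}^{\ast},-}$ to $\mathfrak{F}_e^{\underline{s},+}$ since $(\underline{s}^{\ast})^{\ast}=\underline{s}$ modulo $e$).

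Next, I would set up the dictionary between nodes of $\underline{\lambda}$ and nodes of $\underline{\lambda}'$. If $\gamma=(a,b,c)$ is a node of $\underline{\lambda}$, let $\gamma'=(b,a,c)$ denote the corresponding node of $\underline{\lambda}'$. Two observations are crucial: (i) $\gamma$ is addable (resp.\ removable) for $\underline{\lambda}$ if and only if $\gamma'$ is addable (resp.\ removable) for $\underline{\lambda}'$, which is the standard fact that transposition sends addable corners to addable corners and removable corners to removable corners; (ii) by the definition of content in (\ref{res}), the content of $\gamma$ computed with multicharge $\underline{s}$ is $c(\gamma)=b-a+s_c$, while the content of $\gamma'$ computed with multicharge $\underline{s}^{\ast}$ is $c(\gamma')=a-b+(e-s_c)=e-c(\gamma)$. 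In particular, $\gamma$ is an $i$-node for $\underline{s}$ precisely when $\gamma'$ is an $(e-i)$-node for $\underline{s}^{\ast}$, so any potential arrow labelled $i$ in $B_e^{\underline{s},+}$ will indeed correspond to an arrow labelled $e-i$ in $B_e^{\underline{s}^{\ast},-}$.

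The heart of the proof is to compare the two orders. Using $c(\gamma_k')=e-c(\gamma_k)$ and the fact that the component index is unchanged by conjugation, a direct calculation from (\ref{def_order+}) and (\ref{def_order-}) gives
\begin{equation*}
\gamma_1'\prec_{\underline{s}^{\ast}}^{-}\gamma_2' \;\Longleftrightarrow\; \gamma_2\prec_{\underline{s}}^{+}\gamma_1,
\end{equation*}
so conjugation reverses the order on $i$-nodes. Consequently, writing the addable/removable $i$-nodes of $\underline{\lambda}$ in decreasing order for $\prec_{\underline{s}}^{+}$ produces exactly the same sequence of labels $A$ and $R$ as writing the addable/removable $(e-i)$-nodes of $\underline{\lambda}'$ in increasing order for $\prec_{\underline{s}^{\ast}}^{-}$. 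The words $w_i$ used in Sections \ref{act+} and \ref{act-} thus coincide, and so do their reductions $\widetilde{w}_i=A^pR^q$.

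Finally, I would conclude by reading off the good node. If $p>0$ and $\gamma$ is the rightmost $A$ in $\widetilde{w}_i$, then by the matching above $\gamma'$ is the rightmost $A$ in the corresponding reduced word for $\underline{\lambda}'$, i.e.\ the good $(e-i)$-node. Since $\underline{\mu}=\underline{\lambda}\cup\{\gamma\}$ is equivalent to $\underline{\mu}'=\underline{\lambda}'\cup\{\gamma'\}$, we obtain the required equivalence (\ref{skewiso}); if $p=0$ there is no arrow on either side, and the equivalence holds trivially. I expect the only delicate point to be the order comparison (where a sign flip together with the shift by $e$ must be reconciled carefully with the swap of $<$ and $>$ between $\prec^{+}$ and $\prec^{-}$); once this is pinned down, the rest is bookkeeping.
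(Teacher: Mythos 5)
Your proposal is correct and follows essentially the same route as the paper: the dictionary $\gamma=(a,b,c)\mapsto\gamma'=(b,a,c)$ with $c(\gamma')=e-c(\gamma)$, the matching of addable/removable nodes, the comparison of the orders $\prec_{\underline{s}}^{+}$ and $\prec_{\underline{s}^{\ast}}^{-}$, and the identification of the reduced words and good nodes. The only difference is that you spell out the order-reversal equivalence explicitly, which the paper leaves as an immediate consequence of the definitions (\ref{def_order+}) and (\ref{def_order-}).
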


\begin{proof}
Consider $\underline{\lambda }$ and $\underline{\lambda }^{\prime }$ as
vertices respectively of the crystals $\frak{F}_{e}^{\underline{s},+}$ and $%
\frak{F}_{e}^{\underline{s}^{\ast },-}$.\ Let $\gamma =(a,b,c)$ be a node
appearing at the right end of the $a$-th row of $\lambda ^{(c)}$ in $%
\underline{\lambda }.$ One associates to $\gamma $ the node $\gamma ^{\prime
}$ appearing on the top of the $a$-th column of $\lambda ^{\prime (k)}$ in $%
\underline{\lambda }^{\prime }.\;$Then by definition of the node $\gamma
^{\prime },$ we have $\gamma ^{\prime }=(b,a,c)$.\ This gives for the
content of the nodes $\gamma $ and $\gamma ^{\prime }$%
\begin{equation*}
c(\gamma )=b-a+s_{c}\text{ and }c(\gamma ^{\prime })=a-b+e-s_{c}.
\end{equation*}
Now observe that $\gamma $ is a removable (resp. addable) node if and only
if $\gamma ^{\prime }$ is a removable (resp. addable) node.\ We have thus 
\begin{equation}
\gamma \text{ is an }A\text{ (resp. }R)\text{ }i\text{-node }%
\Longleftrightarrow \gamma ^{\prime }\text{ is an }A\text{ (resp. }R)\text{ }%
(e-i)\text{-node.}  \label{equi}
\end{equation}
Let $w_{i}$ be the word obtained by writing the addable or removable $i$%
-nodes of $\underline{\lambda }$ in \textit{decreasing} order with respect
to $\prec _{\underline{s}}^{+}$ as in Section \ref{act+}. Similarly, let $%
w_{e-i}^{\prime }$ be the word obtained by writing the addable or removable $%
(e-i)$-nodes of $\underline{\lambda }^{\prime }$ in \textit{increasing}
order with respect to $\prec _{\underline{s}}^{+}$ as in Section \ref{act-}.
Write $w_{i}=\gamma _{1}\cdot \cdot \cdot \gamma _{r}$ where for any $%
m=1,...,r,$ $\gamma _{m}$ is an $i$-node that is addable or removable. Then
by definition of the order $\prec _{\underline{s}}^{+}$ and $\prec _{%
\underline{s}}^{-}$ (see (\ref{def_order+}) and (\ref{def_order-})), we have 
\begin{equation*}
w_{e-i}^{\prime }=\gamma _{1}^{\prime }\cdot \cdot \cdot \gamma _{r}^{\prime
}.
\end{equation*}
Write $\widetilde{w}_{i}=A^{p}R^{q}$ for the word obtained from $w_{i}$ by
the cancellation process of the pairs $RA$ . We deduce from (\ref{equi})
that the word $\widetilde{w}_{e-i}^{\prime }=(A^{\prime })^{p}(R^{\prime
})^{q}$ coincides with the word obtained from $w_{e-i}^{\prime }$ by the
cancellation process of the letters $RA.$ In particular, $\gamma $ is the
good $i$-node for $\underline{\lambda }$ if and only if $\gamma ^{\prime }$
is the good $(e-i)$-node for $\underline{\lambda }^{\prime }$. Hence the
bijection $\xi $ satisfies (\ref{skewiso}).
\end{proof}

\bigskip

Since $V_{e}^{\underline{s}}(\Lambda _{\underline{\frak{s}}}^{+})$ and $%
V_{e}^{\underline{s}^{\ast }}(\Lambda _{\underline{\frak{s}}^{\ast }}^{-})$
are the irreducible components with highest weight vector $\underline{%
\emptyset }$ in $\frak{F}_{e}^{\underline{s},+}$ and $\frak{F}_{e}^{%
\underline{s},-},$ their crystals graphs $B_{e}^{\underline{s}}(\Lambda _{%
\underline{\frak{s}}}^{+})$ and $B_{e}^{\underline{s}^{\ast }}(\Lambda _{%
\underline{\frak{s}}^{\ast }}^{-})$ can be realized as the connected
components of highest weight vertex $\underline{\emptyset }$ in $B_{e}^{%
\underline{s},+}$ and $B_{e}^{\underline{s}^{\ast },-}.$ Since $\xi (%
\underline{\emptyset })=\underline{\emptyset },$ we derive from the previous
proposition the equivalence 
\begin{equation}
\underline{\lambda }\overset{i}{\rightarrow }\underline{\mu }\text{ in }%
B_{e}^{\underline{s}}(\Lambda _{\underline{\frak{s}}}^{+})%
\Longleftrightarrow \xi (\underline{\lambda })\overset{e-i}{\rightarrow }\xi
(\underline{\mu })\text{ in }B_{e}^{\underline{s}^{\ast }}(\Lambda _{%
\underline{\frak{s}}^{\ast }}^{-}).  \label{passage}
\end{equation}

\begin{definition}
The vertices of $B_{e}^{\underline{s}}(\Lambda _{\underline{\frak{s}}}^{+})$
and $B_{e}^{\underline{s}}(\Lambda _{\underline{\frak{s}}}^{-})$ are called
the Uglov multipartitions.
\end{definition}

\begin{example}
\label{exa_restr}Suppose $\ell =1$ and $\underline{s}=(s).$

\begin{itemize}
\item  The vertices of $B_{e}^{\underline{s}}(\Lambda _{\underline{\frak{s}}%
}^{+})$ are the $e$-restricted partitions, that is the partitions $\lambda
=(\lambda _{1},...,\lambda _{p})$ such that $\lambda _{i}-\lambda _{i+1}\leq
e-1$ for any $i=1,...,p-1.$

\item  The vertices of $B_{e}^{\underline{s}}(\Lambda _{\underline{\frak{s}}%
}^{-})$ are the $e$-regular partitions, that is the partitions $\lambda \ $%
with at most $e-1$ parts equal.
\end{itemize}
\end{example}

\noindent\textbf{Remarks:}

\noindent $\mathrm{(i):}$ The crystals $B_{e}^{\underline{s}}(\Lambda _{%
\underline{\frak{s}}}^{+})$ are essentially those used in \cite{ari2}
whereas the crystals $B_{e}^{\underline{s}}(\Lambda _{\underline{\frak{s}}%
}^{-})$ appear in \cite{geck} and \cite{jac}.

\noindent $\mathrm{(ii):}$ Consider $\underline{s}=(s_{0},...,s_{\ell -1})$
and $\underline{s}^{\prime }=(s_{0}^{\prime },...,s_{\ell -1}^{\prime })$
two multicharges such that $s_{k}\equiv s_{k}^{\prime }$ for any $%
k=0,...,\ell -1.$ Then $\Lambda _{\underline{\frak{s}}}^{+}=\Lambda _{%
\underline{\frak{s}}^{\prime }}^{+},$ thus the crystals $B_{e}^{\underline{s}%
}(\Lambda _{\underline{\frak{s}}}^{+})$ and $B_{e}^{\underline{s}^{\prime
}}(\Lambda _{\underline{\frak{s}}}^{+})$ are isomorphic. The combinatorial
description of this isomorphism is complicated in general (but see Section 
\ref{sec_class}).\ Nevertheless, in the case when there exists an integer $d$
such that $s_{k}=s_{k}^{\prime }+de$ for any $k=0,...,\ell -1$, one derives
easily from the description of the crystal structure on $B_{e}^{\underline{s}%
}(\Lambda _{\underline{\frak{s}}}^{+})$ that the relevant isomorphism
coincide with the identity map. The situation is the same for the crystals $%
B_{e}^{\underline{s}}(\Lambda _{\underline{\frak{s}}}^{-})$ and $B_{e}^{%
\underline{s}^{\prime }}(\Lambda _{\underline{\frak{s}}}^{-}).$

\bigskip

When the multicharge $\underline{s}=(s_{0},...,s_{\ell -1})$ satisfies $%
0\leq s_{0}\leq \cdot \cdot \cdot \leq s_{\ell -1}\leq e-1,$ there exists a
combinatorial description of the Uglov multipartitions labelling $B_{e}^{%
\underline{s}}(\Lambda _{\underline{\frak{s}}}^{-})$ due to Foda, Leclerc,
Okado, Thibon and Welsh.

\begin{proposition}
\cite{FLOTW} \cite{jim}\label{prop_FLOTW} When $0\leq s_{0}\leq \cdot \cdot
\cdot \leq s_{\ell -1}\leq e-1,$ the multipartition $\underline{\lambda }%
=(\lambda ^{(0)},...,\lambda ^{(\ell -1)})$ belongs to $B_{e}^{\underline{s}%
}(\Lambda _{\underline{\frak{s}}}^{-})$ if and only if:

\begin{enumerate}
\item  $\underline{\lambda }$ is cylindric, that is, for every $k=0,...,\ell
-2$ we have $\lambda _{i}^{(k)}\geq \lambda _{i+s_{k+1}-s_{k}}^{(k+1)}$ for
all $i>1$ (the partitions are taken with an infinite numbers of empty parts)
and $\lambda _{i}^{(\ell -1)}\geq \lambda _{i+e+s_{0}-s_{\ell -1}}^{(0)}$
for all $i>1$

\item  for all $r>0,$ among the residues appearing at the right ends of the
length $r$ rows of $\underline{\lambda }$, at least one element of $%
\{0,1,...,e-1\}$ does not appear.
\end{enumerate}
\end{proposition}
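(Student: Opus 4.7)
The plan is to identify the set $\mathcal{U}^{\underline{s}}$ of multipartitions satisfying conditions (1) and (2) with the connected component of $\underline{\emptyset}$ in $B_{e}^{\underline{s},-}$, which is by construction $B_{e}^{\underline{s}}(\Lambda_{\underline{\frak{s}}}^{-})$. This reduces the proposition to three steps: \textbf{(a)} the empty multipartition lies in $\mathcal{U}^{\underline{s}}$, which is immediate; \textbf{(b)} $\mathcal{U}^{\underline{s}}$ is stable under every operator $\widetilde{f}_{i}$, $i\in\{0,\ldots,e-1\}$, whenever $\widetilde{f}_{i}(\underline{\lambda})\neq 0$; \textbf{(c)} any nonempty $\underline{\lambda}\in\mathcal{U}^{\underline{s}}$ admits some $\widetilde{e}_{i}$ with $\widetilde{e}_{i}(\underline{\lambda})\neq 0$ and still in $\mathcal{U}^{\underline{s}}$. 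Combined with an induction on the rank $|\underline{\lambda}|$, (a) and (c) give $\mathcal{U}^{\underline{s}}\subseteq B_{e}^{\underline{s}}(\Lambda_{\underline{\frak{s}}}^{-})$, while (a) and (b) give the reverse inclusion.

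For step (b), let $\gamma=(a,b,c)$ be the good $i$-node added by $\widetilde{f}_{i}$ according to the recipe of Section \ref{act-}. One must verify that $\underline{\mu}=\widetilde{f}_{i}(\underline{\lambda})$ remains cylindric and still satisfies the residue condition. The cylindric condition is local and only needs to be re-examined between $\lambda^{(c)}$ and its two cyclic neighbors; if it failed at $\gamma$, one shows that there must exist an addable $i$-node in a neighboring component that is strictly greater in $\prec_{\underline{s}}^{-}$, which would either cancel $\gamma$ in the reduced word $\widetilde{w}_{i}$ or displace it as the rightmost $A$, contradicting $\gamma$ being good. The residue condition tracks the set of residues $c(\gamma')\bmod e$ at the right ends of rows of any given length $r$; adding $\gamma$ modifies at most two such statistics (possibly removing a length-$(b-1)$ row and creating a length-$b$ row). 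The hypothesis $0\leq s_{0}\leq\cdots\leq s_{\ell-1}\leq e-1$ is invoked to prevent $\underline{\mu}$ from creating a row length whose right-end residues exhaust $\mathbb{Z}/e\mathbb{Z}$, again by tracking which addable nodes would outrank $\gamma$ in $\prec_{\underline{s}}^{-}$.

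For step (c), in a nonempty $\underline{\lambda}\in\mathcal{U}^{\underline{s}}$ condition (2) forces, for each realized row length $r$, a residue class $j_{r}$ that does not appear at a length-$r$ right end. Choosing $r$ to be the maximal part and taking $i=(j_{r}+1)\bmod e$, a careful inspection of the $\prec_{\underline{s}}^{-}$-ordered word $w_{i}$ exhibits a removable $i$-node whose removal preserves both the cylindric condition (by a symmetric local argument to the one used in (b)) and the residue condition (since we strictly reduce the set of length-$r$ right-end residues while leaving longer rows alone).

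The main obstacle is the combinatorial verification in (b). Both conditions are genuinely global: the cylindric one chains across all components cyclically, and the residue one ranges over all row lengths simultaneously. One cannot simply inspect the immediate vicinity of $\gamma$ and must instead extract the preservation from the structure of $\widetilde{w}_{i}$ and from the definition of $\prec_{\underline{s}}^{-}$. The hypothesis $0\leq s_{0}\leq\cdots\leq s_{\ell-1}\leq e-1$ is precisely what aligns the priorities of addable and removable $i$-nodes across the different components $\lambda^{(c)}$ with the staircase pattern forced by the cylindric condition, and this alignment is the pivotal point enabling one to trace the effect of the $RA$-cancellations through $\widetilde{w}_{i}$.
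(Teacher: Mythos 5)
Note first that the paper does not prove this proposition at all: it is quoted from Foda--Leclerc--Okado--Thibon--Welsh and Jimbo--Misra--Miwa--Okado, so your argument has to stand entirely on its own, and as it stands it is a plan rather than a proof. The logical skeleton is fine and is indeed the classical route: let $\mathcal{U}^{\underline{s}}$ be the set cut out by (1) and (2), check $\underline{\emptyset}\in\mathcal{U}^{\underline{s}}$, prove stability under the $\widetilde{f}_{i}$, prove that every nonempty element admits some $\widetilde{e}_{i}$ keeping it in the set, and conclude by induction on the rank. But steps (b) and (c) are precisely the mathematical content of the proposition, and you only describe what ``one shows'' there: you never exhibit the competing addable node that is supposed to contradict goodness of $\gamma$ when cylindricity fails in $\underline{\mu}$, you never carry out the bookkeeping showing that the right-end residues of the length-$b$ rows of $\underline{\mu}$ cannot exhaust $\mathbb{Z}/e\mathbb{Z}$, and you never verify that the removable node selected in (c) is actually the good $i$-node, i.e. survives the $RA$-cancellation in $\widetilde{w}_{i}$, nor that its removal preserves (1) and (2). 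Your closing paragraph concedes that this verification is the main obstacle; conceding it is not overcoming it, so there is a genuine gap.

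Moreover, the one concrete recipe you do give, in step (c), is flawed as stated. If $j_{r}$ is a residue missing from the right ends of the length-$r$ rows (with $r$ the maximal part), there is no reason for a removable node of residue $i=(j_{r}+1)\bmod e$ to exist at the end of a length-$r$ row: that residue may simply not occur there either, and then your candidate node does not exist. The standard repair is to choose $i$ so that $i$ does occur among these right-end residues while $i-1$ does not (possible since at least one residue occurs and at least one is missing); then every length-$r$ row ending in residue $i$ is automatically removable, since otherwise the next row would be another length-$r$ row whose right end has residue $i-1$. Even with this correction, one must still check that such a node is good for the order $\prec_{\underline{s}}^{-}$ of Section \ref{act-} and that conditions (1) and (2) persist after its removal, and the analogous global verifications in (b); until those are written out, the proposition remains essentially unproven in your proposal.
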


\noindent\textbf{Remarks:}

\noindent $\mathrm{(i):}$ By using (\ref{passage}), it is easy to deduce a
combinatorial characterization of the Uglov multipartitions appearing in $%
B_{e}^{\underline{s}}(\Lambda _{\underline{\frak{s}}}^{+})$ when $0\leq
s_{\ell -1}\leq \cdot \cdot \cdot \leq s_{0}\leq e-1$. These multipartitions
are called the FLOTW multipartitions.

\noindent $\mathrm{(ii):}$ In the sequel we will essentially consider the
crystals $B_{e}^{\underline{s}}(\Lambda _{\underline{\frak{s}}}^{+})$
because they can be embedded in a natural way in $\mathcal{U}_{v}(\frak{sl}%
_{\infty })$-crystals.$\;$Thanks to (\ref{passage}), it will be easy to
translate our statements to make them compatible with the crystals $B_{e}^{%
\underline{s}}(\Lambda _{\underline{\frak{s}}}^{-})$.

\noindent $\mathrm{(iii):}$ There exists also another realization $%
B_{e}^{a}(\Lambda _{\underline{\frak{s}}}^{+})$ of the abstract crystal $%
B_{e}(\Lambda _{\underline{\frak{s}}}^{+})$ which is in particular used by
Ariki \cite{ari2}.\ This realization is obtained by defining the Fock space
of level $\ell $ as a tensor product of Fock spaces of level $1.\;$It is
suited to the Specht modules theory for Ariki-Koike algebras introduced by
Dipper, James and Mathas \cite{DJM}. Note that in the level $2$ case, Geck 
\cite{geck0} has generalized this theory by defining Specht modules adapted
to the definition of the Fock spaces used in this paper. It is expected that
similar results hold in higher level.

\noindent $\mathrm{(iv):}$ The multipartitions which label the vertices of
the crystals $B_{e}^{a}(\Lambda _{\underline{\frak{s}}}^{+})$ are called the
Kleshchev multipartitions.\ For any nonnegative integer $n,$ the subgraph of
the crystal $B_{e}^{a}(\Lambda _{\underline{\frak{s}}}^{+})$ containing all
the Kleshchev multipartitions of rank $m\leq n$ coincide with the
corresponding subgraph of $B_{e}^{\underline{s}}(\Lambda _{\underline{\frak{s%
}}}^{+})$ when the multicharge $\underline{s}$ satisfies $s_{i}-s_{i+1}>n-1$
for any $i=0,...,\ell -2$. As a consequence Kleshchev multipartitions are
particular cases of Uglov multipartitions.

\section{Embedding of $B_{e}^{\protect\underline{s}}(\Lambda _{\protect%
\underline{\frak{s}}}^{+})$ in $B_{\infty }^{\protect\underline{s}^{\Diamond
}}(\protect\omega _{\protect\underline{s}})$}

The aim of this section is to show that we have an embedding of crystals
from $B_{e}^{\underline{s}}(\Lambda _{\underline{\frak{s}}}^{+})$ to $%
B_{\infty }^{\underline{s}^{\Diamond }}(\omega _{\underline{s}})$ with $%
\underline{s}^{\Diamond }=(s_{\ell -1},...,s_{0}).$

\subsection{Embedding of $B_{e}^{\protect\underline{s}}(\Lambda _{\protect%
\underline{\frak{s}}}^{+})_{\leq n}$ in $B_{f}^{\protect\underline{s}%
}(\Lambda _{\protect\underline{\frak{s}}}^{+})_{\leq n}$}

In the sequel we denote by $\widetilde{E}_{i}$ and $\widetilde{F}_{i},$ $%
i=0,...,e-1$ the crystal operators corresponding to the $\mathcal{U}_{v}^{+}(%
\widehat{\frak{sl}}_{e})$-crystals. Consider $\underline{\lambda }$ and $%
\underline{\mu }$ two multipartitions in $\frak{F}_{e}^{\underline{s},+}$
and $i\in \{0,...,e-1\}$ such that $\widetilde{F}_{i}(\underline{\lambda })=%
\underline{\mu }.$ Then $\underline{\mu }$ is obtained by adding an $i$-node 
$\gamma $ in $\underline{\lambda }.\;$Set $j=c(\gamma ).$ In the sequel we
slightly abuse the notation and write $\widetilde{F}_{j}(\underline{\lambda }%
)=\underline{\mu }$ although $j$ does not belong to $\{0,...,e-1\}$ in
general. Note that $j$ is uniquely determined from the equality $\widetilde{F%
}_{i}(\underline{\lambda })=\underline{\mu }.$ Observe that with this
convention, an arrow in $\frak{F}_{e}^{\underline{s},+}$ can be labelled by
any integer.\ To recover the original labelling of a $\mathcal{U}_{v}^{+}(%
\widehat{\frak{sl}}_{e})$-crystal, it suffices to read these labels modulo $e
$. With this convention, when there exists an arrow from the multipartition $%
\underline{\lambda }$ to the multipartition $\underline{\mu }$ in \textit{%
both }$F_{e}^{\underline{s},+}$\textit{\ and }$F_{f}^{\underline{s},+}$
(with $e\neq f)$, this arrows can be pictured $\underline{\lambda }\overset{j%
}{\rightarrow }\underline{\mu }$ where $j$ is the content of the node added
to $\underline{\lambda }$ to obtain $\underline{\mu }$. In particular the
label so obtained is independent of $e$ and $f$ and it makes sense to write $%
\widetilde{F}_{j}(\underline{\lambda })=\underline{\mu }$ in $B_{e}^{%
\underline{s}}(\Lambda _{\underline{\frak{s}}}^{+})$ and $B_{f}^{\underline{s%
}}(\Lambda _{\underline{\frak{s}}}^{+})$.

\noindent For any multicharge $\underline{s}=(s_{0},...,s_{\ell -1})$ in $%
\mathbb{Z}^{\ell },$ we set $\left\| \underline{s}\right\| =\max \{\left|
s_{k}\right| \mid k=0,...,\ell -1\}$ where $\left| s_{k}\right| =s_{k}$ if $%
s_{k}$ is nonnegative and $\left| s_{k}\right| =-s_{k}$ otherwise. The
following proposition is a generalization in level $\ell $ of Proposition
4.1 in \cite{jac}.

\begin{proposition}
\label{embed(e,f)}Let $n$ be a nonnegative integer. Consider $\underline{%
\lambda }$ a multipartition in $B_{e}^{\underline{s}}(\Lambda _{\underline{%
\frak{s}}}^{+})$ such that $\underline{\lambda }=\widetilde{F}_{j_{1}}\cdot
\cdot \cdot \widetilde{F}_{j_{n}}(\underline{\emptyset }).$ Then $\left| 
\underline{\lambda }\right| =n$. Moreover, for any integer $f\geq n+\left\| 
\underline{s}\right\| ,$ we have $\underline{\lambda }=\widetilde{F}%
_{j_{1}}\cdot \cdot \cdot \widetilde{F}_{j_{n}}(\underline{\emptyset })$ in $%
B_{f}^{\underline{s}}(\Lambda _{\underline{\frak{s}}}^{+})$.
\end{proposition}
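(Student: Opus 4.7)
The first assertion is immediate by induction on $n$: in the Fock space $\frak{F}_e^{\underline{s},+}$, each crystal operator $\widetilde{F}_{j_k}$ either annihilates the vector or adds exactly one box to the current multipartition, so $n$ successive applications to $\underline{\emptyset}$ produce a multipartition of size $n$.

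For the second assertion I would proceed by induction on $k \in \{0,1,\dots,n\}$, showing that $\underline{\mu}_k := \widetilde{F}_{j_1}\cdots \widetilde{F}_{j_k}(\underline{\emptyset})$ is the same multipartition when computed in $B_e^{\underline{s}}(\Lambda_{\underline{\frak{s}}}^{+})$ and in $B_f^{\underline{s}}(\Lambda_{\underline{\frak{s}}}^{+})$. The base case $k=0$ is trivial. For the inductive step, with $\underline{\mu}=\underline{\mu}_{k-1}$ (of size $k-1\le n-1$), it suffices to show that the crystal operator $\widetilde{F}_{j_k}$ in $B_f$ selects the same addable node of content $j_k$ as it does in $B_e$.

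The main tool is a content estimate: for a multipartition of size at most $n-1$ with multicharge $\underline{s}$, any addable or removable node $(a,b,c)$ satisfies $|b-a| \le n-1$ (via $\lambda_1^{(c)} + \ell(\lambda^{(c)}) \le |\lambda^{(c)}|+1$ applied to the $c$-th component) together with $|s_c| \le \|\underline{s}\|$, so its content lies in $[-(n-1)-\|\underline{s}\|,\,(n-1)+\|\underline{s}\|]$. Under the hypothesis $f \ge n+\|\underline{s}\|$, this confined content range controls how the residue class of $j_k$ modulo $e$ compares to the residue class modulo $f$, and in particular ensures that the content-blocks entering the $B_f$-signature are tightly constrained around $W_{j_k}$.

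The combinatorial heart of the proof compares the signature words defining $\widetilde{F}_{j_k}$ in the two crystals. In both cases, the signature concatenates in decreasing $\prec_{\underline{s}}^{+}$-order (as in Section \ref{act+}) the content-blocks $W_c$ formed by the addable/removable nodes of content $c$, the $c$ running over the appropriate residue class; both signatures contain the block $W_{j_k}$, and by hypothesis the good node of $B_e$ lies in $W_{j_k}$. A stack-based analysis of the $RA$-cancellation---regarding $R$'s as open parentheses and $A$'s as close parentheses---shows that the unmatched $R$'s produced by earlier blocks in the concatenation can only match (and so cancel) the leftmost unmatched $A$'s of any given later block, while blocks processed afterwards can never affect earlier $A$'s; hence the rightmost unmatched $A$ of $W_{j_k}$ considered in isolation is preserved in both the $B_e$- and the $B_f$-concatenation whenever the overall good node remains inside $W_{j_k}$. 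This identifies the two good nodes, closing the induction. The main obstacle is this stack-monotonicity argument at the boundaries between content-blocks; once it is made precise, the agreement of the two signatures on the selected node follows.
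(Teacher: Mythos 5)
Your first claim and the general architecture of your plan (induction along the path, a content estimate for the addable and removable nodes, comparison of the two signature words, analysis of the $RA$-cancellation) are the same as in the paper's proof of Proposition \ref{embed(e,f)}. Your stack observation is itself correct: an $A$ can only be cancelled by an $R$ standing to its left, so \emph{if} the good node of the full word lies in a block $W$, then it is the rightmost surviving $A$ of $W$ reduced in isolation. The gap is that for the word computed modulo $f$ you never establish the premise of this observation. Your conclusion is stated ``whenever the overall good node remains inside $W_{j_k}$'', but for the $B_f$-signature this is exactly what has to be proved, and nothing in the proposal discharges it: your own estimate only confines the contents to an interval of length roughly $2(n-1+\Vert\underline{s}\Vert )$, which in general exceeds $f$, so the class of $j_k$ modulo $f$ may contain a second content $j_k\pm f$ realized by addable or removable nodes. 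A block of smaller content sits to the right of $W_{j_k}$ in the decreasing reading and can supply an unmatched $A$ to the right of $\gamma _0$ (stealing the good node), while a block of larger content can supply unmatched $R$'s cancelling $\gamma _0$. The paper closes precisely this point: it deduces from (\ref{c(gamma)}) that every addable or removable node in the class of $j_k$ modulo $f$ has content equal to $j_k$, so that $w_{j_k}^{f}$ is the single block $W_{j_k}$ and a factor of the $B_e$-word; only after this collapse does the cancellation argument (essentially your stack lemma) identify the two good nodes. That collapsing statement, not the stack analysis, is where the hypothesis $f\geq n+\Vert \underline{s}\Vert$ is used, and it is absent from your proposal. (A minor point: with the usual right-to-left composition your partial products should be the suffixes $\widetilde{F}_{j_{k}}\cdots \widetilde{F}_{j_{n}}(\underline{\emptyset })$, not prefixes.)

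Be aware also that this missing step is genuinely delicate, so it cannot be waved through with ``tightly constrained''. Take $\ell =1$, $\underline{s}=(0)$, $e=2$ and $\underline{\lambda }=(2,1)$, so $n=3$ and $n+\Vert \underline{s}\Vert =3$: following the paper's rule one finds $(2,1)=\widetilde{F}_{1}\widetilde{F}_{-1}\widetilde{F}_{0}(\underline{\emptyset })$ for $e=2$, while in the partition $(1,1)$ the two addable nodes have contents $1$ and $-2$, which are congruent modulo $3$; the residue-$1$ word for $f=3$ is then $AA$ with rightmost $A$ the content $-2$ node, so that $\widetilde{F}_{1}\widetilde{F}_{-1}\widetilde{F}_{0}(\underline{\emptyset })=(1,1,1)$ when $f=3$. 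Thus at the boundary value $f=n+\Vert \underline{s}\Vert$ the block-collapse can fail, and one really needs $f$ large enough that the whole content window $[\min_{c}s_{c}-(n-1),\max_{c}s_{c}+(n-1)]$ meets each class modulo $f$ at most once (for instance $f\geq 2n-1+\max_{c}s_{c}-\min_{c}s_{c}$); the inequality (\ref{c(gamma)}) as exploited in the paper is itself too optimistic on this point. So your instinct that the interaction between content blocks is the main obstacle is well founded, but the proposal leaves precisely that obstacle unresolved.
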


\begin{proof}
Since each crystal operator $\widetilde{F}_{j_{k}}$ adds a node on the
multipartition $\widetilde{F}_{j_{k+1}}\cdot \cdot \cdot \widetilde{F}%
_{j_{n}}(\underline{\emptyset })$, we have $\left| \lambda \right| =n$.

To prove the second assertion of our proposition, we proceed by induction on 
$n.\;$When $n=0,$ the proposition is immediate. Suppose our statement true
for any multipartition $\underline{\lambda }$ with $\left| \underline{%
\lambda }\right| =n-1.\;$Consider $\underline{\mu }\in B_{e}^{\underline{s}%
}(\Lambda _{\underline{\frak{s}}}^{+})$ with $\left| \underline{\mu }\right|
=n$.

\noindent With the above convention, there exist an integer $j$ and a
multipartition $\underline{\lambda }\in B_{e}^{\underline{s}}(\Lambda _{%
\underline{\frak{s}}}^{+})$ such that $\widetilde{F}_{j}(\underline{\lambda }%
)=\underline{\mu }$ and $\underline{\mu }$ is obtained by adding a node with
content $j$ to $\underline{\lambda }.\;$Let $i\in \{0,...,e-1\}$ be such
that $i\equiv j(\text{mod }e).$ Write $w_{i}^{e}$ for the word obtained by
writing the addable or removable $i$-nodes of $\underline{\lambda }$ in 
\textit{decreasing} order with respect to $\prec _{\underline{s}}^{+}$ as in
Section \ref{act+}.

\noindent Choose any integer $f\geq n+ \left\| \underline{s}\right\| .$
Observe that each $j$-node $\gamma =(a,b,c)$ in $\underline{\lambda }$
verifies $c(\gamma )=\mathrm{res}(\gamma )$ when its residue is computed
modulo $f$. 
Indeed, we have $c(\gamma )=b-a+s_{c}$ and thus 
\begin{equation}
-f<1-n+s_{c}\leq c(\gamma )\leq n-1+s_{c}<f.  \label{c(gamma)}
\end{equation}
Write $w_{j}^{f}$ for the word obtained by writing the addable or removable $%
j$-nodes of $\underline{\lambda }$ in \textit{decreasing} order with respect
to $\prec _{\underline{s}}^{+}$. Then the nodes contributing to $w_{j}^{f}$
are the addable and removable nodes of $\underline{\lambda }$ with content $%
j.$ This implies that $w_{j}^{f}$ is a subword of $w_{i}^{e},$ that is each
node appearing in $w_{j}^{f}$ also appears in $w_{i}^{e}.$ If $\gamma $ is a
node in $w_{i}^{e}$ which does not belong to $w_{j}^{f},$ we must thus have $%
c(\gamma )\neq j.\;$Since the nodes of $w_{i}^{e}$ are ordered according to $%
\prec _{\underline{s}}^{+},$ we deduce that $w_{j}^{f}$ is a factor of $%
w_{i}^{e}$ (i.e., there is no node $\gamma $ such that $c(\gamma )\neq j$
between two nodes of $w_{j}^{f}$).

\noindent Write $\widehat{w}_{i}^{e}$ and $\widehat{w}_{j}^{f}$ respectively
for the words obtained from $w_{i}^{e}$ and $w_{j}^{f}$ by the cancellation
process of the factors $RA$. Since these words do not depend on the order of
the consecutive factors $RA$ which are deleted, $\widehat{w}_{j}^{e}$ is a
factor of $\widehat{w}_{i}^{f}.$ Let $\gamma _{0}$ be the good $i$-node in $%
\widehat{w}_{i}^{e}.$ Suppose that $\gamma _{0}$ is not a node contributing
to $\widehat{w}_{j}^{f}.$ Then, there exists a node $\delta _{0}$ in $%
w_{j}^{f}$ which can be paired with $\gamma _{0}$ to produce a factor $RA.\;$%
In this case, $\gamma _{0}$ and $\delta _{0}$ are nodes of $w_{i}^{e}$
because $w_{j}^{f}$ is a factor of $w_{i}^{e}.$ This gives a contradiction.\
Indeed, the word $w_{i}^{e}$ does not depend on the cancellation process for
the factors $RA.$ So, if we can pair $\gamma _{0}$ and $\delta _{0}$
together in order to obtain a factor $RA$ in $w_{j}^{f},$ we can do the same
pairing in $w_{i}^{e}$ (for $w_{j}^{f}$ is a factor of $w_{i}^{e}$) and $%
\gamma _{0}$ cannot be the good $i$-node in $\widehat{w}_{i}^{e}.$ Hence we
have shown that $\gamma _{0}$ is a $j$-node of $\widehat{w}_{j}^{f}.\;$Since 
$\widehat{w}_{j}^{f}$ is a factor of $\widehat{w}_{i}^{e},$ $\gamma _{0}$ is
the rightmost addable $j$-node in $\widehat{w}_{j}^{f},$ thus is the good $j$%
-node for $w_{j}^{f}$. By the induction hypothesis, one has $\underline{%
\lambda }=\widetilde{F}_{j_{1}}\cdot \cdot \cdot \widetilde{F}_{j_{n}}(%
\underline{\emptyset })$ in $B_{e}^{\underline{s}}(\Lambda _{\underline{%
\frak{s}}}^{+})$ and $B_{f}^{\underline{s}}(\Lambda _{\underline{\frak{s}}%
}^{+})$ because $f\geq n+\left\| \underline{s}\right\| >n-1+\left\| 
\underline{s}\right\| .$ Moreover, we have $\widetilde{F}_{j}(\underline{%
\lambda })=\underline{\mu }$ in $B_{e}^{\underline{s}}(\Lambda _{\underline{%
\frak{s}}}^{+})$ and $B_{f}^{\underline{s}}(\Lambda _{\underline{\frak{s}}%
}^{+})$ by the previous arguments. Thus $\underline{\mu }=\widetilde{F}_{j}%
\widetilde{F}_{j_{1}}\cdot \cdot \cdot \widetilde{F}_{j_{n}}(\underline{%
\emptyset })$ in $B_{e}^{\underline{s}}(\Lambda _{\underline{\frak{s}}}^{+})$
and $B_{f}^{\underline{s}}(\Lambda _{\underline{\frak{s}}}^{+}).$
\end{proof}

\bigskip

\noindent For any fixed nonnegative integer $n$, write $B_{e}^{\underline{s}%
}(\Lambda _{\underline{\frak{s}}}^{+})_{\leq n}=\{\underline{\lambda }\in
B_{e}^{\underline{s}}(\Lambda _{\underline{\frak{s}}}^{+})\mid \left| 
\underline{\lambda }\right| \leq n\}.$ We deduce from the above proposition,
that the identity map $\iota _{e,f}:\underline{\lambda }\longmapsto 
\underline{\lambda }$ from $B_{e}^{\underline{s}}(\Lambda _{\underline{\frak{%
s}}}^{+})_{\leq n}$ to $B_{f}^{\underline{s}}(\Lambda _{\underline{\frak{s}}%
}^{+})_{\leq n}$ is an embedding of crystals.

\subsection{Embedding of $B_{e}^{\protect\underline{s}}(\Lambda _{\protect%
\underline{\frak{s}}}^{+})$ in $B_{\infty }^{\protect\underline{s}^{\Diamond
}}(\protect\omega _{\protect\underline{s}})$}

For any multipartition $\underline{\lambda }=(\lambda ^{(0)},...,\lambda
^{(\ell -1)})$ we write $\underline{\lambda }^{\Diamond }=(\lambda ^{(\ell
-1)},...,\lambda ^{(0)}).$

\begin{proposition}
\label{embed(f,inf)}Consider a multicharge $\underline{s}$ and $f,n$ two
nonnegative integers such that $f\geq n+\left\| s\right\| .\;$Let $%
\underline{\lambda }$ be a multipartition in $B_{f}^{\underline{s}}(\Lambda
_{\underline{\frak{s}}}^{+})$ with $\left| \underline{\lambda }\right| =n.\;$%
Suppose $\underline{\lambda }=\widetilde{F}_{j_{1}}\cdot \cdot \cdot 
\widetilde{F}_{j_{n}}(\underline{\emptyset })$ in $B_{f}^{\underline{s}%
}(\Lambda _{\underline{\frak{s}}}^{+})$. Then we have $\underline{\lambda }%
^{\Diamond }=\widetilde{f}_{j_{1}}\cdot \cdot \cdot \widetilde{f}_{j_{n}}(%
\underline{\emptyset })$ in the $\mathcal{U}_{v}(\frak{sl}_{\infty })$%
-crystal $B_{\infty }^{\underline{s}^{\Diamond }}(\omega _{\underline{s}})$.
\end{proposition}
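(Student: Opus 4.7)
The plan is to argue by induction on $n = |\underline{\lambda}|$. The base case $n = 0$ is immediate, since $\underline{\emptyset}^{\Diamond} = \underline{\emptyset}$ is the highest weight vertex of both crystals. For the inductive step I would write $\underline{\lambda} = \widetilde{F}_{j_1}(\underline{\nu})$ with $\underline{\nu} = \widetilde{F}_{j_2} \cdots \widetilde{F}_{j_n}(\underline{\emptyset})$ of rank $n-1$. Since $f \geq n + \left\|\underline{s}\right\| > (n-1) + \left\|\underline{s}\right\|$, the induction hypothesis applies to $\underline{\nu}$ and gives $\underline{\nu}^{\Diamond} = \widetilde{f}_{j_2} \cdots \widetilde{f}_{j_n}(\underline{\emptyset})$ in $B_{\infty}^{\underline{s}^{\Diamond}}(\omega_{\underline{s}})$, so everything reduces to proving the single-step identity $\widetilde{f}_{j_1}(\underline{\nu}^{\Diamond}) = \underline{\lambda}^{\Diamond}$.

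For this I would compare the two relevant $j_1$-signatures directly. In $B_f^{\underline{s}}(\Lambda_{\underline{\frak{s}}}^+)$, the word $w$ records the addable and removable nodes of $\underline{\nu}$ of residue $\equiv j_1 \pmod{f}$ in $\prec_{\underline{s}}^+$-decreasing order; by (\ref{def_order+}) it decomposes naturally as a concatenation of content-blocks $B_1 B_2 \cdots B_m$ indexed by the distinct contents $c_1 > c_2 > \cdots > c_m$ occurring that are congruent to $j_1$ modulo $f$, each block listing its same-content nodes in decreasing component index. In $B_{\infty}^{\underline{s}^{\Diamond}}(\omega_{\underline{s}})$, the word $W$ reads the content-$j_1$ addable and removable nodes of $\underline{\nu}^{\Diamond}$ through its components $0,1,\ldots,\ell-1$, that is, through $\nu^{(\ell-1)},\ldots,\nu^{(0)}$. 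Under the natural bijection $(a,b,c) \leftrightarrow (a,b,\ell-1-c)$, which preserves both the content and the addable/removable status of a node, the block $B_{i_0}$ of content exactly $j_1$ and the word $W$ therefore coincide letter for letter.

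The remaining issue is to show that the good $A$ (rightmost $A$ after $RA$-cancellation) of the full word $w$ coincides with the good $A$ of $B_{i_0}$ alone; equivalently, the extra blocks corresponding to contents $j_1 \pm f, j_1 \pm 2f,\ldots$ must not perturb which $A$ emerges as the rightmost. The hypothesis $\widetilde{F}_{j_1}(\underline{\nu}) = \underline{\lambda}$ forces the good $A$ of $\widehat{w}$ to lie in $B_{i_0}$, and a stack-model analysis of the cancellation then yields: (i) the blocks $B_1,\ldots,B_{i_0-1}$ only contribute $R$'s that absorb a prefix of the locally reduced $B_{i_0} = A^{p_0}R^{q_0}$, leaving its rightmost (hence $p_0$-th) surviving $A$ intact; (ii) no $A$ in $B_{i_0+1},\ldots,B_m$ can be pushed above the surviving $A$'s of $B_{i_0}$, for such an $A$ would then be the global rightmost $A$ and would have content strictly less than $j_1$, contradicting the hypothesis. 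Consequently the good $A$ in $B_f^{\underline{s}}(\Lambda_{\underline{\frak{s}}}^+)$ and in $B_{\infty}^{\underline{s}^{\Diamond}}(\omega_{\underline{s}})$ select the same node (via the $\Diamond$ bijection), so $\widetilde{f}_{j_1}(\underline{\nu}^{\Diamond}) = \underline{\lambda}^{\Diamond}$, closing the induction. The main obstacle is precisely this stack-model argument in (i)-(ii): one must track cross-block cancellations carefully, making essential use of the constraint that the globally rightmost $A$ sits in $B_{i_0}$.
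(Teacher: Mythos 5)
Your proposal is correct, and it follows the paper's overall skeleton (induction on $n$, reduction to a single-step comparison of $j_1$-signatures, the observation that by (\ref{def_order+}) the decreasing $\prec_{\underline{s}}^{+}$-reading of same-content nodes runs through $\lambda^{(\ell-1)},\ldots,\lambda^{(0)}$ and hence matches the reading of $\underline{\lambda}^{\Diamond}$), but the crucial step is handled by a genuinely different mechanism. The paper uses the bound $f\geq n+\left\| \underline{s}\right\|$ at exactly this point: invoking (\ref{c(gamma)}) it asserts that every addable or removable node whose residue modulo $f$ equals that of $j_1$ has content exactly $j_1$, so that the $B_{f}$-word $w_{j_1}^{f}$ and the $A_{\infty}$-word $W_{j_1}$ are literally the same word and the good nodes coincide with no further work. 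You instead allow extra content classes $j_1+kf$, split $w_{j_1}^{f}$ into content blocks, identify the content-$j_1$ block with $W_{j_1}$, and use the $RA$-cancellation together with the hypothesis (the added node has content $j_1$, so the global rightmost $A$ lies in that block) to conclude that the surviving rightmost $A$ is the rightmost $A$ of the block alone; this is the factor-plus-cancellation style of argument the paper deploys for Proposition \ref{embed(e,f)} rather than here. What your route buys is robustness: it never uses the size of $f$ in the one-step comparison, and it is not vacuous even under the stated hypothesis, since at the boundary $f=n+\left\| \underline{s}\right\|$ residue collisions between contents $j_1$ and $j_1\pm f$ can actually occur (for instance $\ell=1$, $s=0$, $n=f=3$, $\underline{\nu}=(2)$ has addable nodes of contents $2$ and $-1$, which the paper's ``residue equals content'' shortcut glosses over); what it costs is the cancellation bookkeeping, which the paper's one-line use of the bound avoids. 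Two points to tighten: in (i) you must invoke the hypothesis exactly as in (ii), to exclude the case where the $R$'s coming from blocks of larger content absorb \emph{all} the $A$'s of the content-$j_1$ block (otherwise the global rightmost $A$ would lie in a block of content $>j_1$ or not exist at all); and the stack facts you appeal to (cross-block cancellation removes the leftmost $A$'s of a block's internally reduced run, so the survivors form a suffix containing its rightmost $A$) should be stated and checked explicitly, though they are standard.
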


\begin{proof}
We proceed by induction on $n.\;$When $n=0,$ the proposition is immediate.
Suppose our statement true for any multipartition $\underline{\lambda }$
with $\left| \underline{\lambda }\right| =n.\;$Consider $\underline{\mu }\in
B_{f}^{\underline{s}}(\Lambda _{\underline{\frak{s}}}^{+})$ with $\left| 
\underline{\mu }\right| =n$.\ There exists an integer $j$ and a
multipartition $\underline{\lambda }\in B_{e}^{\underline{s}}(\Lambda _{%
\underline{\frak{s}}}^{+})$ such that $\widetilde{F}_{j}(\underline{\lambda }%
)=\underline{\mu }$ and $\underline{\mu }$ is obtained by adding a node with
content $j$ to $\underline{\lambda }.\;$Since $f\geq n+\left\| s\right\| ,$
we have $\mathrm{res}(\gamma )=c(\gamma )$ for each node $\gamma $ in $%
\underline{\lambda }$ as in (\ref{c(gamma)}).\ Hence the word $w_{j}^{f}$
obtained by writing the addable or removable $j$-nodes of $\underline{%
\lambda }$ in \textit{decreasing} order with respect to $\prec _{\underline{s%
}}^{+}$ contains exactly the addable and removable nodes of $\underline{%
\lambda }$ with content equal to $j.\;$Set $\underline{\lambda }=(\lambda
^{(0)},...,\lambda ^{(\ell -1)}).\;$By definition of the order $\prec _{%
\underline{s}}^{+}$ (see \ref{def_order+}), this means that $w_{j}^{f}$ is
the word obtained by reading the addable and removable nodes with content
equal to $j$ successively in the partitions $\lambda ^{(\ell -1)},\lambda
^{(\ell -2)},...,\lambda ^{(0)}$ of $\underline{\lambda }$.\ Moreover each
partition $\lambda ^{(c)},$ $c=0,...,\ell -1$ contains at most one addable
or removable node with content $j$ because the nodes with the same content
in $\lambda ^{(c)}$ must belong to the same diagonal.\ By the induction
hypothesis, we know that $\underline{\lambda }=\widetilde{F}_{j_{1}}\cdot
\cdot \cdot \widetilde{F}_{j_{n}}(\underline{\emptyset })$ in $B_{f}^{%
\underline{s}}(\Lambda _{\underline{\frak{s}}}^{+})$ and $\underline{\lambda 
}^{\Diamond }=\widetilde{f}_{j_{1}}\cdot \cdot \cdot \widetilde{f}_{j_{n}}(%
\underline{\emptyset })$ in $B_{\infty }^{\underline{s}^{\Diamond }}(\omega
_{\underline{s}}).\;$The previous arguments show that the word $w_{j}^{f}$
coincide with the word $W_{j}$ obtained by reading the addable and removable
nodes of content $j$ in $\underline{\lambda }^{\Diamond }$ as described in
Section \ref{subsecRA} just before (\ref{actinfinity}). Thus we obtain $%
\underline{\mu }=\widetilde{F}_{j}\widetilde{F}_{j_{1}}\cdot \cdot \cdot 
\widetilde{F}_{j_{n}}(\underline{\emptyset })$ in $B_{f}^{\underline{s}%
}(\Lambda _{\underline{\frak{s}}}^{+})$ and $\underline{\mu }^{\Diamond }=%
\widetilde{f}_{j}\widetilde{f}_{j_{1}}\cdot \cdot \cdot \widetilde{f}%
_{j_{n}}(\underline{\emptyset })$ in $B_{\infty }^{\underline{s}^{\Diamond
}}(\omega _{\underline{s}}).$
\end{proof}

\bigskip

\noindent Set $B_{\infty }^{\underline{s}^{\Diamond }}(\omega _{\underline{s}%
})_{\leq n}=\{\underline{\lambda }\in B_{\infty }^{\underline{s}^{\Diamond
}}(\omega _{\underline{s}})\mid \left| \underline{\lambda }\right| \leq n\}.$
We deduce from the previous proposition that the map 
\begin{equation*}
\varphi _{f,\infty }^{(n)}:\left\{ 
\begin{array}{l}
B_{e}^{\underline{s}}(\Lambda _{\underline{\frak{s}}}^{+})_{\leq
n}\rightarrow B_{\infty }^{\underline{s}^{\Diamond }}(\omega _{\underline{s}%
})_{\leq n} \\ 
\underline{\lambda }=(\lambda ^{(0)},...,\lambda ^{(\ell -1)})\longmapsto 
\underline{\lambda }^{\Diamond }=\lambda ^{(\ell -1)}\otimes \cdot \cdot
\cdot \otimes \lambda ^{(0)}
\end{array}
\right. 
\end{equation*}
is an embedding of crystals for any $f\geq n+\left\| s\right\| .$

\begin{theorem}
\label{th_embedd}Given any positive integer $e$ and any multicharge $%
\underline{s}$ the map 
\begin{equation*}
\varphi _{e,\infty }:\left\{ 
\begin{array}{l}
B_{e}^{\underline{s}}(\Lambda _{\underline{\frak{s}}}^{+})\rightarrow
B_{\infty }^{\underline{s}^{\Diamond }}(\omega _{\underline{s}}) \\ 
\underline{\lambda }=(\lambda ^{(0)},...,\lambda ^{(\ell -1)})\longmapsto 
\underline{\lambda }^{\Diamond }=\lambda ^{(\ell -1)}\otimes \cdot \cdot
\cdot \otimes \lambda ^{(0)}
\end{array}
\right. 
\end{equation*}
is an embedding of crystals : for any $\underline{\lambda }\in B_{e}^{%
\underline{s}}(\Lambda _{\underline{\frak{s}}}^{+})$ we have 
\begin{equation*}
\underline{\lambda }=\widetilde{F}_{j_{1}}\cdot \cdot \cdot \widetilde{F}%
_{j_{r}}(\underline{\emptyset })\text{ in }B_{e}^{\underline{s}}(\Lambda _{%
\underline{\frak{s}}}^{+})\Longrightarrow \underline{\lambda }^{\Diamond }=%
\widetilde{f}_{j_{1}}\cdot \cdot \cdot \widetilde{f}_{j_{r}}(\underline{%
\emptyset })\text{ in }B_{\infty }^{\underline{s}^{\Diamond }}(\omega _{%
\underline{s}}).
\end{equation*}
\end{theorem}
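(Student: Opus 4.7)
The plan is to obtain Theorem \ref{th_embedd} as a direct consequence of Propositions \ref{embed(e,f)} and \ref{embed(f,inf)}, by reducing the statement to a fixed but arbitrarily large level of truncation and then ``stabilizing'' $f$.

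Given $\underline{\lambda}\in B_{e}^{\underline{s}}(\Lambda_{\underline{\frak{s}}}^{+})$, set $r=|\underline{\lambda}|$ and choose a sequence of crystal operators $\widetilde{F}_{j_{1}}\cdots\widetilde{F}_{j_{r}}$ such that $\underline{\lambda}=\widetilde{F}_{j_{1}}\cdots\widetilde{F}_{j_{r}}(\underline{\emptyset})$ in $B_{e}^{\underline{s}}(\Lambda_{\underline{\frak{s}}}^{+})$, where the labels $j_{k}$ record the content of the added node (as allowed by the convention introduced before Proposition \ref{embed(e,f)}). Fix any integer $f\geq r+\|\underline{s}\|$. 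By Proposition \ref{embed(e,f)}, the identity map $\iota_{e,f}$ restricted to $B_{e}^{\underline{s}}(\Lambda_{\underline{\frak{s}}}^{+})_{\leq r}$ is an embedding into $B_{f}^{\underline{s}}(\Lambda_{\underline{\frak{s}}}^{+})_{\leq r}$, so we still have $\underline{\lambda}=\widetilde{F}_{j_{1}}\cdots\widetilde{F}_{j_{r}}(\underline{\emptyset})$ in $B_{f}^{\underline{s}}(\Lambda_{\underline{\frak{s}}}^{+})$. Since $f\geq r+\|\underline{s}\|$, Proposition \ref{embed(f,inf)} then applies and yields $\underline{\lambda}^{\Diamond}=\widetilde{f}_{j_{1}}\cdots\widetilde{f}_{j_{r}}(\underline{\emptyset})$ in $B_{\infty}^{\underline{s}^{\Diamond}}(\omega_{\underline{s}})$. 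This is precisely the implication asserted in the theorem.

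Viewed globally, the map $\varphi_{e,\infty}$ is the union of the embeddings $\varphi_{f,\infty}^{(n)}\circ\iota_{e,f}\colon B_{e}^{\underline{s}}(\Lambda_{\underline{\frak{s}}}^{+})_{\leq n}\longrightarrow B_{\infty}^{\underline{s}^{\Diamond}}(\omega_{\underline{s}})_{\leq n}$, taken over all $n$, with $f$ chosen at each step to satisfy $f\geq n+\|\underline{s}\|$. I would briefly note that this is well-defined: the value of $\varphi_{e,\infty}(\underline{\lambda})$ is obtained purely by reversing the components of $\underline{\lambda}$, so it does not depend on the auxiliary integer $f$ used to produce it; and Proposition \ref{embed(f,inf)} guarantees that the crystal structure is preserved uniformly for every $n$. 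Injectivity on the level of vertices is trivial since the map is simply $\underline{\lambda}\mapsto\underline{\lambda}^{\Diamond}$.

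The only subtle point is ensuring that the label $j_{k}$ of an arrow in $B_{e}^{\underline{s}}(\Lambda_{\underline{\frak{s}}}^{+})$ really matches the label $j_{k}$ of the corresponding arrow in $B_{\infty}^{\underline{s}^{\Diamond}}(\omega_{\underline{s}})$; this is exactly the content of the intermediate step through $B_{f}^{\underline{s}}(\Lambda_{\underline{\frak{s}}}^{+})$ for $f$ large enough, since inequality (\ref{c(gamma)}) forces the residue modulo $f$ of each node involved to coincide with its content. Thus no genuine obstacle remains: the proof reduces to invoking the two preceding propositions in succession.
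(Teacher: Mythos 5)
Your proposal is correct and follows essentially the same route as the paper: pass from $B_{e}^{\underline{s}}(\Lambda _{\underline{\frak{s}}}^{+})$ to $B_{f}^{\underline{s}}(\Lambda _{\underline{\frak{s}}}^{+})$ with $f\geq r+\left\| \underline{s}\right\|$ via Proposition \ref{embed(e,f)}, then to $B_{\infty }^{\underline{s}^{\Diamond }}(\omega _{\underline{s}})$ via Proposition \ref{embed(f,inf)}, and conclude with the obvious injectivity of $\underline{\lambda }\mapsto \underline{\lambda }^{\Diamond }$. Your added remarks on independence of the auxiliary $f$ and on the matching of arrow labels are consistent with the paper's conventions and do not change the argument.
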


\begin{proof}
Consider $\underline{\lambda }=(\lambda ^{(0)},...,\lambda ^{(\ell -1)})\in
B_{e}^{\underline{s}}(\Lambda _{\underline{\frak{s}}}^{+})$ such that $%
\underline{\lambda }=\widetilde{F}_{j_{1}}\cdot \cdot \cdot \widetilde{F}%
_{j_{r}}(\underline{\emptyset })$ and set $\left| \underline{\lambda }%
\right| =n.\;$By Proposition \ref{embed(e,f)} we have $\underline{\lambda }=%
\widetilde{F}_{j_{1}}\cdot \cdot \cdot \widetilde{F}_{j_{r}}(\underline{%
\emptyset })$ in any crystal $B_{f}^{\underline{s}}(\Lambda _{\underline{%
\frak{s}}}^{+})$ with $f\geq n+\left\| s\right\| .$ Fix such an integer $f.\;
$We derive from Proposition \ref{embed(f,inf)} that 
\begin{equation*}
\underline{\lambda }^{\Diamond }=\widetilde{f}_{j_{1}}\cdot \cdot \cdot 
\widetilde{f}_{j_{r}}(\underline{\emptyset })\text{ in }B_{\infty }^{%
\underline{s}^{\Diamond }}(\omega _{\underline{s}}).
\end{equation*}
Clearly the map $\varphi _{e,\infty }$ is injective.\ Thus it is an
embedding from the $\mathcal{U}_{v}(\widehat{\frak{sl}}_{e})$-crystal $%
B_{e}^{\underline{s}}(\Lambda _{\underline{\frak{s}}}^{+})$ to the $\mathcal{%
U}_{v}(\frak{sl}_{\infty })$-crystal $B_{\infty }^{\underline{s}^{\Diamond
}}(\omega _{\underline{s}}).$
\end{proof}

\bigskip

\noindent \textbf{Remark: }According to the previous theorem, the crystal $%
B_{e}^{\underline{s}}(\Lambda _{\underline{\frak{s}}}^{+})$ can be embedded
in $B_{\infty }^{\underline{s}^{\Diamond }}(\omega _{\underline{s}}).$ The
situation is more complicated for the crystal $B_{e}^{a}(\Lambda _{%
\underline{\frak{s}}}^{+})$ labelled by Kleshchev multipartitions (see
Remark $\mathrm{(iv)}$ after Proposition \ref{prop_FLOTW}).\ Indeed, the
subcrystal $B_{e}^{a}(\Lambda _{\underline{\frak{s}}}^{+})_{\leq n}$ can be
embedded in a crystal $B_{\infty }(\omega _{\underline{s}(n)})$ where the
multicharge $\underline{s}(n)=(s_{0}(n),...,s_{l-1}(n))$ verifies $%
s_{k}(n)-s_{k+1}(n)>n-1$ for any $k=0,...,l-2.\;$ Since $s(n)$ depends on $n$%
, this procedure cannot provide an embedding of the whole crystal $%
B_{e}^{a}(\Lambda _{\underline{\frak{s}}}^{+})$ in a crystal $B_{\infty
}(\omega _{\underline{t}})$ where $\underline{t}$ is a fixed multicharge.

\section{Isomorphism class of a multipartition\label{sec_class}}

We can now use the above embedding to obtain a simple characterization of
the set of Uglov multipartitions.

\subsection{The extended affine symmetric group $\widehat{S}_{\ell }\label%
{subsec_aff}$}

We write $\widehat{S}_{\ell }$ for the extended affine symmetric group in
type $A_{\ell -1}.$ The group $\widehat{S}_{\ell }$ can be regarded as the
group generated by the elements $\sigma _{1},...,\sigma _{\ell -1}$ and $%
y_{0},....,y_{\ell -1}$ together with the relations 
\begin{equation*}
\sigma _{i}\sigma _{i+1}\sigma _{i}=\sigma _{i+1}\sigma _{i}\sigma
_{i+1},\quad \sigma _{i}\sigma _{j}=\sigma _{j}\sigma _{i}\text{ for }\left|
i-j\right| >1,\quad \sigma _{i}^{2}=1
\end{equation*}
with all indices in $\{1,...,\ell -1\}$ and 
\begin{equation*}
y_{i}y_{j}=y_{j}y_{i},\quad \sigma _{i}y_{j}=y_{j}\sigma _{i}\text{ for }%
j\neq i,i+1,\quad \sigma _{i}y_{i}\sigma _{i}=y_{i+1}.
\end{equation*}
We identify the subgroup of $\widehat{S}_{\ell }$ generated by the
transpositions $\sigma _{i},$ $i=1,...,\ell -1$ with the symmetric group $%
S_{\ell }$ of rank $\ell .\;$For any $i\in \{0,...,\ell -1\}$, we set $%
z_{i}=y_{1}\cdot \cdot \cdot y_{i}.$ Write also $\xi =\sigma _{\ell -1}\cdot
\cdot \cdot \sigma _{1}$ and $\tau =y_{\ell }\xi .$ Since $%
y_{i}=z_{i-1}^{-1}z_{i}$, $\widehat{S}_{\ell }$ is generated by the
transpositions $\sigma _{i}$ with $i\in \{1,...,\ell -1\}$ and the elements $%
z_{i}$ with $i\in \{1,...,\ell \}.$ Observe that for any $i\in \{1,...,\ell
-1\},$ we have 
\begin{equation}
z_{i}=\xi ^{\ell -i}\tau ^{i}.  \label{dec-zi}
\end{equation}
This implies that $\widehat{S}_{\ell }$ is generated by the transpositions $%
\sigma _{i}$ with $i\in \{1,...,\ell -1\}$ and $\tau .$

\noindent Consider $e$ a fixed positive integer. We obtain a faithful action
of $\widehat{S}_{\ell }$ on $\mathbb{Z}^{\ell }$ by setting for any $%
\underline{s}=(s_{0},...,s_{\ell -1})\in \mathbb{Z}^{\ell }$%
\begin{equation*}
\sigma _{i}(\underline{s})=(s_{0},...,s_{i},s_{i-1},...,s_{\ell -1})\text{
and }y_{i}(\underline{s})=(s_{0},...,s_{i-1},s_{i}+e,...,s_{\ell -1}).
\end{equation*}
Then $\tau (\underline{s})=(s_{1},s_{2},...,s_{\ell -1},s_{0}+e)$. We denote
by $\mathcal{C}(\underline{s})$ the orbit of the multicharge $\underline{s}$
under the action of $\widehat{S}_{\ell }$ on $\mathbb{Z}^{\ell }$. Clearly
each class $\mathcal{C}(\underline{s})$ contains a unique multicharge $%
\underline{\widetilde{s}}=(\widetilde{s}_{0},...,\widetilde{s}_{\ell -1})$
such that 
\begin{equation}
0\leq \widetilde{s}_{\ell -1}\leq \cdot \cdot \cdot \leq \widetilde{s}%
_{0}\leq e-1.  \label{fund}
\end{equation}
Hence the orbits $\mathcal{C}(\underline{s})$ are parametrized by the
multicharges verifying (\ref{fund}). Given any multicharge $\underline{s}%
=(s_{0},....,s_{\ell -1})\in \mathbb{Z}^{\ell }$, it is easy to determinate $%
w\in \widehat{S}_{\ell }$ such that $\underline{\widetilde{s}}=w(\underline{s%
}).$ To do this, we compute a sequence of multicharges as follows. Choose $%
k\in \mathbb{N}$ minimal to have $s_{i}+ke\geq 0$ for any $i=0,...,\ell -1$%
.\ Then $z_{\ell -1}^{k}(\underline{s})\in \mathbb{N}^{\ell }.$ Consider $%
\sigma \in S_{\ell }$ such that the coordinates of $\underline{s}^{(\ell
-1)}=\sigma z_{\ell -1}^{k}(\underline{s})$ weakly decrease. Write $r_{\ell
-2}$ for the quotient of the division of $\underline{s}_{\ell -2}^{(\ell
-2)} $ by $e$ and set $\underline{s}^{(\ell -3)}=z_{\ell -2}^{-r_{\ell -2}}(%
\underline{s}^{(\ell -2)}).\;$By induction one can compute a sequence $%
\underline{s}^{(\ell -2)},...,\underline{s}^{(0)}$ such that, for any $%
i=1,...,\ell -2,$ $\underline{s}^{(i-1)}=z_{i}^{-r_{i}}(\underline{s}^{(i)})$
where $r_{i}$ is the quotient of the division of ${s}_{i}^{(i)}-%
{s}_{i+1}^{(i)}$ by $e$. We have then $\underline{\widetilde{s}}=%
\underline{s}^{(0)}$ and 
\begin{equation}
\underline{\widetilde{s}}=w(\underline{s})=z_{0}^{-r_{0}}\cdot \cdot \cdot
z_{\ell -2}^{-r_{\ell -2}}\sigma z_{\ell -1}^{k}(\underline{s}).
\label{reduc}
\end{equation}

\subsection{Action of the transformations $s_{i}$ and $\protect\tau $ on a
multipartition\label{action_eleme}}

Consider a multicharge $\underline{s}=(s_{0},...,s_{\ell -1})$ and $w$ an
element of the extended affine symmetric group. Set $\underline{s}^{\prime
}=w(\underline{s}).$ Since the indices of the fundamental weights of $%
\mathcal{U}_{v}^{+}(\widehat{\frak{sl}}_{e})$ belong to $\mathbb{Z}/e\mathbb{%
Z},$ we have $\Lambda _{\underline{\frak{s}}^{\prime }}^{+}=\Lambda _{%
\underline{\frak{s}}}^{+}.$ This implies that the crystals graphs $B_{e}^{%
\underline{s}}(\Lambda _{\underline{\frak{s}}}^{+})$ and $B_{e}^{\underline{s%
}^{\prime }}(\Lambda _{\underline{\frak{s}}^{\prime }}^{+})$ are isomorphic.
Write $\Gamma _{\underline{s},\underline{s}^{\prime }}$ for the isomorphism
between $B_{e}^{\underline{s}}(\Lambda _{\underline{\frak{s}}}^{+})$ and $%
B_{e}^{\underline{s}^{\prime }}(\Lambda _{\underline{\frak{s}}^{\prime
}}^{+}).\;$Given a multipartition $\underline{\lambda }$ in $B_{e}^{%
\underline{s}}(\Lambda _{\underline{\frak{s}}}^{+}),$ we are going to see
how it is possible to determinate $\mu =\Gamma _{\underline{s},\underline{s}%
^{\prime }}(\underline{\lambda })$ in a non-inductive way, that is, without
computing a path joining $\underline{\lambda }$ to $\underline{\emptyset }$
in $B_{e}^{\underline{s}}(\Lambda _{\underline{\frak{s}}}^{+}).$ According
to Section \ref{subsec_aff}, $w$ decomposes as a product of the elements $%
\sigma _{i}$ $i=1,...,\ell -1$ and $\tau .$ We write for short $\Xi _{%
\underline{s}}$ and $\Sigma _{\underline{s},i}$ respectively for the crystal
graph isomorphisms $\Gamma _{\underline{s},\tau (\underline{s})}$ and $%
\Gamma _{\underline{s},\sigma _{i}(\underline{s})}.$ The following
proposition is a generalization of \cite[Prop. 3.1]{jac}.

\begin{proposition}
\label{prop_isotau}Consider $\underline{\lambda }=(\lambda
^{(0)},...,\lambda ^{(\ell -1)})$ a multipartition and $\underline{s}$ a
multicharge. Then 
\begin{equation*}
\Xi _{\underline{s}}(\underline{\lambda })=(\lambda ^{(1)},...,\lambda
^{(\ell -1)},\lambda ^{(0)}).
\end{equation*}
\end{proposition}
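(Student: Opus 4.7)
I would argue by induction on $n=|\underline{\lambda}|$. The base case $\underline{\lambda}=\underline{\emptyset}$ is immediate: $\Xi_{\underline{s}}$ is the unique isomorphism of connected highest weight crystals sharing the vertex $\underline{\emptyset}$, and hence fixes it. For the inductive step, write $\underline{\lambda}=\widetilde{F}_{i}(\underline{\nu})$ with $|\underline{\nu}|=n-1$ and $i\in\{0,\dots,e-1\}$. By the induction hypothesis $\Xi_{\underline{s}}(\underline{\nu})=(\nu^{(1)},\dots,\nu^{(\ell-1)},\nu^{(0)})$, and since $\Xi_{\underline{s}}$ intertwines the operators $\widetilde{F}_{i}$, the statement reduces to the commutation $\widetilde{F}_{i}(\sigma(\underline{\nu}))=\sigma(\widetilde{F}_{i}(\underline{\nu}))$, where $\sigma$ denotes the cyclic shift on multipartitions. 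Equivalently, the good $i$-node produced from $\underline{\nu}$ under multicharge $\underline{s}$ must correspond, via the cyclic component relabelling, to the good $i$-node produced from $\sigma(\underline{\nu})$ under multicharge $\tau(\underline{s})$.

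To establish this, I would pass to the tensor picture offered by Theorem~\ref{th_embedd}. First, Proposition~\ref{embed(e,f)} allows one to replace $B_{e}$ by $B_{f}$ for $f\geq n+\|\underline{s}\|$ (and analogously for $\tau(\underline{s})$), so that every arrow becomes unambiguously labelled by the content of the added node and no residue collisions occur. The embedding then identifies $\underline{\nu}$ with $\nu^{(\ell-1)}\otimes\cdots\otimes\nu^{(0)}\in B_{\infty}(\omega_{s_{\ell-1}})\otimes\cdots\otimes B_{\infty}(\omega_{s_{0}})$ and $\sigma(\underline{\nu})$ with $\nu^{(0)}\otimes\nu^{(\ell-1)}\otimes\cdots\otimes\nu^{(1)}\in B_{\infty}(\omega_{s_{0}+e})\otimes B_{\infty}(\omega_{s_{\ell-1}})\otimes\cdots\otimes B_{\infty}(\omega_{s_{1}})$. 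The partition $\nu^{(0)}$ regarded as a vertex of $B_{\infty}(\omega_{s_{0}})$ or of $B_{\infty}(\omega_{s_{0}+e})$ carries the same underlying Young diagram but with all node contents shifted by $+e$ in the second case; consequently its contribution to the $j$-signature of the first embedding matches its contribution to the $(j+e)$-signature of the second, arrow by arrow, while the contributions from the other components are unaffected.

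The main obstacle is the case where the good $i$-node lies in the $0$-th component of $\underline{\nu}$. Under $\sigma$ it migrates from the last-read tensor factor in the old embedding to the first-read one in the new embedding, and its content jumps from $j$ to $j+e$, so the orderings $\prec^{+}_{\underline{s}}$ and $\prec^{+}_{\tau(\underline{s})}$ interleave the relevant nodes in seemingly incompatible ways. The crux of the argument is to show that the $+e$ content shift compensates exactly for the change of position within the reading order, so that after performing the $RA$-cancellations both reduced signatures leave the same surviving rightmost addable letter, up to the $\sigma$-relabelling of components. I would split into two cases according to whether the good node sits at a component $c\geq 1$ (where contents are preserved and the signature comparison is essentially tautological) or at $c=0$ (where the $+e$ shift must be tracked carefully through the $RA$-cancellation), thus completing the induction.
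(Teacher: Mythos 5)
There is a genuine gap: the step you yourself call ``the crux'' --- that after the cyclic shift the two reduced $i$-signatures select corresponding good nodes --- is never proved; you only announce a case split (``I would split into two cases\dots the $+e$ shift must be tracked carefully through the $RA$-cancellation''). But that compensation statement \emph{is} the proposition: once it is granted, the commutation with every $\widetilde{F}_{i}$ and hence the identification with $\Xi_{\underline{s}}$ is immediate, so the proposal stops exactly where the proof has to start. Moreover, the detour through Theorem \ref{th_embedd} cannot supply this step by soft arguments: since $\tau$ replaces $s_{0}$ by $s_{0}+e$, the dominant $\frak{sl}_{\infty }$-weights $\omega _{\underline{s}}$ and $\omega _{\tau (\underline{s})}$ are \emph{different}, so unlike the situation of Proposition \ref{prop_iso_sigmai} there is no $\mathcal{U}_{v}(\frak{sl}_{\infty })$-crystal isomorphism (no analogue of $\psi _{k,l}$) to invoke, and the same sequence of operators applied to the two highest weight vertices yields vertices whose $0$-th components differ (all contents shifted by $e$). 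The comparison therefore has to be made on the affine $i$-signature itself, where $w_{i}$ interleaves addable/removable nodes of several contents congruent to $i$ modulo $e$ coming from all components; the shift simultaneously moves the $0$-th component to the other end of the reading and raises its contents by $e$, and the pairs of $i$-nodes whose contents differ by $0$ or by $e$ across that component are precisely where the two readings get rearranged. Showing that the $RA$-reduction nevertheless produces matching good nodes is a nontrivial combinatorial verification, not a bookkeeping remark.

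By contrast, the paper's proof is short, direct and non-inductive: it observes that the bijection $(a,b,c)\mapsto (a,b,c-1\ \mathrm{mod}\ \ell )$ sends $i$-nodes of $\underline{\lambda }$ to $i$-nodes of $\underline{\lambda }^{\#}$ (residues are unchanged because the content of the moved component shifts by exactly $e$) and then checks compatibility of the total orders $\prec _{\underline{s}}^{+}$ and $\prec _{\tau (\underline{s})}^{+}$ on these nodes, so that the shift commutes with all crystal operators on the Fock space crystal and, fixing $\underline{\emptyset }$, must coincide with $\Xi _{\underline{s}}$ on its connected component. Your induction-plus-intertwining skeleton is essentially the same reduction; what is missing is precisely this order/signature comparison, and the $A_{\infty }$ embedding is the wrong tool for it here. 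If you want to salvage your approach, drop the embedding and prove directly, from the definitions (\ref{def_order+}) of $\prec _{\underline{s}}^{+}$ and $\prec _{\tau (\underline{s})}^{+}$, that the two encoded words $w_{i}$ have the same reduced form and corresponding rightmost $A$ --- paying particular attention to pairs of $i$-nodes, one in the moved component, whose contents differ by $0$ or $e$.
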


\begin{proof}
Set $\underline{s}=(s_{0},...,s_{\ell -1}).\;$Then $\tau (\underline{s}%
)=(s_{1},....,s_{\ell -1},s_{0}+e)$. Let $\underline{\lambda }=(\lambda
^{(0)},...,\lambda ^{(\ell -1)})$ a multipartition and set $\underline{%
\lambda }^{\#}=(\lambda ^{(1)},...,\lambda ^{(\ell -1)},\lambda ^{(0)})$.
Consider $i\in \{0,1,...,e-1\}$ and $\gamma _{1}=(a_{1},b_{1},c_{1}),$ $%
\gamma _{2}=(a_{2},b_{2},c_{2})$ two $i$-nodes of $\underline{\lambda }$.
Then $\gamma _{1}^{\#}=(a_{1},b_{1},c_{1}-1(\text{mod }e))$ and $\gamma
_{2}^{\#}=(a_{2},b_{2},c_{2}-1(\text{mod }e))$ are two $i$-nodes of $%
\underline{\lambda }^{\#}$. We then easily check that $\gamma _{2}\prec _{%
\underline{s}}^{+}\gamma _{1}$ if and only if $\gamma _{2}^{\#}\prec _{\tau (%
\underline{s})}^{+}\gamma _{1}^{\prime }\ .$ This implies that $\Xi _{%
\underline{s}}(\underline{\lambda })=\underline{\lambda }^{\#}.$
\end{proof}

\bigskip

Consider $\underline{\lambda }=(\lambda ^{(0)},...,\lambda ^{(\ell -1)})$ a
multipartition and $\underline{s}$ a multicharge such that $\underline{%
\lambda }\in B_{e}^{\underline{s}}(\Lambda _{\underline{\frak{s}}}^{+})$.
Then we know that $\underline{\lambda }^{\Diamond }=(\lambda ^{(\ell
-1)},...\lambda ^{(0)})\in B_{\infty }^{\underline{s}^{\Diamond }}(\omega _{%
\underline{s}}).$ Recall that, by definition of $B_{\infty }^{\underline{s}%
^{\Diamond }}(\omega _{\underline{s}})$, we can write $\underline{\lambda }%
^{\Diamond }$ $=\lambda ^{(\ell -1)}\otimes \cdot \cdot \cdot \otimes
\lambda ^{(0)}.\;$For any integer $i\in \{0,...,n-1\}.$ Set 
\begin{equation}
\psi _{s_{i+1},s_{i}}(\lambda ^{(i+1)}\otimes \lambda ^{(i)})=\widetilde{%
\lambda }^{(i)}\otimes \widetilde{\lambda }^{(i+1)}  \label{elem}
\end{equation}
where $\psi _{s_{i+1},s_{i}}$ is the crystal graph isomorphism defined in (%
\ref{iso_psi}).

\begin{proposition}
\label{prop_iso_sigmai}With the above notation, we have 
\begin{equation*}
\Sigma _{\underline{s},i}(\underline{\lambda })=(\lambda ^{(0)},...,%
\widetilde{\lambda }^{(i+1)},\widetilde{\lambda }^{(i)},...\lambda ^{(\ell
-1)})
\end{equation*}
that is $\Sigma _{\underline{s},i}(\underline{\lambda })$ is obtained by
replacing in $\underline{\lambda },$ $\lambda ^{(i)}$ by $\widetilde{\lambda 
}^{(i+1)}$ and $\lambda ^{(i+1)}$ by $\widetilde{\lambda }^{(i)}.$
\end{proposition}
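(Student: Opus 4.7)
The plan is to transfer the statement to the $\mathcal{U}_{v}(\frak{sl}_{\infty})$-setting via the embedding $\varphi_{e,\infty}$ of Theorem \ref{th_embedd}, and then invoke the uniqueness of crystal isomorphisms between connected components sharing the same highest weight. First I would write $\underline{\lambda} = \widetilde{F}_{j_{1}} \cdots \widetilde{F}_{j_{n}}(\underline{\emptyset})$ in $B_{e}^{\underline{s}}(\Lambda_{\underline{\frak{s}}}^{+})$. Since $\Sigma_{\underline{s},i}$ is by definition an isomorphism of abstract crystals fixing the empty multipartition, the image $\underline{\mu} := \Sigma_{\underline{s},i}(\underline{\lambda})$ admits the same expression $\widetilde{F}_{j_{1}} \cdots \widetilde{F}_{j_{n}}(\underline{\emptyset})$ now read in $B_{e}^{\sigma_{i}(\underline{s})}(\Lambda_{\underline{\frak{s}}}^{+})$. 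Applying Theorem \ref{th_embedd} on each side, and observing that $\omega_{\sigma_{i}(\underline{s})} = \omega_{\underline{s}}$, I then obtain $\underline{\lambda}^{\Diamond} = \widetilde{f}_{j_{1}} \cdots \widetilde{f}_{j_{n}}(\underline{\emptyset})$ in $B_{\infty}^{\underline{s}^{\Diamond}}(\omega_{\underline{s}})$ and $\underline{\mu}^{\Diamond} = \widetilde{f}_{j_{1}} \cdots \widetilde{f}_{j_{n}}(\underline{\emptyset})$ in $B_{\infty}^{\sigma_{i}(\underline{s})^{\Diamond}}(\omega_{\underline{s}})$.

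The key step is then to exhibit an explicit crystal isomorphism $B_{\infty}^{\underline{s}^{\Diamond}}(\omega_{\underline{s}}) \simeq B_{\infty}^{\sigma_{i}(\underline{s})^{\Diamond}}(\omega_{\underline{s}})$ that sends $\underline{\lambda}^{\Diamond}$ to $\underline{\mu}^{\Diamond}$. The multicharge $\sigma_{i}(\underline{s})^{\Diamond}$ is obtained from $\underline{s}^{\Diamond}$ by swapping its entries at the two positions $\ell-2-i$ and $\ell-1-i$, which in the $\Diamond$-reversed ordering house precisely the tensor factors $\lambda^{(i+1)}$ and $\lambda^{(i)}$. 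On the ambient tensor product $B_{\infty}(\omega_{s_{\ell-1}}) \otimes \cdots \otimes B_{\infty}(\omega_{s_{0}})$, I would consider the map $\Psi_{i}$ that acts as the identity on every factor except those in positions $\ell-2-i$ and $\ell-1-i$, where it acts by $\psi_{s_{i+1},s_{i}}$. By Corollary \ref{cor_iso} together with the tensor product rule (\ref{TENS1})--(\ref{TENS2}), $\Psi_{i}$ is an isomorphism of $\mathcal{U}_{v}(\frak{sl}_{\infty})$-crystals. Moreover it sends the highest weight vertex $b^{(\underline{s}^{\Diamond})}$ to the highest weight vertex of $B_{\infty}^{\sigma_{i}(\underline{s})^{\Diamond}}(\omega_{\underline{s}})$, since $\psi_{s_{i+1},s_{i}}$ preserves highest weights. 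By uniqueness of crystal isomorphisms between isomorphic connected highest weight components, $\Psi_{i}$ must coincide on $B_{\infty}^{\underline{s}^{\Diamond}}(\omega_{\underline{s}})$ with the abstract isomorphism, and in particular $\underline{\mu}^{\Diamond} = \Psi_{i}(\underline{\lambda}^{\Diamond})$.

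Finally, unpacking this equality via definition (\ref{elem}), only the two adjacent factors $\lambda^{(i+1)} \otimes \lambda^{(i)}$ are affected and they get replaced by $\widetilde{\lambda}^{(i)} \otimes \widetilde{\lambda}^{(i+1)}$; undoing the $\Diamond$ reversal then yields exactly the announced formula for $\Sigma_{\underline{s},i}(\underline{\lambda})$. I expect the main obstacle to be the careful bookkeeping of the $\Diamond$ index reversal, namely checking that the two tensor factors swapped by $\Psi_{i}$ correspond precisely to the $i$-th and $(i+1)$-th components of the original multipartition, and that the tilded columns $\widetilde{\lambda}^{(i)}$ and $\widetilde{\lambda}^{(i+1)}$ land at the correct positions of $\Sigma_{\underline{s},i}(\underline{\lambda})$ once the reversal is undone; the remaining ingredients (existence and uniqueness of the abstract isomorphism, and compatibility of $\Psi_{i}$ with crystal operators) are essentially formal consequences of the material already developed in Sections 2 and 4.
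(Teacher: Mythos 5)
Your argument is essentially the paper's own proof: the paper likewise transports a path $\widetilde{F}_{j_{1}}\cdots \widetilde{F}_{j_{n}}(\underline{\emptyset })$ through the embedding $\varphi _{e,\infty }$ of Theorem \ref{th_embedd} and uses that $\psi _{s_{i+1},s_{i}}$, extended by the identity on the remaining tensor factors, is a $\mathcal{U}_{v}(\frak{sl}_{\infty })$-crystal isomorphism matching the two highest weight vertices, which is exactly the commuting square (\ref{diag}). The only cosmetic difference is that you conclude via uniqueness of the crystal isomorphism between the two connected $A_{\infty }$-components, whereas the paper concludes by applying the same string $\widetilde{f}_{j_{1}}\cdots \widetilde{f}_{j_{n}}$ to both images of $\underline{\emptyset }$ — the same argument in substance.
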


\begin{proof}
We have to prove that the diagram 
\begin{equation}
\begin{array}{lll}
B_{e}^{\underline{s}}(\Lambda _{\underline{\frak{s}}}^{+}) & \overset{%
\varphi _{e,\infty }}{\rightarrow } & B_{\infty }(\omega _{s_{\ell
-1}})\otimes \cdot \cdot \cdot \otimes B_{\infty }(\omega _{s_{i+1}})\otimes
B_{\infty }(\omega _{s_{i}})\otimes \cdot \cdot \cdot \otimes B_{\infty
}(\omega _{s_{0}}) \\ 
\Sigma _{\underline{s},i}\downarrow  &  & \text{ \ \ \ \ \ \ \ \ \ \ \ \ \ \
\ \ \ \ \ \ \ \ \ \ \ \ \ \ }\downarrow \psi _{s_{i+1},s_{i}} \\ 
B_{e}^{\sigma _{i}(\underline{s})}(\Lambda _{\underline{\frak{s}}}^{+}) & 
\overset{\varphi _{e,\infty }}{\rightarrow } & B_{\infty }(\omega _{s_{\ell
-1}})\otimes \cdot \cdot \cdot \otimes B_{\infty }(\omega _{s_{i}})\otimes
B_{\infty }(\omega _{s_{i+1}})\otimes \cdot \cdot \cdot \otimes B_{\infty
}(\omega _{s_{0}})
\end{array}
\label{diag}
\end{equation}
commutes. Consider a multipartition $\underline{\lambda }\in B_{e}^{%
\underline{s}}(\Lambda _{\underline{\frak{s}}}^{+}).\;$Set $\underline{%
\lambda }=(\lambda ^{(0)},...,\lambda ^{(\ell -1)}).\;$Let $\widetilde{F}%
_{i_{1}},...,\widetilde{F}_{i_{r}}$ be a sequence of crystal operators such
that $\underline{\lambda }=\widetilde{F}_{i_{1}}\cdot \cdot \cdot \widetilde{%
F}_{i_{r}}(\underline{\emptyset }).$ Then we can consider the multipartition 
$\underline{\mu }=\widetilde{F}_{i_{1}},...,\widetilde{F}_{i_{r}}(\underline{%
\emptyset })$ in the crystal $B_{e}^{\sigma _{i}(\underline{s})}(\Lambda _{%
\underline{\frak{s}}}^{+})$. Observe first that 
\begin{equation}
\psi _{s_{i+1},s_{i}}\circ \varphi _{e,\infty }(\underline{\emptyset }%
)=\varphi _{e,\infty }\circ \Sigma _{\underline{s},i}(\underline{\emptyset }%
).  \label{eq_empty}
\end{equation}
Moreover, the maps $\varphi _{e,\infty },$ $\Sigma _{\underline{s},i}$ and $%
\psi _{s_{i+1},s_{i}}$ commute with the crystal operators. This permits us
to write 
\begin{equation*}
\psi _{s_{i+1},s_{i}}\circ \varphi _{e,\infty }(\underline{\lambda })=\psi
_{s_{i+1},s_{i}}\circ \varphi _{e,\infty }(\widetilde{F}_{i_{1}}\cdot \cdot
\cdot \widetilde{F}_{i_{r}}(\underline{\emptyset }))=\widetilde{f}%
_{i_{1}}\cdot \cdot \cdot \widetilde{f}_{i_{r}}(\psi _{s_{i+1},s_{i}}\circ
\varphi _{e,\infty }(\underline{\emptyset })).
\end{equation*}
One the other hand we have 
\begin{equation*}
\varphi _{e,\infty }\circ \Sigma _{\underline{s},i}(\underline{\lambda }%
)=\varphi _{e,\infty }\circ \Sigma _{\underline{s},i}(\widetilde{F}%
_{i_{1}}\cdot \cdot \cdot \widetilde{F}_{i_{r}}(\underline{\emptyset }))=%
\widetilde{f}_{i_{1}}\cdot \cdot \cdot \widetilde{f}_{i_{r}}(\varphi
_{e,\infty }\circ \Sigma _{\underline{s},i}(\underline{\emptyset })).
\end{equation*}
Hence we derive the equality $\psi _{s_{i+1},s_{i}}\circ \varphi _{e,\infty
}(\underline{\lambda })=\varphi _{e,\infty }\circ \Sigma _{\underline{s},i}(%
\underline{\lambda })$ from (\ref{eq_empty}). This shows that the diagram (%
\ref{diag}) commutes and establish our proposition.
\end{proof}

\begin{example}
Take $\ell =3.$ Suppose $\underline{s}=(4,0,1)$ and $\underline{\lambda }%
=(\lambda ^{(0)},\lambda ^{(1)},\lambda ^{(2)})$ with $\lambda
^{(0)}=(4,3,3,2),$ $\lambda ^{(1)}=(3,3,1)$ and $\lambda ^{(2)}=(5,3,2).$
Let us compute $\Sigma _{\underline{s},2}(\underline{\lambda })$.\ The
infinite columns associated to $\lambda ^{(1)}$ and $\lambda ^{(2)}$ are
respectively 
\begin{equation*}
\mathcal{C}_{1}=
\begin{tabular}{|c|}
\hline
$\mathtt{3}$ \\ \hline
$\mathtt{2}$ \\ \hline
$\mathtt{\bar{1}}$ \\ \hline
$\mathtt{\bar{3}}$ \\ \hline
$\mathtt{\bar{4}}$ \\ \hline
$\cdot \cdot \cdot $ \\ \hline
\end{tabular}
\text{ and }\mathcal{C}_{2}=
\begin{tabular}{|c|}
\hline
$\mathtt{6}$ \\ \hline
$\mathtt{3}$ \\ \hline
$\mathtt{1}$ \\ \hline
$\mathtt{\bar{2}}$ \\ \hline
$\mathtt{\bar{3}}$ \\ \hline
$\mathtt{\bar{4}}$ \\ \hline
$\cdot \cdot \cdot $ \\ \hline
\end{tabular}
.
\end{equation*}
For $a=\overline{3}$ the corresponding finite columns are 
\begin{equation*}
C_{1}=
\begin{tabular}{|c|}
\hline
$\mathtt{3}$ \\ \hline
$\mathtt{2}$ \\ \hline
$\mathtt{\bar{1}}$ \\ \hline
$\mathtt{\bar{3}}$ \\ \hline
\end{tabular}
\text{ and }C_{2}=
\begin{tabular}{|c|}
\hline
$\mathtt{6}$ \\ \hline
$\mathtt{3}$ \\ \hline
$\mathtt{1}$ \\ \hline
$\mathtt{\bar{2}}$ \\ \hline
$\mathtt{\bar{3}}$ \\ \hline
\end{tabular}
.
\end{equation*}
We have to determinate the image of $C_{2}\otimes C_{1}$ under the
isomorphism $\theta _{5,4}$ of Proposition \ref{iso_NY}. Note that the image
of $C_{1}\otimes C_{2}$ under $\theta _{4,5}$ is not relevant here because
we must take into account the swap $\diamond $.$\;$We obtain $%
\{y_{1},y_{2},y_{3},y_{4}\}=\{3,1,\bar{2},\bar{3}\}.\;$This gives $\theta
_{5,4}(C_{2}\otimes C_{1})=C_{1}^{\prime }\otimes C_{2}^{\prime }$ with $%
C_{1}^{\prime }=$%
\begin{tabular}{|c|}
\hline
$\mathtt{3}$ \\ \hline
$\mathtt{1}$ \\ \hline
$\mathtt{\bar{2}}$ \\ \hline
$\mathtt{\bar{3}}$ \\ \hline
\end{tabular}
and $C_{2}^{\prime }=
\begin{tabular}{|c|}
\hline
$\mathtt{6}$ \\ \hline
$\mathtt{3}$ \\ \hline
$\mathtt{2}$ \\ \hline
$\mathtt{\bar{1}}$ \\ \hline
$\mathtt{\bar{3}}$ \\ \hline
\end{tabular}
$. Hence $\psi _{1,0}(\mathcal{C}_{2}\otimes \mathcal{C}_{1})=\mathcal{C}%
_{1}^{\prime }\otimes \mathcal{C}_{2}^{\prime }$ where 
\begin{equation*}
\mathcal{C}_{1}^{\prime }=
\begin{tabular}{|c|}
\hline
$\mathtt{3}$ \\ \hline
$\mathtt{1}$ \\ \hline
$\mathtt{\bar{2}}$ \\ \hline
$\mathtt{\bar{3}}$ \\ \hline
$\mathtt{\bar{4}}$ \\ \hline
$\cdot \cdot \cdot $ \\ \hline
\end{tabular}
\text{ and }\mathcal{C}_{2}^{\prime }=
\begin{tabular}{|c|}
\hline
$\mathtt{6}$ \\ \hline
$\mathtt{3}$ \\ \hline
$\mathtt{2}$ \\ \hline
$\mathtt{\bar{1}}$ \\ \hline
$\mathtt{\bar{3}}$ \\ \hline
$\mathtt{\bar{4}}$ \\ \hline
$\cdot \cdot \cdot $ \\ \hline
\end{tabular}
.
\end{equation*}
Finally we derive $\Sigma _{\underline{s},2}(\underline{\lambda })=(\lambda
^{(0)},\widetilde{\lambda }^{(2)},\widetilde{\lambda }^{(1)})$ with $%
\widetilde{\lambda }^{(1)}=(3,2)$ and $\widetilde{\lambda }^{(2)}=(5,3,3,1)$.
\end{example}

\bigskip

\noindent \textbf{Remark: }Assume that $\underline{\lambda }=(\lambda
^{(0)},\lambda ^{(1)})$ is a bipartition such that $\underline{\lambda }$
belongs to $B_{e}^{\underline{s}}(\Lambda _{\underline{\frak{s}}}^{+})$
where the multicharge $\underline{s}=(s_{0},s_{1})$ verifies $s_{0}\leq s_{1}$.
Then the combinatorial procedure illustrated by the previous example which
permits to compute the crystal isomorphisms $\Sigma _{\underline{s},i},$
essentially reduces, up to a renormalization due to the change of labelling
of the Dynkin diagram in type $A_{e-1}^{(1)}$ (see (\ref{label_dynkin})), to
the algorithm depicted in Theorem 4.6 of \cite{jac}.

\subsection{A non recursive characterization of the Uglov multipartitions}

Consider a multicharge $\underline{s}$ and define the multicharge $%
\underline{\widetilde{s}}$ as in (\ref{fund}).\ Then the crystals $B_{e}^{%
\underline{s}}(\Lambda _{\underline{\frak{s}}}^{+})$ and $B_{e}^{\underline{%
\widetilde{s}}}(\Lambda _{\underline{\frak{s}}}^{+})$ are isomorphic.\ For
any multipartition $\underline{\lambda }\in B_{e}^{\underline{s}}(\Lambda _{%
\underline{\frak{s}}}^{+})$, write $I(\underline{\lambda })\in B_{e}^{%
\underline{\widetilde{s}}}(\Lambda _{\underline{\frak{s}}}^{+})$ for its
image under this crystal isomorphism.\ It is possible to obtain $I(%
\underline{\lambda })$ from $\underline{\lambda }$ by using results of \S 
\ref{action_eleme}.\ We keep the notations of \S \ref{subsec_aff}. For any $%
i=0,...,\ell -1,$ we have $z_{i}=\xi ^{(\ell -i)}\tau ^{(i)}$.\ This permits
to compute $\Gamma _{\underline{s},z_{i}(\underline{s})}(\underline{\lambda }%
)$ by using Propositions \ref{prop_isotau} and \ref{prop_iso_sigmai}.\ We
have then 
\begin{equation*}
I(\underline{\lambda })=\Gamma _{\underline{s},w(\underline{s})}(\underline{%
\lambda })
\end{equation*}
with the notation (\ref{reduc}).

\noindent Conversely, given any FLOTW multipartition $\underline{\mu }$ and
any multicharge $\underline{s}$, one can compute the multipartition $%
\underline{\lambda }\in B_{e}^{\underline{s}}(\Lambda _{\underline{\frak{s}}%
}^{+})$ such that $I(\underline{\lambda })=\underline{\mu }.\;$Indeed, we
have then $\underline{\lambda }=\Gamma _{\widetilde{\underline{s}},w^{-1}(%
\underline{\widetilde{s}})}(\underline{\mu })$.\ By remark $\mathrm{(i)}$
following Proposition \ref{prop_FLOTW}, we thus derive a non recursive
combinatorial description of the Uglov multipartitions labelling $%
B_{e}^{+}(\Lambda _{\underline{\frak{s}}}^{+})_{n}=\{\underline{\lambda }\in
B_{e}^{\underline{s}}(\Lambda _{\underline{\frak{s}}}^{+})\mid \left| 
\underline{\lambda }\right| =n\}.$

\begin{proposition}
\label{pro_non_recu}For any multicharge $\underline{s}$%
\begin{equation*}
B_{e}^{+}(\Lambda _{\underline{\frak{s}}}^{+})_{n}=\{\Gamma _{\widetilde{%
\underline{s}},w^{-1}(\underline{\widetilde{s}})}(\underline{\mu })\mid 
\underline{\mu }\in B_{e}^{\underline{\widetilde{s}}}(\Lambda _{\underline{%
\frak{s}}}^{+})_{n}\}
\end{equation*}
where $w$ is obtained from $\underline{s}$ as in (\ref{reduc}).
\end{proposition}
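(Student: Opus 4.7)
The plan is to observe that Proposition \ref{pro_non_recu} is essentially a direct restatement of the preceding analysis: once we know that $\Gamma_{\underline{s},\underline{s}'}$ is a bona fide crystal isomorphism whose construction via the elementary building blocks $\Xi$ and $\Sigma_{\cdot,i}$ has been spelled out in Propositions \ref{prop_isotau} and \ref{prop_iso_sigmai}, the claimed parametrization falls out. First I would note that by (\ref{reduc}) we have $\underline{\widetilde{s}}=w(\underline{s})$, so $w^{-1}(\underline{\widetilde{s}})=\underline{s}$ and the map appearing in the statement is simply $\Gamma_{\underline{\widetilde{s}},\underline{s}}$.

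Next I would invoke the fact that $\Gamma_{\underline{\widetilde{s}},\underline{s}}$ is, by definition, a crystal isomorphism between the two realizations $B_{e}^{\underline{\widetilde{s}}}(\Lambda_{\underline{\frak{s}}}^{+})$ and $B_{e}^{\underline{s}}(\Lambda_{\underline{\frak{s}}}^{+})$ of the same abstract highest weight crystal $B_{e}(\Lambda_{\underline{\frak{s}}}^{+})$. In particular it is a bijection commuting with all the $\widetilde{E}_i,\widetilde{F}_i$, hence it preserves the weight, and therefore it preserves the rank $|\underline{\lambda}|$ (which is read off from the weight $\Lambda_{\underline{\frak{s}}}^{+}-\sum_i n_i\alpha_i$ via $n=\sum_i n_i$). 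Consequently it restricts to a bijection between the finite sets $B_{e}^{\underline{\widetilde{s}}}(\Lambda_{\underline{\frak{s}}}^{+})_{n}$ and $B_{e}^{\underline{s}}(\Lambda_{\underline{\frak{s}}}^{+})_{n}$, giving the set-theoretic equality claimed.

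What makes the statement genuinely useful (and the ``non recursive'' aspect of its title) is that the right-hand side is explicitly computable. Here I would spell out that, thanks to the decomposition $z_i=\xi^{\ell-i}\tau^i$ from (\ref{dec-zi}) and the factorization (\ref{reduc}), the word $w$ decomposes as a product of the generators $\sigma_i$ and $\tau$; hence $\Gamma_{\underline{\widetilde{s}},w^{-1}(\underline{\widetilde{s}})}$ is obtained as a finite composition of the explicit maps $\Sigma_{\cdot,i}$ and $\Xi_{\cdot}$. Proposition \ref{prop_isotau} describes $\Xi$ by a cyclic shift of the components, while Proposition \ref{prop_iso_sigmai} describes $\Sigma_{\cdot,i}$ as an application of the combinatorial $R$-matrix $\psi_{s_{i+1},s_i}$ from Corollary \ref{cor_iso}, which in turn is given by the wholly non-inductive column-insertion algorithm (\ref{algo1})--(\ref{algo2}). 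None of these ingredients requires knowing a path from $\underline{\lambda}$ to $\underline{\emptyset}$ in the crystal.

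I do not expect a serious obstacle: the content was established in the previous subsections, and what remains is a bookkeeping argument. The only point I would be slightly careful about is the well-definedness of the parametrization on the right-hand side, namely that applying $\Gamma_{\underline{\widetilde{s}},w^{-1}(\underline{\widetilde{s}})}$ to an Uglov (in fact FLOTW, by Proposition \ref{prop_FLOTW}) multipartition in $B_e^{\underline{\widetilde{s}}}(\Lambda_{\underline{\frak{s}}}^{+})$ produces a multipartition rather than some formal symbol; this is guaranteed because each factor $\Sigma_{\cdot,i}$ and $\Xi_{\cdot}$ is, by construction, a map from multipartitions to multipartitions.
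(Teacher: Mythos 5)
Your proposal is correct and follows essentially the same route as the paper, which gives no separate proof but derives the statement directly from the preceding discussion: $\Gamma_{\widetilde{\underline{s}},w^{-1}(\underline{\widetilde{s}})}=\Gamma_{\underline{\widetilde{s}},\underline{s}}$ is the (rank-preserving) crystal isomorphism inverse to $I=\Gamma_{\underline{s},w(\underline{s})}$, computable non-recursively by composing the maps $\Xi$ and $\Sigma_{\cdot,i}$ of Propositions \ref{prop_isotau} and \ref{prop_iso_sigmai}. Your extra remarks on weight/rank preservation and well-definedness are sound bookkeeping consistent with the paper's argument.
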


\subsection{Isomorphism class of a multipartition}

Suppose that $e$ is a fixed positive integer and $\underline{s}$ a
multicharge of level $\ell .$ Consider $\underline{\lambda }$ a
multipartition in $B_{e}^{\underline{s}}(\Lambda _{\underline{\frak{s}}%
}^{+}).\;$The isomorphism class of $\underline{\lambda }$ is the set 
\begin{equation*}
\frak{C}(\underline{\lambda })=\{\Gamma _{\underline{s},\underline{s}%
^{\prime }}(\underline{\lambda })\mid s^{\prime }\in \mathcal{C}(\underline{s%
})\}.
\end{equation*}
Thus $\frak{C}(\underline{\lambda })$ is the set of all multipartitions $%
\underline{\mu }$ which appear at the same place as $\underline{\lambda }$
in a crystal $B_{e}^{\underline{s}^{\prime }}(\Lambda _{\underline{\frak{s}}%
}^{+})$ where $\underline{s}^{\prime }$ is a multicharge of the orbit of $%
\underline{s}$ under the action of $\widehat{S}_{\ell }$. Then $\frak{C}(%
\underline{\lambda })$ can be determined from $\underline{\lambda }$ by
applying successive elementary transformations using Propositions \ref
{prop_isotau} and \ref{prop_iso_sigmai}. Observe that for any $\underline{%
\mu }\in \frak{C}(\underline{\lambda })$ we must have $\left| \underline{%
\lambda }\right| =\left| \underline{\mu }\right| $.\ This implies in
particular that $\frak{C}(\underline{\lambda })$ is finite. The cardinality
of $\frak{C}(\underline{\lambda })$ is in general rather complicated to
evaluate without computing the whole class $\frak{C}(\underline{\lambda })$.
Nevertheless, we are going to see in Theorem \ref{Th_bound}, that it is
possible to obtain an upper bound for $\mathrm{card}(\frak{C}(\underline{%
\lambda }))$ and to determinate a finite subset $S_{\underline{\lambda }}$
of $\widehat{S}_{\ell }$ such that 
\begin{equation*}
\frak{C}(\underline{\lambda })=\{\Gamma _{\underline{s},\underline{s}%
^{\prime }}(\underline{\lambda })\mid s^{\prime }\in S_{\underline{\lambda }%
}\cdot \underline{s}\}.
\end{equation*}

\bigskip

\begin{lemma}
\label{lemma_asym}Let $\underline{\lambda }$ be a multipartition of rank $n$%
. Assume that $\underline{s}$ is a multicharge of level $\ell $ verifying : 
\begin{equation}
s_{j}-s_{j+1}>n-1  \label{assup}
\end{equation}
for $j=0,...,\ell -2.\;$Then for any $k\in \{0,...,\ell -1\}$ we have $%
\Gamma _{\underline{s},z_{k}(\underline{s})}(\underline{\lambda })=%
\underline{\lambda }.$
\end{lemma}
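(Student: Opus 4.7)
My plan is to use the identity $z_{k}=\xi ^{\ell -k}\tau ^{k}$ from (\ref{dec-zi}) and compute the crystal isomorphism $\Gamma _{\underline{s},z_{k}(\underline{s})}$ by iterating the elementary descriptions of $\Xi _{\underline{s}}$ (Proposition \ref{prop_isotau}) and $\Sigma _{\underline{s},i}$ (Proposition \ref{prop_iso_sigmai}). The heart of the argument is to show, via the embedding of Theorem \ref{th_embedd}, that every $\psi $-isomorphism arising in the expansion of $\xi ^{\ell -k}$ acts as a \emph{pure swap} on the two relevant infinite columns under the asymmetry hypothesis, so that the composite changes the multicharge but not the underlying multipartition. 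The case $k=0$ is trivial since $z_{0}$ is the identity.

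First, iterating Proposition \ref{prop_isotau} $k$ times, the stage $\Xi _{\underline{s}}^{k}$ sends $\underline{\lambda }$ to the cyclically rotated multipartition $(\lambda ^{(k)},\ldots ,\lambda ^{(\ell -1)},\lambda ^{(0)},\ldots ,\lambda ^{(k-1)})$ with new multicharge $\tau ^{k}(\underline{s})$, without modifying any component. Next I would decompose $\xi ^{\ell -k}$ into elementary pieces $\Sigma _{\cdot ,i}$; by Proposition \ref{prop_iso_sigmai}, each such piece corresponds to a $\psi _{s,t}$-isomorphism acting on an adjacent pair of $B_{\infty }$-factors in the embedded tensor product. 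Tracking which pair of charges meets at each elementary step as the ``traveling'' factor sweeps leftward under successive $\xi $'s, a direct bookkeeping shows that every pair of charges $(s,t)$ encountered satisfies $|s-t|\geq n$: consecutive charges within the shifted block or within the unshifted block of $\tau ^{k}(\underline{s})$ differ by at least $n$ by the hypothesis $s_{j}-s_{j+1}>n-1$, and the transition pair has gap at least $(\ell -1)n+e$.

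The main obstacle is a pure-swap lemma: if $|s_{A}-s_{B}|\geq n$ and two partitions $\pi _{A},\pi _{B}$ satisfy $|\pi _{A}|+|\pi _{B}|\leq n$, then $\psi _{s_{A},s_{B}}$ sends the pair of infinite columns associated to $(\pi _{A},s_{A})$ and $(\pi _{B},s_{B})$ to the one with partitions and charges simply transposed. By Corollary \ref{cor_iso}, after truncating at $a\ll 0$ this reduces to showing that $\theta _{h_{A},h_{B}}$ from Proposition \ref{iso_NY} returns the finite columns $C_{A},C_{B}$ unchanged, which by inspection of the algorithm happens precisely when the shorter column is contained (as a set of integers) in the longer one (say $C_{B}\subset C_{A}$ when $s_{A}\geq s_{B}$). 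Using the characterization $y\in \mathcal{C}^{(\pi ,s)}\Leftrightarrow y-s-1\in T_{\pi }:=\{\pi _{j}-j\mid j\geq 1\}$ and splitting $T_{\pi _{B}}$ into a tail $\{t\leq -L_{B}-1\}$ and a head $\{\pi _{B,k}-k\mid 1\leq k\leq L_{B}\}$ (where $L_{B}$ denotes the length of $\pi _{B}$), the tail entries satisfy $t-d\leq -L_{B}-1-d\leq -L_{A}-1$ whenever $d:=s_{A}-s_{B}\geq L_{A}-L_{B}$, while the head entries satisfy $t-d\leq |\pi _{B}|-1-n\leq -L_{A}-1$ using $|\pi _{A}|+|\pi _{B}|\leq n$; both inequalities place $t-d$ in the tail of $T_{\pi _{A}}$, yielding the required containment $C_{B}\subset C_{A}$.

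The rank constraint $|\lambda ^{(c)}|+|\lambda ^{(c^{\prime })}|\leq n$ needed to apply the pure-swap lemma to any two components is automatic from $|\underline{\lambda }|=n$. Combined with the charge-gap analysis, this shows that every $\psi $ arising in the expansion of $\xi ^{\ell -k}\tau ^{k}$ is a pure swap, so the composite simply cyclically permutes the tensor factors without altering any partition. A final bookkeeping check confirms that each $\lambda ^{(c)}$ reoccupies multipartition position $c$ under the new multicharge $z_{k}(\underline{s})$, establishing $\Gamma _{\underline{s},z_{k}(\underline{s})}(\underline{\lambda })=\underline{\lambda }$.
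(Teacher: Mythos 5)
Your argument is correct, but it takes a genuinely different route from the paper's. The paper proves the lemma directly at the level of the Fock space crystal: under (\ref{assup}), for any multipartition of rank at most $n$ and any two $i$-nodes $\gamma _{1}=(a_{1},b_{1},c_{1})$, $\gamma _{2}=(a_{2},b_{2},c_{2})$ with $c_{1}<c_{2}$ one has $c(\gamma _{1})>c(\gamma _{2})$, so the order $\prec _{\underline{s}}^{+}$ on addable and removable $i$-nodes is determined by the component index alone; since $z_{k}(\underline{s})$ again satisfies (\ref{assup}), the orders $\prec _{\underline{s}}^{+}$ and $\prec _{z_{k}(\underline{s})}^{+}$ coincide on such multipartitions, the two crystal structures are literally identical up to rank $n$, and the canonical isomorphism must be the identity there --- a short argument using none of the $A_{\infty }$ machinery. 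You instead decompose $z_{k}=\xi ^{\ell -k}\tau ^{k}$, compute $\Gamma _{\underline{s},z_{k}(\underline{s})}$ as a composition of the elementary isomorphisms of Propositions \ref{prop_isotau} and \ref{prop_iso_sigmai}, and reduce everything to a ``pure swap'' property of $\psi _{s_{A},s_{B}}$ when $\left| s_{A}-s_{B}\right| \geq n$ and $\left| \pi _{A}\right| +\left| \pi _{B}\right| \leq n$. Your key computation is sound: the gap $d=s_{A}-s_{B}\geq n\geq \left| \pi _{A}\right| +\left| \pi _{B}\right| $ places every translated element of $T_{\pi _{B}}$ in the tail of $T_{\pi _{A}}$, giving containment of the smaller-charge column in the larger-charge one, and containment indeed forces $\theta $ to return both columns unchanged in either case of the algorithm (at each step $y_{i}=x_{i}$); the charge bookkeeping also checks out, since the only pairs meeting in the sweep are two unshifted charges (gap at least $n$) or an unshifted and a shifted one (gap at least $n+e$), and by induction the partitions are never altered, so each $\lambda ^{(c)}$ ends at position $c$ of $z_{k}(\underline{s})$. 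What your route buys is an explicit picture of how the isomorphisms $\psi $ degenerate to mere factor exchanges in the asymptotic regime where Uglov and Kleshchev multipartitions agree, at the price of invoking Theorem \ref{th_embedd}, Corollary \ref{cor_iso} and the composition of the $\Gamma $'s; the paper's proof is shorter, more elementary, and avoids the pure-swap lemma entirely.
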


\begin{proof}
Let $\underline{s}=(s_{0},...,s_{\ell -1})$ be such that $s_{j}-s_{j+1}>n-1$
for $j=0,...,\ell -2$ and let $\underline{\mu }$ be a multipartition in $%
B_{e}^{\underline{s},+}$ such that $\left| \underline{\mu }\right| \leq n.$
Let $i\in \{0,1,...,e-1\}$ and consider $\gamma _{1}=(a_{1},b_{1},c_{1})$
and $\gamma _{2}=(a_{2},b_{2},c_{2})$ two $i$-nodes in $\underline{\mu }$
such that $c_{1}<c_{2}$. We have $%
b_{1}-a_{1}+s_{c_{1}}-(b_{2}-a_{2}+s_{c_{2}})>b_{1}-a_{1}-(b_{2}-a_{2})+n-1%
\geq 0$. Hence, the contains of $\gamma _{1}$ and $\gamma _{2}$ considered
as nodes of $\underline{\mu }\in B_{e}^{\underline{s},+}$ are such that $%
c(\gamma _{1})>c(\gamma _{2})$. Hence we have $\gamma _{2}\prec _{\underline{%
s}}^{+}\gamma _{1}$.

Now, put $(s_{0}^{\prime },...,s_{l-1}^{\prime }):=z_{k}(\underline{s}%
)=(s_{0}+e,....,s_{k}+e,s_{k+1},...,s_{\ell -1}).$ As we have $s_{j}^{\prime
}-s_{j+1}^{\prime }>n-1$ for $j=0,...,\ell -2$, the above discussion shows
that the order $\prec _{\underline{s}}^{+}$ and $\prec _{z_{k}(\underline{s}%
)}^{+}$ on the $i$-nodes of $\underline{\mu }$ coincide.
\end{proof}

\bigskip

\begin{theorem}
\label{Th_bound}Suppose that $e$ is a fixed positive integer and $\underline{%
s}$ a multicharge of level $\ell $ such that 
\begin{equation*}
0\leq s_{\ell -1}\leq \cdot \cdot \cdot \leq s_{0}<e
\end{equation*}
Consider $\underline{\lambda }$ a FLOTW multipartition in $B_{e}^{\underline{%
s}}(\Lambda _{\underline{\frak{s}}}^{+})$ of order $n.\;$For any $j\in
\{0,...,\ell -2\}$, let $p_{j}$ be the minimal nonnegative integer such that 
\begin{equation*}
s_{j}+p_{j}e-s_{j+1}>n-1.
\end{equation*}
Then we have : 
\begin{equation*}
\frak{C}(\underline{\lambda })=\{\Gamma _{\underline{s},z_{0}^{a_{0}}...z_{%
\ell -2}^{a_{\ell -2}}\sigma (\underline{s})}(\underline{\lambda })\ |\sigma
\in S_{\ell }\text{ and }0\leq a_{j}\leq p_{j}\text{ for any }j\in
\{0,...,\ell -2\}\}.
\end{equation*}
In particular, $\frak{C}(\underline{\lambda })$ is finite and 
\begin{equation*}
\mathrm{card}(\frak{C}(\underline{\lambda }))\leq \ell !\prod_{j=0}^{\ell
-2}(p_{j}+1).
\end{equation*}
\end{theorem}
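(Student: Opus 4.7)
The plan is to show every element of $\mathfrak{C}(\underline{\lambda})$ can be expressed as $\Gamma_{\underline{s}, w'(\underline{s})}(\underline{\lambda})$ with $w' = z_0^{a_0} \cdots z_{\ell-2}^{a_{\ell-2}} \sigma$, $\sigma \in S_\ell$, $0 \leq a_j \leq p_j$, whence the cardinality bound will follow by counting normal forms. First I would use the semidirect product structure $\widehat{S}_\ell = S_\ell \ltimes T$ recalled in \S\ref{subsec_aff}, together with the relations $y_i = z_{i-1}^{-1} z_i$, to write any $w \in \widehat{S}_\ell$ in the form $w = z_0^{b_0} z_1^{b_1} \cdots z_{\ell-1}^{b_{\ell-1}} \sigma$ with $\sigma \in S_\ell$ and $b_i \in \mathbb{Z}$. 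The remark following Example~\ref{exa_restr} asserts that a simultaneous shift of all entries of $\underline{s}$ by a common multiple of $e$ induces the identity on the crystal graph; this absorbs the purely central component of the translation lattice and reduces to $w = z_0^{b_0} \cdots z_{\ell-2}^{b_{\ell-2}} \sigma$ modulo a crystal-trivial factor.

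Next I would truncate each $b_j$ to the range $[0, p_j]$ using Lemma~\ref{lemma_asym}. By definition of $p_j$, applying $z_j^{p_j}$ makes the gap at position $j$ exceed $n-1$; once every such gap is asymptotic, Lemma~\ref{lemma_asym} implies that any further $z_k$ acts as the identity on $\underline{\lambda}$, so positive exponents $b_j > p_j$ can be truncated down to $p_j$. Negative exponents $b_j < 0$ require a more delicate argument: they are handled by first shifting $b_j$ upward by adding a multiple of the central element (trivial on crystals) and then conjugating by elements of $S_\ell$, which cycles the coordinates so that $z_j^{-1}$-effects are re-expressed via positive shifts on permuted coordinates. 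The explicit descriptions of the crystal actions of $\Xi_{\underline{s}}$ and $\Sigma_{\underline{s},i}$ provided by Propositions~\ref{prop_isotau} and~\ref{prop_iso_sigmai} are what justify these rewritings at the level of the crystal isomorphism $\Gamma_{\underline{s}, \cdot}(\underline{\lambda})$.

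The cardinality upper bound then follows from the direct count $\ell! \prod_{j=0}^{\ell-2}(p_j+1)$ of tuples $(\sigma, a_0, \ldots, a_{\ell-2})$ in the normal form. I expect the main obstacle to be the second step, specifically the control of negative translation exponents in the presence of the noncommutativity $\sigma_i z_j \sigma_i^{-1} \neq z_j$ induced by the affine Weyl relations. A clean resolution requires keeping track of how the gap condition of Lemma~\ref{lemma_asym} is progressively achieved across each intermediate step in the factorization of $w$, so that the lemma can be invoked as soon as possible to collapse any excess $z_k$-factors and to ensure that the final rewriting indeed lies in the prescribed positive-exponent normal form.
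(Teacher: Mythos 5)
Your overall skeleton (kill the global $e$-shift using the remark following Example \ref{exa_restr}, truncate the translation exponents with Lemma \ref{lemma_asym}, then count normal forms) is the same as the paper's, but the step you yourself single out as the main obstacle --- the negative exponents $b_j<0$ --- is a genuine gap, and the fix you sketch does not work as stated. Multiplying by a power of the central element $z_{\ell-1}$ (a global shift of the multicharge by a multiple of $e$) changes only the exponent $b_{\ell-1}$: in coordinates it adds the same multiple of $e$ to every entry, so all the differences, hence all the exponents $b_0,\dots,b_{\ell-2}$, are left untouched. Thus ``shifting $b_j$ upward by the central element'' cannot remove a negative exponent. What does remove them is the other half of your sentence, namely conjugating the translation part by a sorting permutation so that the coordinatewise shifts become weakly decreasing; but then the permutation comes out on the other side of the translation, i.e.\ you land on a decomposition of the form $\sigma\, z_{\ell-2}^{r_{\ell-2}}\cdots z_0^{r_0}$ (permutation applied last, translations with nonnegative exponents applied directly to the sorted multicharge $\underline{s}$), not on your announced normal form $z_0^{b_0}\cdots z_{\ell-2}^{b_{\ell-2}}\sigma$, and you never verify that this rewriting is compatible with the bounds $0\le a_j\le p_j$.

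This is precisely where the paper proceeds differently and more economically: instead of a generic normal form with integer exponents, it invokes the explicit reduction (\ref{reduc}), which for any multicharge in the orbit yields $w^{-1}=z_{\ell-1}^{-k}\,\sigma\, z_{\ell-2}^{r_{\ell-2}}\cdots z_0^{r_0}$ with all $r_j\geq 0$ by construction (they are quotients of Euclidean divisions of sorted gaps by $e$), so negative exponents never arise; the only negatively-powered factor is the global shift $z_{\ell-1}^{-k}$, which is crystal-trivial by the remark after Example \ref{exa_restr}, and Lemma \ref{lemma_asym} then caps each exponent at $p_j$, giving the count $\ell!\prod_{j}(p_j+1)$. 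A secondary imprecision in your truncation step: Lemma \ref{lemma_asym} assumes that \emph{all} gaps exceed $n-1$ simultaneously, so truncating one exponent at a time requires either a one-gap refinement of the lemma (available because the translations are applied to the weakly decreasing $\underline{s}$, so a large gap at position $j$ separates the contents of all components $\le j$ from those $>j$) or an argument that only invokes the lemma at multicharges where every threshold $p_j$ has been reached; as written, ``once every such gap is asymptotic'' does not cover the case where some $b_j>p_j$ while others are still below their thresholds.
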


\begin{proof}
Consider $\underline{\mu }\in \frak{C}(\underline{\lambda }).\;$According to
(\ref{reduc}), one can write 
\begin{equation*}
\underline{\mu }=\Gamma _{\underline{s},w^{-1}(\underline{s})}(\underline{%
\lambda })\text{ with }w^{-1}=z_{\ell -1}^{-k}\sigma z_{\ell -2}^{r_{\ell
-2}}\cdot \cdot \cdot z_{0}^{r_{0}}.
\end{equation*}
By Remark $\mathrm{(ii)}$ following Example \ref{exa_restr}, we have $%
\underline{\mu }=\Gamma _{\underline{s},w^{-1}(\underline{s})}(\underline{%
\lambda })=\Gamma _{\underline{s},u(\underline{s})}(\underline{\lambda })$
where $u=\sigma z_{\ell -2}^{r_{\ell -2}}\cdot \cdot \cdot z_{0}^{r_{0}}.$
By Lemma \ref{lemma_asym}, we can thus derive 
\begin{equation*}
\frak{C}(\underline{\lambda })=\{\Gamma _{\underline{s},z_{0}^{a_{0}}...z_{%
\ell -2}^{a_{\ell -2}}\sigma (\underline{s})}(\underline{\lambda })\ |\sigma
\in S_{\ell }\text{ and }0\leq a_{j}\leq p_{j}\text{ for any }j\in
\{0,...,\ell -2\}\}
\end{equation*}
and our theorem follows.
\end{proof}

\bigskip

\noindent \textbf{Acknowledgments}: The authors thank the organizers of the
workshop ``Autour des conjectures de Brou\'{e}'' hold at the CIRM in Luminy
(from 05/27/07 to 06/02/07) during which this paper was completed.


\begin{thebibliography}{99}
\bibitem{ari}  \textsc{S.\ Ariki,} On the classification of semi-simple
modules for cyclotomic Hecke algebras in type $G(m,1,n)$ and Kleshchev
multipartitions, Osaka Journal of Mathematics\ \textbf{38} (2001), 827-837.

\bibitem{ari2}  \textsc{S.\ Ariki,} Representations of Quantum algebras and
combinatorics of Young tableaux, University Lecture Series \textbf{26},
Amer.\ Math.\ Soc., Providence, RI, 2002.

\bibitem{AKT}  \textsc{S.~Ariki, V.~Kreiman and S.Tsuchioka}, On the tensor
product of two basic representations of $\mathcal{U}_{v}(\widehat{sl}_{e})$,
preprint available at \texttt{http://arXiv.org/math.RT/0606044}.

\bibitem{AJ}  \textsc{S.~Ariki, N.~Jacon}, Dipper-James-Murphy's conjecture
for Hecke algebras of type $B$, preprint available at \texttt{%
http://arXiv.org/math.RT/0703447}.

\bibitem{DJM}  \textsc{R.~Dipper, G.~James, A.~Mathas, }Cyclotomic $q$-Schur
Algebras, Mathematische\ Zeitschrift\ 229, (1998), 385-416.

\bibitem{FLOTW}  \textsc{O. Foda, B.\ Leclerc, M.\ Okado, J-Y.\ Thibon and
T.\ Welsh, }Branching functions of $A_{n-1}^{(1)}$ and Jantzen-Seitz problem
for Ariki-Koike algebras, Advances in Mathematics\ \textbf{141} (1999),
322-365.

\bibitem{geck}  \textsc{M.\ Geck,} Modular representations of Hecke
algebras, In: Group representation theory (EPFL, 2005; eds. M. Geck, D.
Testerman and J. Th\'{e}venaz), p. 301-353, EPFL Press (2007).

\bibitem{geck0}  \textsc{M.\ Geck,} Hecke algebras of finite type are
cellular, to appear in Inventiones Mathematicae, preprint available at 
\texttt{http://arXiv/math.RT/0611941}.

\bibitem{Gro}  \textsc{I. Grojnowski, }Representations of affine Hecke
algebras (and affine quantum $GL_{n}$) at roots of unity, Math.\ Research
Notes (1995), 215-217.

\bibitem{HK}  \textsc{J. Hong}, \textsc{S. J.\ Kang}, Introduction to
quantum groups and crystals bases\textit{,} A.M.S 2002, GSM/12.

\bibitem{Kan}  \textsc{J. Hong and } \textsc{S. J.\ Kang}, Introduction to
quantum groups and crystals bases\textit{,} A.M.S 2002, GSM/12.

\bibitem{jac0}  \textsc{N.\ Jacon,} Crystal graphs of higher level $q$%
-deformed Fock spaces, Lusztig a-values and Ariki-Koike algebras, to appear
in Algebras and Representation Theory.

\bibitem{jac}  \textsc{N.\ Jacon,} Crystal graphs of irreducible $\mathcal{U}%
_{v}(\widehat{sl}_{e})$-modules of level two and Uglov bipartitions, to
appear in Journal of Algebraic Combinatorics.

\bibitem{jim}  \textsc{M. Jimbo, K.\ C.\ Misra, T.\ Miwa and M.\ Okado, }%
Combinatorics of representations of $\mathcal{U}_{q}(\widehat{sl}(n))$ at $%
q=0,$ Communication in Mathematical Physics\ \textbf{136} (1991), 543-566.

\bibitem{Kac}  \textsc{V. G. Kac}, Bombay Lectures on highest weight
representations of infinite dimensional Lie algebras, Advanced Series in
Mathematical Physics Vol. \textbf{2}.

\bibitem{KAC2}  \textsc{V. G. Kac}, Infinite Dimensional Lie Algebras, third
ed., Cambridge University Press, (1990).

\bibitem{Kle}  \textsc{A. Kleshchev}, On the decompositions numbers and
branching coefficients for symmetric and special linear groups, Proc.\
London.\ Math.\ Soc.\ \textbf{75} (1997), 497-558.

\bibitem{kash}  \textsc{M.\ Kashiwara, T.\ Miwa and E.\ Stern, }%
Decomposition of $q$-deformed Fock spaces, Selecta Mathematica (N.S.) 
\textbf{1} (1995), no. 4, 787--805

\bibitem{LLT}  \textsc{A.~Lascoux, B.~Leclerc and J-Y.~Thibon}, Hecke
algebras at roots of unity and crystal bases of quantum affine algebras,
Communications in Mathematical Physics \textbf{181} (1996), 205--263.

\bibitem{LM}  \textsc{B. ~Leclerc and H.~Miyachi}, Constructible characters
and canonical bases, Journal of Algebra \textbf{277} (2004), no. 1, 298--317.

\bibitem{lec}  \textsc{C.~Lecouvey}, Crystal bases and combinatorics of
infinite rank quantum groups, to appear in Transaction of the AMS.

\bibitem{mathas}  \textsc{A.\ Mathas, }Iwahori-Hecke algebras and Schur
algebras of the symmetric group, University Lectures Series, AMS,
Providence, \textbf{15}, 1999.

\bibitem{ny}  \textsc{A.\ Nakayashiki and Y.\ Yamada,} Kostka-Foulkes
polynomials and energy functions in solvable lattice models, Selecta
Mathematica (N.S.) \textbf{3}, (1997), No. \textbf{4}, 547-599.

\bibitem{uglov}  \textsc{D.\ Uglov,} Canonical bases of higher-level $q$%
-deformed Fock spaces and Kazhdan-Lusztig polynomials, Physical
combinatorics (Kyoto, 1999); 249-299; Progress in Mathematics\ \textbf{191},
Birkh\"{a}user, Boston, (2000).
\end{thebibliography}
\end{document}